\documentclass[11pt,letterpaper]{amsart}
\usepackage{amsmath,amssymb,mathrsfs, xcolor}
\usepackage{graphicx} 
\usepackage[title]{appendix}
\usepackage{comment}
\usepackage[normalem]{ulem}
 \usepackage{geometry}
 \usepackage{url}
\newcommand{\RN}[1]{%
  \textup{\uppercase\expandafter{\romannumeral#1}}%
}
\newcommand{\fillellipse}{\raisebox{0.2ex}{\scalebox{2.2}[1.4]{\textcolor{blue}{$\bullet$}}}}
\DeclareMathOperator{\Tr}{Tr}

\newtheorem{cor}{Corollary}[section]
\newtheorem{lem}{Lemma}[section]
\newtheorem{prop}{Proposition}[section]

\newtheorem{rem}{Remark}[section]

\newtheorem{problem}{Problem}[section]

\newtheorem{thm}[problem]{Theorem}

\title{Capillary minimal slicing and Scalar curvature rigidity}

\author{Dongyeong Ko and Xuan Yao}
\address{Department of Mathematics, Massachusetts Institute of Technology, Cambridge, MA 02139}
\email{dyko@mit.edu}
\address{Department of Mathematics, Princeton University, Princeton, NJ 08540}
\email{xy1216@princeton.edu}
\begin{document}

\begin{abstract}
    We develop minimal slicing via capillary hypersurfaces to understand positive scalar curvature metric on manifolds with boundary. The method provides rigidity statements once the regularity of minimizers of capillary area functional holds. In particular, in dimension $4$, we prove following comparison and rigidity statement: given a compact Riemannian $4$-manifold $(M^4,g)$ with a mean convex boundary whose boundary is diffeomorphic to boundary of a connected convex domain in $\mathbb R^4$, if the scalar curvature is non-negative and the scaled mean curvature comparison holds along the boundary, then $M$ is isometric to the Euclidean domain.
\end{abstract}

\maketitle

\section{Introduction}
Studying geometry and topology of manifolds with positive scalar curvature has been a fundamental topic in differential geometry, which appears both in geometry and general relativity naturally. Among the questions involving with scalar curvature including comparison theorems, positive mass theorem, topological classification, interactions between interior scalar curvature and boundary mean curvature have given clues on the local and global geometry of positive scalar curvature metric.

There have been several major approaches to understand manifolds with positive scalar curvature such as Dirac operator method, the variational method of area functional and harmonic functions. In particular, the variational approach with minimal hypersurfaces has played an important role in developing the theory, while many fundamental questions on manifolds with scalar curvature lower bound remains unknown. More specifically, many known methods to deduce the comparison and rigidity statement via variational methods rely on the stability property of a minimizer of area functional of hypersurfaces. The stability inequality gives comparison and rigidity of geometric and topological quantities. In particular, obtaining rigidity statements through variational methods have been successful in three dimensional geometry, where we have more direct ways to extract geometric and topological quantities using classical theorems such as Gauss-Bonnet. However, despite recent progress on the control of quantities in higher dimensions, for instance, $\sigma$-constant introduced by Schoen \cite{schoen2006variational} and Kobayashi \cite{kobayashi1986scalar} for Yamabe problem and the recent work of Stern \cite{stern2022scalar} by the level set of harmonic maps, only few methods can be employed to understand scalar curvature from minimizers. Moreover, almost all of these methods can be applied only in the manifold without boundary settings.

The Dirac operator method has been successful to prove major results when the manifold is spin, which is initiated by Lichnerowicz \cite{lichnerowicz1963geometrie}, Atiyah and Singer \cite{atiyah1963index} in 1960s. The theory was further developed by Hitchin \cite{hitchin1974harmonic}, and Gromov and Lawson \cite{gromov1980classification, gromov1983positive, gromov1980spin}. In the boundary setting, for the rigidity of Riemannian polyhedra, namely dihedral rigidity conjecture, Gromov proved the conjecture for cubical polyhedra by doubling and reducing to Geroch conjecture in high dimensions, and sketched a proof, which can be found in \cite{gromov2014dirac}, \cite{gromov2021lecturesscalarcurvature}, \cite{gromov2023convexpolytopesdihedralangles}. More recently, Wang-Xie-Yu \cite{wang2021gromov} developed spin geometry on cornered manifold to prove the dihedral rigidity conjecture. Brendle proved the dihedral rigidity conjecture with the matching angle hypothesis combining harmonic spinor and smooth-out method \cite{brendle2023matchingangle} (See also Brendle-Wang \cite{brendle2023gromovsrigiditytheorempolytopes}).

On the other hand, minimal hypersurface theory has led progress on major questions on scalar curvature and positive mass theorem. Remarkably, on a conjecture of Geroch, which conjectures that the torus $T^{n}$ does not admit a metric of positive scalar curvature, Schoen-Yau proved the conjecture in dimension $3$ first in \cite{schoen1979existence} and extended to dimension $4$ to $7$ by developing the descent on homology classes in \cite{schoen1987structure} (Gromov and Lawson proved the conjecture with the spinor method on spin manifolds in all dimensions in \cite{gromov1983positive}). Following an outline of proof in dimension $4$ by Schoen-Yau \cite{schoen1987structure}, the non-existence of metrics of positive scalar curvature on closed aspherical manifolds, also known as the $K(\pi,1)$ conjecture or aspherical conjecture, was proven in dimension $4$ and $5$ by Chodosh and Li in \cite{chodosh2024generalized} and by M. Gromov in \cite{gromov2020no} by generalized soap bubble method which is a generalized version of minimal hypersurfaces. The generalized soap bubble technique, also known as the $\mu$-bubble and prescribed mean curvature functional, was first introduced to the study of the positive scalar curvature problem by Gromov \cite{gromov2021lecturesscalarcurvature}, the existence and regularity (in low dimensions) of the $\mu$-bubble minimizer was first rigorously proven by Zhu \cite{zhu2021width}. See also \cite{zhou2020pmc} \cite{zhou2019cmc} for a min-max construction of the prescribed mean curvature functional.

The weighted minimal slicing method developed by Schoen and Yau in \cite{schoen2017positive} has enabled to reduce problems on higher dimensions to problems in dimension $2$, in particular in closed manifold settings, and this extended the positive mass theorem to all dimensions by analyzing singular minimal hypersurfaces. Recently, Brendle-Hirsch-Johne extended weighted minimal slicing techniques to prove the generalized Geroch conjecture using intermediate curvature \cite{brendle2024generalization} (See also \cite{chu2022rigidity} for the corresponding rigidity statement). Mazurowski-Wang-Yao \cite{mazurowski2026topology} combined the weighted minimal slicing and the generalized soap bubble method in the study of topological obstructions of positive intermediate curvature.

Analyzing capillary minimal surfaces to understand interaction between mean curvature and scalar curvature is natural, in the sense that capillary minimal surfaces prescribe the intersection angle between minimal surfaces and boundary of an ambient manifold. Remarkably, Li settled the dihedral rigidity conjecture for conical and prism-type polyhedra by applying capillary minimal surfaces in dimension $3$ \cite{li2020polyhedron} (See \cite{li2024dihedral} for the generalization to $n$-prisms).

For the rigidity results of smooth domains, Gromov proved the extremality for the standard unit ball with capillary minimal surfaces in his lecture note \cite{gromov2021lecturesscalarcurvature}. Chai-Wang \cite{CW1} generalized the rigidity theorem to $\mathbb{S}^{1}$-rotationally symmetric domains (See \cite{chai2024dihedral} in hyperbolic setting and \cite{chai2025scalar} in a $3$-dimensional warped product). Recently, Wu applied capillary surfaces with prescribed contact angle to obtain fill-in result and a bandwidth estimate in \cite{wu2025capillary} (See \cite{wu2025rigidity} also). Wang-Wang-Zhu proved scalar-mean curvature rigidity theorem for unit balls by applying capillary soap bubbles in \cite{wang2024scalar}.  The authors provided variational proof of comparison and rigidity of weakly convex domains with boundary diffeomorphism and the scaling mean curvature comparison introduced by Gromov, which we will discuss further, in dimension $3$ in \cite{koyao}, where Bär-Brendle-Chow-Hanke \cite{bar2023rigidity} proved the rigidity theorems for initial data sets via spinor approach which covers weakly convex domain cases.

Proving full regularity of a minimizer of capillary functional provides the comparison and rigidity statement from the slicing arguments. The classical regularity result of Taylor in \cite{taylor1977boundary} gives a smoothness of minimizing surfaces of the capillary functional in dimension $3$ (See De Philippis-Maggi \cite{philippis2015regularity} for more general settings). Recently, Chodosh-Edelen-Li extended the regularity result to dimension $4$ in \cite{chodosh2024improved} (See \cite{chodosh2025weiss} also). In contrast, for free boundary cases, it is proven that the singular set of a minimizer of the area functional has codimension at least $7$ by Grüter \cite{gruter1987optimal}. More recently, Firester-Tsiamis-Wang extended the regularity theory by providing non-flat area-minimizing capillary cones in dimension $8$ or higher in \cite{firester2026area}.

There are recent construction result of critical point of perturbed area functional such as prescribed mean curvature surface and capillary surface via min-max approach (See Zhou-Zhu \cite{zhou2018existence}, Li-Zhou-Zhu \cite{li2025min}, and Ko \cite{ko2023min}).

One natural setting to understand the scalar and mean curvature comparison and rigidity is on smooth convex domains. In this regards, Gromov posed and proved the following $\fillellipse$-inequality in his lecture note \cite{gromov2021lecturesscalarcurvature}:
\begin{thm} [\cite{gromov2021lecturesscalarcurvature}, Section 3.1.\RN{2}] \label{blueellipseineq}
    Let $V$ be a Riemannian manifold diffeomorphic to the $n$-ball the boundary of $V$ of which has positive mean curvature $H_{\partial V} >0$, let $\underline{V} \subset \mathbb{R}^{n}$ be a convex domain with smooth boundary and let $f: \partial V \rightarrow \partial \underline{V}$ is a diffeomorphism. If $R_{V} \ge 0$, then the differential of $f$ cannot be everywhere strictly smaller than the ratio of the mean curvatures of the two boundaries: there exists a point $v \in \partial V$, such that
    \begin{align} \label{mccomparisoncondition}
    ||df(v)|| \ge \frac{H_{\partial V}(f(v))}{H_{\partial \underline{V}}(v)}.
    \end{align}
\end{thm}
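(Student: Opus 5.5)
We argue by contradiction. Suppose $\|df(v)\| < H_{\partial V}(v)/H_{\partial\underline V}(f(v))$ at every $v\in\partial V$; the quotient in the statement is to be read this way, with each mean curvature evaluated at its own boundary point. By compactness of $\partial V$ the strict inequality holds with a uniform margin $\|df\| \le (1-\delta)\,H_{\partial V}/(H_{\partial \underline V}\circ f)$. We aim to slice $V$ by capillary minimizers down to dimension $2$ and contradict the Gauss--Bonnet theorem.

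\emph{Choice of slicing data.} Fix a coordinate direction $e_n$ in $\mathbb R^n$ and the hyperplanes $P_t=\{x_n=t\}$. These cut $\underline V$ in convex slices which meet $\partial\underline V$ at the angle $\theta(y)$ between the normal $\nu_{\partial\underline V}(y)$ and $e_n$. Pulling back by $f$ gives a prescribed angle function $\bar\theta=\theta\circ f$ on $\partial V$, and a homology class in $(V,\partial V)$ separating the pieces $f^{-1}(\{x_n>t\})$ and $f^{-1}(\{x_n<t\})$ of $\partial V$. We then minimize the capillary functional
\[
\mathcal A(\Omega)=|\partial\Omega\cap \mathrm{int}\,V| - \int_{\partial\Omega\cap\partial V}\cos\bar\theta ,
\]
with boundary data chosen so that a minimizer $\Sigma^{n-1}$ cannot escape through $\partial V$. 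This is possible because $H_{\partial V}>0$, which gives barriers at the boundary, and because $\cos\bar\theta$ takes values strictly inside $(-1,1)$ away from the two points where $\nu=\pm e_n$. Those points need separate treatment, either by choosing $t$ so the slice avoids them or by a perturbation argument.

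\emph{Stability and induction.} The second variation of $\mathcal A$ contains the interior term $-\int(|A|^2+\mathrm{Ric}(\nu,\nu))\phi^2$ and a boundary term involving $H_{\partial V}$, $A_{\partial V}$, the angle and the derivative of $\bar\theta$. Following Li's polyhedron argument and Gromov's treatment of the ball, we use the Schoen--Yau rewriting: the Gauss equation gives $R_\Sigma$, and the boundary relation expresses the geodesic curvature of $\partial\Sigma$ through $H_{\partial V}\sin\bar\theta$, the angle and $\nabla\bar\theta$. This yields a positive first eigenfunction $u$ of a stability operator. We pass to the warped product $\Sigma\times_u S^1$, or equivalently use $u$ as a weight, which gives a manifold with $R\ge 0$ in the weighted sense. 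Its boundary mean curvature dominates, with margin $\delta$, the scaled mean curvature of the corresponding convex slice $\partial(\underline V\cap P_t)$ under the restricted map. Here we use $\|df\|$ together with the Euclidean identity $H_{\partial(\underline V\cap P)} \ge$ (restricted second fundamental form) $/\sin\theta$. The angle-derivative terms should cancel exactly because in $\mathbb R^n$ the capillary slices are totally geodesic with constant angle along Euclidean boundary data.

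\emph{Final step and main obstacle.} Iterating this weighted capillary slicing (Schoen--Yau, Brendle--Hirsch--Johne style) reaches a $2$-dimensional disk $D$ with weighted $R\ge0$. Its boundary geodesic curvature is strictly larger than that of the comparison convex planar curve, whose total curvature is $2\pi$. Then Gauss--Bonnet gives $2\pi=\int_D K+\int_{\partial D}k_g>2\pi$, a contradiction. The main obstacle is the inductive step. The weighted boundary term must be shown to have the correct sign, absorbing the gradient of $u$ and of $\bar\theta$ via the strict inequality. In addition, the regularity of capillary minimizers is available only in low dimensions (Taylor, Chodosh--Edelen--Li), so the argument as planned is rigorous for $n\le 4$. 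In higher dimensions it needs a generic-regularity or singular-slicing substitute, or Gromov's Dirac/$\mu$-bubble alternative.
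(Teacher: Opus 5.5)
Your outline is the route the paper takes in dimension four: argue by contradiction, cut by capillary slices with angle functions pulled back from Euclidean hyperplane slices, use Schoen--Yau weighted stability, and finish with Gauss--Bonnet on a bottom disk. It does not prove the statement as given. The theorem is Gromov's and holds for every $n$; his proof uses the Dirac operator and the spin structure of the ball. The paper does not reprove it. It only recovers $n=4$ (without the ball assumption) as a corollary of Theorem~\ref{main rigidity thm}. Your plan stops at $n\le 4$, and regularity is not the only obstruction. As the paper explains, at the third slicing there is no known way to force a nontrivial minimizer. The barrier construction needs a point where $T\partial\Sigma_k$ and $T\partial\bar\Sigma_k$ coincide, and such a point can be produced only for $k\le 2$.

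Even for $n\le 4$, the steps that carry the argument are missing or would fail.

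\emph{Existence.} Since $H_{n-1}(V,\partial V;\mathbb Z)=0$, there is no class to minimize in. Requiring $\Omega\cap\partial V=f^{-1}\{x_n>t\}$ pins $\partial\Sigma$ to $f^{-1}(P_t)$ and destroys the capillary condition, along with the Robin term you need later. With a free boundary, nothing prevents a trivial minimizer; mean convexity of $\partial V$ does not. The paper gets nontriviality from local foliations near the extreme point $p_1$, and near a point $p_2\in\partial\Sigma_1$ chosen so that $T_{p_2}\partial\Sigma_1=T_{p_2}\partial\bar\Sigma_1$. The leaves have $H\ge 0$ and contact angle larger than $\bar\theta$, and this is where the comparison $H_g^2g\ge H_0^2g_E$ is first used (Propositions~\ref{posmc} and \ref{p2curvcomparison}).

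\emph{Comparison.} The hypothesis is not inherited by slices. $f(\partial\Sigma)$ is some submanifold of $\partial\underline V$, not $\partial(\underline V\cap P_t)$, so there is no ``corresponding convex slice''. At the bottom there is likewise no planar convex curve of total curvature $2\pi$ against which $k_g$ on $\partial D$ could be compared. Nor do the angle-derivative terms cancel. $\partial_{\nu_i}\bar\theta_i$ differentiates across Euclidean slices, so it is nonzero already for the round ball. Moreover, the Riemannian conormal $\nu_i$ is not $\bar\nu_i$.

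The paper instead carries the original Euclidean data down to the bottom slice, in four steps:
\begin{enumerate}
\item Telescoping with the Robin conditions of the $f_k$ and the relation $\eta_{k-1}=\sin\bar\theta_k\eta_k+\cos\bar\theta_kN_k$ gives the boundary term of Proposition~\ref{prop: n-th slicing}.
\item The angle identities (Proposition~\ref{prop: Angle Identities}) turn each $\partial_{\nu_i}\bar\theta_i$ into $\RN{2}(\nu_i,\bar\nu_i)$.
\item The mixed-frame trace inequality $H_{\partial M}\ge\sum_i\RN{2}(\tau_i,\bar\tau_i)$ (Proposition~\ref{prop:local estimates}, where $\|df\|\le H/\bar H$ enters) leaves $\int_{\partial\Sigma_n}\RN{2}(\tau_1,\bar\tau_1)/\prod_i\sin\bar\theta_i$.
\item This integrand is $d\arctan(y_{u_1}/x_{u_1})$, so the integral is a winding number $2\pi k$ with $k\ge 1$.
\end{enumerate}
With your strict hypothesis this yields $2\pi\chi(\Sigma_n)>2\pi$, a contradiction. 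The barrier foliation and the angle-identity/trace/winding-number identity are exactly what your ``main obstacle'' leaves open.
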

The scaling invariant mean curvature condition (\ref{mccomparisoncondition}) is introduced in Gromov's lecture note with the terminology of `normalization of metrics by mean curvatures'. The statement suggests the sharp comparison between interior scalar curvature and boundary mean curvature of convex domains, and the proof relies on the spin structure and Dirac operator.

\subsection{Main results}
We now describe our capillary minimal slicing which provides applications in comparison and rigidity statements, in particular, in non-spin settings. We adopt Schoen-Yau's weighted minimal slicing technique to manifold with boundary. In the slicing $X \subset X \times S^{1} \subset ... \subset X \times T^{n-k}$ of a closed manifold $X^{k} \times T^{n-k}$, the existence of nontrivial minimizer is guaranteed by the nontrivial topology of the torus part $T^{n-k}$. However, in our manifold with boundary setting, we lack the topological information of manifold since we only know that $\partial M$ is diffeomorphic to an $(n-1)$-dimensional sphere. For this reason, we impose the contact angle functions for each minimal slicing which replace topological constraints in the minimal slicing of closed manifolds. We deduce the existence of nontrivial minimizers of capillary area functional from the mean curvature comparison. 

Let us first define the slicing angle functions. Suppose $\bar{M}^{n+2}$ to be a Euclidean convex domain with smooth boundary and we take orthogonal coordinates $(x_{1}, ..., x_{n+2})$.  Denote $\bar{\Sigma}_{t_{1}, ..., t_{i}}^{n+2-i}$ to be a codimension $i$ slice $\bar{M} \cap( \cap_{j=1}^{i} \{ x_{n+3-j} = t_{j} \})$ and assume $x \in \bar{\Sigma}_{t_{1}, ..., t_{i}}$. We define the angle functions $\{\bar{\theta}_{i}\}_{i=1, ..., n}$ at $x$ by the dihedral angle $\bar{\theta_{i}}$ between $\bar{\Sigma}_{t_{1}}$ and $\partial \bar{M}$ for $i=1$ and the dihedral angle between $\bar{\Sigma}_{t_{1}, ..., t_{i}}$ and $\partial \bar{\Sigma}_{t_{1}, ..., t_{i-1}}$ otherwise, namely the dihedral angle between codimension $i$ level sets and the boundary of codimension $i-1$ level sets in a Euclidean domain. We call the angle functions $\{\bar{\theta}_{i}\}_{i=1,2 ..., n}$ are associated angle functions with the coordinate $(x_{1}, ..., x_{n+2})$. Given angle functions $\{\bar{\theta}_{i}\}_{i=1, ..., n}$ on a Euclidean domain $\partial \bar{M}$, the angle function on $\partial M$ is given by the pullback of the angle functions by boundary diffeomorphism i.e. $\{ f^{\sharp} \bar{\theta}_{i}\}_{i=1, ..., n}$ are angle functions on $\partial M$ where $f : \partial M \rightarrow \partial \bar{M}$ is a boundary diffeomorphism.

In dimension $4$, our main slicing statement is as follows. 

\begin{thm}\label{main slicing thm}
    Let $M^{4}$ be a manifold with boundary of dimension $4$ with non-negative scalar curvature $R \ge 0$, whose boundary is diffeomorphic to the boundary of a Euclidean convex domain $\bar{M}^{4}$. Moreover, suppose that $\partial M$ satisfies the rescaled mean curvature comparison $H_g^2g|_{\partial M}\geq H_0^2g_{E}|_{\partial \bar{M}}$. Then there exist orthogonal coordinates $(x_{1}, ..., x_{4})$ such that there exists a stable weighted capillary minimal slicing $\Sigma_{2}^{2} \subset \Sigma_{1}^{3} \subset M^{4}$ with the associated angle functions $\{\bar{\theta}_{i}\}_{i=1,2}$ satisfying the following conditions.
    \begin{enumerate}
        \item For each $k=1,2$, $\Sigma_{k}$ is an embedded capillary minimal hypersurface in $\Sigma_{k-1}$ with a contact angle $\bar{\theta}_{k}$. Moreover, $\Sigma_{k}$ is a stable critical point of the $\rho_{k-1}$-weighted capillary area functional
        \[
            \mathcal A_{\bar{\theta}_k}(U_k):=\int_{\partial_{rel}U_k}\rho_{k-1}d \mathcal{H}^{4-k}-\int_{\partial U_n\cap \partial \Sigma_{k-1}}\rho_{k-1} \cos\bar{\theta}_k d \mathcal{H}^{4-k},
        \]
        where $g_{0}=1$ and $\Sigma_{0} = M$.
        \item For each $k=1,2$, the function $f_k|_{\Sigma_{k}} \in C^{\infty}(\Sigma_{k})$ is a first eigenfunction of the stability operator associated with the $\rho_{k-1}=\Pi_{i=1}^{k-1}f_i$ weighted capillary area functional.
    \end{enumerate}
\end{thm}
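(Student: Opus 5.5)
The plan is to build the slicing inductively, one codimension at a time. At each step we minimize a weighted capillary functional in the current slice, with the contact angle given by the pulled-back Euclidean angle function. The mean curvature comparison is what keeps the minimizer from degenerating.

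\textbf{Step 1: choice of coordinates and angle functions.} We first choose orthogonal coordinates $(x_1,\dots,x_4)$ on $\mathbb R^4$ generically, so that the level sets $\{x_4=t_1\}$ and $\{x_3=t_2\}$ meet $\partial\bar M$ transversally for the relevant values. This makes the associated angle functions $\bar\theta_1,\bar\theta_2$ smooth in the interior of the slices. Their pullbacks $f^\sharp\bar\theta_k$ then define the contact angle data on $\partial M$. On the Euclidean side, each slice $\bar\Sigma_{t_1,\dots,t_k}$ is a competitor with the prescribed angle. We also need the identity
\[
H_{\partial\bar\Sigma_{k-1}}=\frac{\kappa}{\sin\bar\theta_k}
\]
relating $H_{\partial\bar\Sigma_{k-1}}$ to the geodesic curvatures of the boundary curves. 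Together with the comparison $H_g^2 g|_{\partial M}\ge H_0^2 g_E|_{\partial\bar M}$, this identity gives the barrier needed below.

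\textbf{Step 2: existence of minimizers.} For $k=1$ we minimize $\mathcal A_{\bar\theta_1}$ over Caccioppoli sets $U_1\subset M$ whose boundary traces on $\partial M$ separate the two pieces of $\partial M$ cut out by $f^{-1}(\{x_4=t_1\})$. The mean convexity $H_g>0$ of $\partial M$, combined with the rescaled comparison, gives a strict inequality for the capillary functional along the boundary. This inequality keeps minimizing sequences from sliding off into $\partial M$ or collapsing, exactly as in the three-dimensional argument of \cite{koyao}. The same inequality forces the minimizer to be nontrivial. Regularity in ambient dimension $4$ (hypersurface dimension $3$) follows from Chodosh--Edelen--Li \cite{chodosh2024improved}, so $\Sigma_1$ is a smooth embedded capillary hypersurface meeting $\partial M$ at angle $\bar\theta_1$, and it is stable.

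We then take $f_1>0$ to be a first eigenfunction of the stability (Jacobi) operator with the capillary Robin boundary condition. We set $\rho_1=f_1$ and repeat the minimization in $\Sigma_1$ for the $\rho_1$-weighted functional with angle $\bar\theta_2$. The minimizer is now a $2$-dimensional surface in a $3$-manifold, so Taylor's regularity \cite{taylor1977boundary}, in its weighted form \cite{philippis2015regularity}, gives smoothness. Its first eigenfunction is $f_2$.

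\textbf{Step 3: nondegeneracy via the comparison, the main obstacle.} The hardest part is showing that the minimization at each level has a nontrivial minimizer, i.e.\ that the infimum is not achieved by an empty or boundary-hugging set. In the weighted second step this requires controlling the boundary term. We need the geometry of $\partial\Sigma_1\subset\partial M$ inherited through the Robin condition to satisfy a comparison analogous to the one on $\partial M$.

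Our first attempt will be to transport the scaled comparison down one dimension. We use the capillary boundary identity, the boundary condition $\partial_\nu f_1=q f_1$, and the Euclidean relation between $H_{\partial\bar M}$, $\bar\theta_1$ and the curvature of $\partial\bar\Sigma_{t_1}$. The goal is a weighted inequality $\rho_1 H_{\partial\Sigma_1}\ge(\text{Euclidean model})$. This inequality then serves as the barrier in $\Sigma_1$.

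If this transport fails pointwise, we will instead vary $t_1,t_2$ (a foliation/sweepout argument). We pick the parameter at which the model slice is extremal, so that the comparison holds in integrated form. This is the reason the theorem only asserts the existence of suitable coordinates.
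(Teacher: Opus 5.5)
\textbf{There is a genuine gap.} Your overall structure matches the paper: minimize slice by slice, use the first eigenfunction $f_1$ as the weight, and get regularity from Chodosh--Edelen--Li (and Taylor). What is missing is any mechanism for nontriviality of the minimizers, and the mechanisms you sketch would fail.

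\textbf{First slice.} In the paper, nontriviality comes from local barrier foliations placed exactly where the angle functions degenerate. You instead choose coordinates so as to avoid those points.
\begin{itemize}
\item The paper takes $p_1\in\partial M$ where $x_4$ is extremal, so $\bar\theta_1(p_1)=0$, and writes $\partial M$ near $p_1$ as a graph with quadratic coefficients $C$.
\item It uses graphical leaves $\Sigma_{s,t}$ with quadratic part $t^2c_{ij}(b_{ij}(1+s)-1)x_ix_j$. Here $b_{ij}=V_{ij}/c_{ij}$, $V=D^{-1/2}SD^{-1/2}$ and $S=(D^{1/2}C^TCD^{1/2})^{1/2}$.
\item This choice makes every leaf's contact angle exceed $\bar\theta_1$, from the $-4st^2$ term in $\cos\theta^g_{s,t}$.
\item It also makes the limiting mean curvature $H^g_{\partial M}-H^{E}_{\partial M}+2\Tr(C)-2\sqrt{a^{44}}\Tr(S)$ nonnegative, and this is precisely where the rescaled comparison enters.
\item The caps cut off by these leaves keep minimizers over sets containing $p_1$ from collapsing.
\end{itemize}
Your claimed ``strict inequality for the capillary functional'' cannot hold in general. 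Take $M=\bar M$, which satisfies all the hypotheses. Then every region $\{x_4>t\}\cap\bar M$ has $\mathcal A_{\bar\theta_1}=0$, the same value as caps shrinking to $p_1$. This is why the paper handles separately the case where $g(p_1)|_{\partial M}$ is a multiple of the identity, using a nonnegative CMC foliation (Ko--Yao).

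Your separation constraint on boundary traces also causes trouble either way. If it is active, the minimizer is not a free critical point, so the angle condition, stability and the Robin eigenfunction in (1)--(2) do not follow. If it is merely homological, it does not prevent collapse onto $p_1^\pm$, which is exactly what the barrier is for.

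\textbf{Second slice.} The obstruction you anticipate is real, but the fix must be pointwise. The local estimate $H_{\partial M}\ge\sum_i\RN{2}(\tau_i,\bar\tau_i)$ transfers to $\partial\Sigma_1$ only where $T_p\partial\Sigma_1=T_p\partial\bar\Sigma_1$. Elsewhere $\nu\neq\bar\nu$, and neither a mean curvature comparison nor a contact angle comparison is available.

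The paper picks $p_2$ as an extremal point of $\partial\Sigma_1$ and rotates the first three axes so that these tangent planes agree at $p_2$. This is the source of ``there exist orthogonal coordinates'' in the statement, not genericity or transversality. At $p_2$ it combines $\tilde H_{\partial\Sigma_1}(p_2)=\frac{f_1^{-1/2}}{\sin\bar\theta_1}(H_{\partial M}-\partial_\nu\bar\theta_1)$, computed in the metric $f_1g|_{\Sigma_1}$, with a von Neumann trace bound. This yields a foliation of $\Sigma_1$ near $p_2$ with nonnegative weighted mean curvature and contact angle above $\bar\theta_2$, and the same barrier argument then runs.

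Your fallback of varying $t_1,t_2$ and using an integrated comparison cannot replace this, for two reasons. A barrier needs pointwise sign conditions on a local foliation. Also, $\bar\theta_1$ and $\bar\theta_2$ depend only on the point of $\partial\bar M$, so there is no family of functionals indexed by $t_1,t_2$ to sweep through.
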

\begin{rem} 
\begin{enumerate}
    \item The proof of Theorem \ref{main slicing thm} relies on the smoothness of minimizers of the capillary area functional and the curvature estimates which come from the Bernstein problem of capillary area minimizing cone. The best known regularity is the full regularity in ambient dimension $4$, namely by the work of Chodosh-Edelen-Li \cite{chodosh2024improved}. 
    \item In this paper, we prove the existence of weighted capillary minimal slicing in dimension $4$. For the existence in higher dimensions, we may need to impose further assumptions on the Euclidean domain $\bar{M}$.
    \end{enumerate}

\end{rem}
By analyzing the stable minimal capillary slices constructed in Theorem \ref{main slicing thm}, we obtain the following rigidity theorem. Since the estimates we proven in Section 2 and 3 works dimension-wise, we state the theorem in general dimensions. In the following statement, we assume the regularity of a minimizer of capillary area functional, which follows the curvature estimate. 

\begin{thm} \label{main rigidity in general dimension}
    Let $M^{n+2}$ be a manifold with boundary of dimension $n+2$ with non-negative scalar curvature $R \ge 0$, whose boundary is diffeomorphic to the boundary of a Euclidean convex domain $\bar{M}^{n+2}$. Moreover, suppose that $\partial M$ satisfies the rescaled mean curvature comparison $H_g^2g|_{\partial M}\geq H_0^2g_{E}|_{\partial \bar{M}}$. Assume the regularity of minimizer of capillary area functional and there exists a stable weighted capillary minimal slicing  $\Sigma_{n}^{2} \subset \Sigma_{n-1}^{3} \subset ... \subset M^{n+2}$ with the associated angle functions $\{\bar{\theta}_{i}\}_{i=1,...n}$. Then $(M,g)$ is isometric to $(\bar{M}, g_{E})$.
\end{thm}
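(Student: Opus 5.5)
The plan is to run the Schoen--Yau weighted slicing inequality down the chain $\Sigma_n^2\subset\cdots\subset\Sigma_1^{n+1}\subset M^{n+2}$, keeping track of the boundary terms coming from the capillary condition. This reduces everything to a Gauss--Bonnet inequality on the bottom surface $\Sigma_n^2$. Equality in that inequality then forces all the rigidity.

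\textbf{Stability inequalities.} For each $k$, the stability of $\Sigma_k$ for the $\rho_{k-1}$-weighted capillary functional, with $f_k$ its first eigenfunction ($\lambda_k\ge0$), gives the usual weighted stability inequality plus a boundary term of the form
\[
-\int_{\partial\Sigma_k}\rho_{k-1}\Bigl(\tfrac{1}{\sin\bar\theta_k}\,\overline{h}(\bar\nu,\bar\nu)+\cot\bar\theta_k\, A_k(\nu,\nu)\Bigr)\phi^2 ,
\]
together with terms from the derivative of $\bar\theta_k$ along $\partial\Sigma_{k-1}$, since the angle is variable. Here $\overline h$ is the second fundamental form of $\partial\Sigma_{k-1}$ in $\Sigma_{k-1}$.

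\textbf{Telescoping.} Following Schoen--Yau and Brendle--Hirsch--Johne, I would plug $\phi=1$ at the final stage. I would then use the traced Gauss equations $R_{\Sigma_{k-1}}=R_{\Sigma_k}+2\mathrm{Ric}(\nu,\nu)+|A_k|^2$ together with the eigenfunction equations for the $f_k$. The cross terms $\langle\nabla\log f_i,\nabla\log f_j\rangle$ are absorbed by the standard algebraic inequality. The outcome should be
\[
0\le\int_{\Sigma_n}\rho_{n-1}\Bigl(K_{\Sigma_n}-\tfrac12R_M-\textstyle\sum|A_k|^2-\cdots\Bigr)+\int_{\partial\Sigma_n}\rho_{n-1}\bigl(\kappa_{\partial\Sigma_n}-B\bigr),
\]
where $B$ collects the boundary contributions of the angles. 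Because $\rho_{n-1}$ is a weight, I would rather divide out using $\log\rho$ (the warped-product trick) so that Gauss--Bonnet applies to $\int K+\int\kappa=2\pi\chi(\Sigma_n)\le2\pi$, the surface being a disk.

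\textbf{Boundary comparison (main obstacle).} The key claim is that along $\partial\Sigma_n$ the accumulated boundary term $\kappa_{\partial\Sigma_n}$, expressed through $H_{\partial M}$ and the angles $\bar\theta_1,\dots,\bar\theta_n$ via a nested Gauss-type decomposition, dominates the corresponding Euclidean quantity. The Euclidean quantity is the geodesic curvature of the boundary of the flat 2-slice of $\bar M$, and in the flat model it integrates to exactly $2\pi$. I would try to use $H_g^2g\ge H_0^2g_E$ pointwise, comparing the length element of $\partial\Sigma_n$ in $g$ and in $f^*g_E$, so that
\[
\int_{\partial\Sigma_n}H_g\,(\text{angle factor})\,ds_g\ \ge\ \int H_0\,(\text{same factor})\,ds_E .
\]
The angle factors agree by construction, since they are pulled back from $\bar M$. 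The difficulty is that $\partial\Sigma_n$ is not the preimage of a Euclidean slice boundary, and that the angle-derivative terms must cancel exactly. I expect to need a pointwise Cauchy--Schwarz inequality between the $n$ tangential directions and the normal directions, in the spirit of the 3-dimensional argument in \cite{koyao}.

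\textbf{Equality and rigidity.} Combining the steps gives $2\pi\ge\int\kappa\ge2\pi$, so equality holds throughout. This forces $R=0$ along the slices, $A_k=0$, each $f_k$ constant, $\mathrm{Ric}(\nu,\nu)=0$, $K_{\Sigma_n}=0$, and equality $H_g^2g=H_0^2g_E$ on $\partial\Sigma_n$. Next I would vary the slice parameters $t_j$ through the regularity assumption: the minimizers exist for all nearby levels and form foliations by totally geodesic flat leaves. This yields parallel coordinate fields, hence a flat metric on $M$, and $M$ is isometric to a convex Euclidean domain with the same boundary data, hence to $\bar M$. The foliation step needs the usual argument comparing the constructed leaf with neighboring minimizers, as in the rigidity proofs of \cite{chu2022rigidity}.
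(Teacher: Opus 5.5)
Your frame (weighted stability down the chain, then Gauss--Bonnet on the disk $\Sigma_n$, then the equality case) matches the paper. The gap is the step you call the main obstacle: that is where the proof actually lives, and the route you sketch would not close it.

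\textbf{The boundary term.} The scalar comparison $H_g\,ds_g\ge H_0\,ds_E$ throws away the directional information needed for the terms $\partial_{\nu_i}\bar\theta_i$. These are derivatives along the $g$-conormals $\nu_i$ of the Riemannian slices, and they do not cancel. The paper handles them in three steps.
\begin{enumerate}
\item It rewrites them with the angle identity $\prod_{j<i}\sin\bar\theta_j\,\partial_e\bar\theta_i=\RN{2}(e,\bar\nu_i)$. This holds for every $e\in T\partial M$, where $\RN{2}$ is the Euclidean second fundamental form of $\partial\bar M$.
\item It absorbs them with the trace estimate $H_{\partial M}\ge\sum_i\RN{2}(\tau_i,\bar\tau_i)$. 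Here $(\tau_1,\nu_1,\dots,\nu_n)$ is a $g$-orthonormal frame with $\tau_1$ the unit tangent of $\partial\Sigma_n$, and $(\bar\tau_1,\bar\nu_1,\dots,\bar\nu_n)$ is the Euclidean slice frame. This trace inequality, obtained by diagonalizing $\RN{2}\ge 0$, is what $H_g^2g\ge H_0^2g_E$ really provides.
\item The leftover term $\int_{\partial\Sigma_n}\RN{2}(\tau_1,\bar\tau_1)\,dl_g/\prod_i\sin\bar\theta_i$ is not compared with any Euclidean slice. Write $\partial\bar M$ as $(x(u),y(u),u_2,\dots,u_{n+1})$ and compute $\prod_i\sin\bar\theta_i$ explicitly in this parametrization. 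Then the integrand is exactly $d\arctan(y_{u_1}/x_{u_1})$ along the curve, so the integral is $2\pi k$ with $k\ge 1$, whatever closed curve $\partial\Sigma_n$ is.
\end{enumerate}
That winding-number identity is the missing idea. It is why it does not matter that $\partial\Sigma_n$ is not the preimage of a Euclidean slice boundary. By contrast, $\int H_0(\cdots)\,ds_E$ over a non-slice curve has no reason to equal $2\pi$. Also, to remove the weight you need the test function $\varphi=\rho_n^{-1/2}$ in the last stability inequality, not $\varphi=1$.

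\textbf{The rigidity step.} Your global argument also has no working mechanism. There are no slice parameters to vary on the Riemannian side: each $\mathcal A_{\bar\theta_k}$ has a fixed angle function and is minimized once. So ``minimizers exist for all nearby levels'' is not available.

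The paper first reads off infinitesimal rigidity from the equality case: $A=0$, $\mathrm{Ric}(N,N)=0$, $f_k$ constant, aligned frames, and $\kappa_{\partial\Gamma}=\frac{H_{\partial M}}{\bar H_{\partial M}}\kappa_{\partial\bar\Gamma}$. It then manufactures nearby minimizers by the Carlotto--Chodosh--Eichmair/Liu device.
\begin{enumerate}
\item Conformally shrink $g$ on an annulus $B_{3r_0}(p)\setminus B_{r_0}(p)$ on one side of $\Sigma$, so that $R>0$ there and $\Sigma$ becomes a mean-convex barrier.
\item Minimize $\mathcal A_{\bar\theta}$ in the new metric. Infinitesimal rigidity forbids $R>0$ on a minimizer, which forces the new minimizer through $B_{r_0}(p)$.
\item The curvature estimate from the assumed regularity lets the deformation tend to zero. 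This gives a dense family of infinitesimally rigid minimizers on both sides of $\Sigma$.
\item Normalized graph functions converge to a Jacobi field $f$ with $\nabla^2_\Sigma f+\mathrm{Rm}(N,\cdot,\cdot,N)=0$, $\Delta_\Sigma f=0$ and $\partial_\eta f=0$. Hence $f$ is constant and $\mathrm{Rm}=0$ near $\Sigma$.
\end{enumerate}
This is run first for $\Gamma\subset\Sigma_1$ and then for $\Sigma_1\subset M$.

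A Bray--Brendle--Neves style implicit-function foliation could replace this device. However, with a variable contact angle and a Robin boundary operator you would have to construct that foliation and show each leaf is again minimizing. Pointing to closed-manifold rigidity arguments does not supply that.
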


By combining Theorem \ref{main slicing thm} and \ref{main rigidity in general dimension} and applying this to dimension $4$ cases, we obtain our main rigidity theorem in dimension $4$.
\begin{thm}\label{main rigidity thm}
    Suppose $(M^4,g)$ is a compact Riemannian manifold with non-negative scalar curvature $R \ge 0$ and assume that $(\partial M,g|_{\partial M})$ is mean convex and diffeomorphic to the boundary $\partial \bar{M}$ of a smooth convex domain $(\bar{M},g_E)$ in $\mathbb R^4$. Furthermore, we assume that the rescaled mean curvature comparison $H_g^2g|_{\partial M}\geq H_0^2g_{E}|_{\partial \bar{M}}$ is satisfied, then $(M,g)$ is isometric to $(\bar{M},g_E)$.
\end{thm}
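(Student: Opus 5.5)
The plan is to deduce Theorem \ref{main rigidity thm} by combining Theorem \ref{main slicing thm} with Theorem \ref{main rigidity in general dimension} for $n=2$.

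\emph{Step 1: check the hypotheses of Theorem \ref{main slicing thm}.} $(M^4,g)$ is compact with $R\ge 0$, and $\partial M$ is diffeomorphic to $\partial\bar M$ with $\bar M\subset\mathbb R^4$ smooth and convex. Theorem \ref{main slicing thm} then gives orthogonal coordinates $(x_1,\dots,x_4)$ on $\mathbb R^4$ and a stable weighted capillary minimal slicing $\Sigma_2^2\subset\Sigma_1^3\subset M^4$. The contact angles are the associated angle functions $\bar\theta_1,\bar\theta_2$, pulled back to $\partial M$ by the boundary diffeomorphism $f$. The weights are $\rho_0=1$ and $\rho_1=f_1$, where $f_k$ is a positive first eigenfunction of the stability operator.

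Mean convexity of $\partial M$ enters at two points:
\begin{itemize}
\item it makes $H_g>0$, so the rescaled comparison $H_g^2 g|_{\partial M}\ge H_0^2 g_E|_{\partial\bar M}$ is the genuine scale-invariant condition (\ref{mccomparisoncondition}) with nondegenerate rescaled metrics;
\item it acts as a barrier, preventing capillary minimizers from degenerating onto $\partial M$. Together with the comparison, this is what yields nontrivial minimizers in Theorem \ref{main slicing thm}.
\end{itemize}
I take both facts to be built into that theorem. In ambient dimension $4$, its regularity input is the full regularity of capillary minimizers of Chodosh--Edelen--Li \cite{chodosh2024improved}, so the slices are smooth embedded hypersurfaces.

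\emph{Step 2: apply Theorem \ref{main rigidity in general dimension} with $n=2$.} Its assumptions are:
\begin{itemize}
\item dimension $n+2=4$, $R\ge 0$, and $\partial M$ diffeomorphic to the convex Euclidean boundary $\partial\bar M$;
\item the rescaled mean curvature comparison;
\item regularity of minimizers of the capillary area functional, which holds in dimension $4$ by \cite{chodosh2024improved};
\item existence of a stable weighted capillary minimal slicing $\Sigma_2^2\subset\Sigma_1^3\subset M^4$ with associated angle functions $\{\bar\theta_i\}_{i=1,2}$, which is exactly the output of Step 1.
\end{itemize}
The theorem then gives that $(M,g)$ is isometric to $(\bar M,g_E)$.

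\emph{Main obstacle.} The composition itself is formal. The substantive check is that the slicing produced by Theorem \ref{main slicing thm} has precisely the structure used in the proof of Theorem \ref{main rigidity in general dimension}. This means the same coordinate-adapted angle functions $\bar\theta_i$, the same weights $\rho_{k-1}=\prod_{i<k}f_i$, and the same notion of stability. One must also confirm that the proof of Theorem \ref{main rigidity in general dimension} needs nothing beyond the bottom slice $\Sigma_2^2$: there one runs the Gauss--Bonnet and boundary-angle comparison argument. If it instead requires rigidity for every slice in a foliation, for example through varying the level $t$ or the coordinates, I would verify that Theorem \ref{main slicing thm} supplies such slicings through every point. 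I would do this by rerunning the existence argument for each choice of levels $(t_1,t_2)$, which the same barrier and comparison argument permits.
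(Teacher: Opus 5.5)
Your proposal is correct and follows the paper's route: the paper obtains Theorem \ref{main rigidity thm} by feeding the slicing $\Sigma_2^2\subset\Sigma_1^3\subset M^4$ from Theorem \ref{main slicing thm} (smooth by Chodosh--Edelen--Li \cite{chodosh2024improved}) into Theorem \ref{main rigidity in general dimension} with $n=2$, whose proof is the final-section argument (bottom-slice Gauss--Bonnet/winding-number rigidity, then local splitting). Your contingency of rerunning the existence argument at prescribed levels $(t_1,t_2)$ is unnecessary and would not work as stated: the capillary functionals have no level parameter, and the barrier argument at the extreme points only guarantees a nontrivial minimizer somewhere; the dense family of rigid slices is instead produced inside that proof by conformally deforming $g$ near $\Sigma$ and re-minimizing, following Carlotto--Chodosh--Eichmair \cite{carlotto2016effective}.
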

As a direct corollary of Theorem \ref{main rigidity thm}, we obtain the generalization of Theorem \ref{blueellipseineq} (Gromov's $\fillellipse$-inequality) in dimension $4$ as follows, which removes a topological assumption from the original inequality.
\begin{cor}
    Let $V$ be a Riemannian manifold with boundary which has positive boundary mean curvature $H_{\partial V} >0$, let $\underline{V} \subset \mathbb{R}^{4}$ be a convex domain with smooth boundary and let $f: \partial V \rightarrow \partial \underline{V}$ is a diffeomorphism. If $R_{V} \ge 0$, then the differential of $f$ cannot be everywhere strictly smaller than the ratio of the mean curvatures of the two boundaries: there exists a point $v \in \partial V$, such that
    \begin{align} \label{mccomparisoncondition}
    ||df(v)|| \ge \frac{H_{\partial V}(f(v))}{H_{\partial \underline{V}}(v)}.
    \end{align}
\end{cor}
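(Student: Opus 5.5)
We argue by contradiction. Suppose no point $v\in\partial V$ satisfies (\ref{mccomparisoncondition}), that is,
\[
\|df(v)\| < \frac{H_{\partial V}(v)}{H_{\partial\underline V}(f(v))}\quad\text{for every } v\in\partial V,
\]
with the mean curvatures read at the natural points. The plan is to convert this into the rescaled comparison $H_g^2 g|_{\partial V}\geq H_0^2 g_E|_{\partial\underline V}$ of Theorem~\ref{main rigidity thm}, apply that theorem, and then contradict the strictness.

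\textbf{Step 1: translating the hypothesis.} For any tangent vector $X\in T_v\partial V$ we have
\[
|df(X)|_{g_E}\le \|df(v)\|\,|X|_g .
\]
Hence the strict assumption gives
\[
H_0(f(v))^2\,|df(X)|_{g_E}^2 < H_g(v)^2\,|X|_g^2 \quad\text{for } X\neq 0 .
\]
This says precisely that $H_g^2 g > f^{*}(H_0^2 g_E)$ strictly as quadratic forms on $\partial V$. The comparison is understood through the identification $f$, as it is in Theorem~\ref{main rigidity thm}. Also, $H_{\partial V}>0$ gives mean convexity, and $\underline V$ is a smooth convex domain in $\mathbb R^4$ with $\partial V\cong\partial\underline V$ via $f$. (If $H_0$ vanishes somewhere, the inequality is trivially strict there, so nothing changes.)

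\textbf{Step 2: applying rigidity.} All hypotheses of Theorem~\ref{main rigidity thm} now hold, with $M=V$. Note that compactness of $V$ is implicit. Therefore $(V,g)$ is isometric to $(\underline V,g_E)$. Call this isometry $\Phi$. It is worth stressing that Theorem~\ref{main rigidity thm} does not require $V$ to be a ball, which is why the topological assumption of Theorem~\ref{blueellipseineq} can be dropped.

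\textbf{Step 3: reaching the contradiction.} From Step 2 the boundaries are isometric and $H_{\partial V}=H_0\circ\Phi$. However, the rigidity alone does not identify $\Phi|_{\partial V}$ with $f$, so we need an extra argument that the strict inequality is impossible. The cleanest route is to choose $\lambda>1$ close to $1$ such that the strict inequality persists after rescaling the target domain: replace $\underline V$ by $\lambda\underline V$. Then
\begin{itemize}
\item $g_E$ scales by $\lambda^2$,
\item $H_0$ scales by $\lambda^{-1}$,
\item so $H_0^2 g_E$ is scale invariant,
\end{itemize}
and rescaling alone does not help. Instead, use the strict inequality together with compactness of $\partial V$ to obtain a constant $c>1$ with
\[
H_g^2 g\ge c^2 f^*(H_0^2 g_E).
\]
Now perturb the interior metric: set $\tilde g = c^{2/3}g$ near the boundary... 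Scaling $g$ alone is also scale invariant, though, so this does not help either. We therefore compare volumes or areas directly. Rigidity gives an isometry $\partial V\cong\partial\underline V$ under which $H_g=H_0$. Pass to the $f$-identification via normalized measures. The measure $H_g^{3}\,d\mathrm{vol}_{g|_{\partial V}}$ is scale invariant, and:
\begin{itemize}
\item the strict form inequality gives $\int_{\partial V}H_g^3\,d\mathrm{vol}_g > \int_{\partial\underline V}H_0^3\,d\mathrm{vol}_{g_E}$, using the pointwise Jacobian of $f$ and the fact that $f$ is a diffeomorphism;
\item the isometry $\Phi$ shows these two integrals are equal.
\end{itemize}
This is a contradiction.

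The main subtlety lies in Step 3. Rigidity produces some isometry rather than $f$ itself, so the contradiction has to go through a scale-invariant integral quantity. Equality of that quantity under $\Phi$, set against the strict inequality under $f$, closes the argument.
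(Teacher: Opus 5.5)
Your argument is correct and follows the paper's route. The paper only asserts the corollary as a direct consequence of Theorem~\ref{main rigidity thm}: negate the conclusion, get the strict comparison $H_g^2g>f^{*}(H_0^2g_E)$, and invoke rigidity. Your comparison of the scale-invariant total $\int H^{3}\,d\mathrm{vol}$ over the two boundaries supplies the contradiction step that the paper leaves implicit. It does not need the isometry to restrict to $f$. It also stays valid when the rigidity holds only up to homothety, which is all one can expect since the hypotheses are invariant under $g\mapsto\lambda^2 g$.

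For the write-up, delete the abandoned rescaling and perturbation attempts in Step~3. State instead the fact actually used there: $A>B\geq 0$ as quadratic forms on $T_v\partial V$ forces $\det_g A>\det_g B$, whence $H_g^{3}\,d\mathrm{vol}_g>(H_0\circ f)^{3}\,|\det df|\,d\mathrm{vol}_g$ pointwise.
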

\subsection{Outline of the proof} We outline the proof of main results. The result mainly divides into the proof of the slicing theorem (Theorem \ref{main slicing thm}) finding a nontrivial minimizer for appropriately perturbed functionals and the rigidity theorem (Theorem \ref{main rigidity in general dimension}) deducing global rigidity from local variational information of a two-dimensional bottom capillary minimal slice with its stability analysis.

The first key ingredient is the local mean curvature estimate (Proposition \ref{prop:local estimates}) between mean curvature of $\partial M$ and the trace form of the second fundamental form $\RN{2}$ on the boundary $\partial \bar{M}$ of an Euclidean domain. In particular, for any orthonormal frame $\{\tau_i\}_{i=1}^k$ of $T_p \partial M$ with respect to a Riemannian metric $g$ and orthonormal frame $\{\bar{\tau}_i\}_{i=1}^k$ of $T_p \partial \bar{M}$, the following inequality holds:
\begin{align}\label{eq: local estimate intro}
    H_{\partial M}\geq \sum_{i=1}^k\RN{2}(\tau_i,\bar{\tau}_i),
\end{align}
This is a generalization of the local curvature estimate in dimension $3$ in Ko-Yao \cite{koyao} (similar trace type quantity and estimate appear in Bär-Brendle-Chow-Hanke \cite{bar2023rigidity} and Wang-Wang-Zhou \cite{wang2024scalar}). The estimate follows from the boundary mean curvature comparison $H_g^2g|_{\partial M}\geq H_0^2g_{E}|_{\partial \bar{M}}$ for principal curvature directions and is extended to general directions by rotations. In particular, the equality condition aligns $\tau_{i}$ to $\bar{\tau}_{i}$ for all $i$.

The main part of the proof of the main slicing theorem (Theorem \ref{main slicing thm}) is devoted to obtaining a winding number in the bottom slice and then obtaining the Euclidean rigidity. More precisely, we use the generalization of the local curvature estimates and an inductive procedure to prove that the boundary integral term which comes from the stability inequality is bounded below by a winding number. 
In dimension $4$, we prove the existence of the minimal slicing. In each slicing, the construction of a graphical foliation which serves as a barrier which has nonnegative mean curvature and contact angle which is larger than the prescribed angle functions $\{ \bar{\theta}_{i} \}$. In particular, the construction of the foliation in the first slicing is the direct generalization of the construction in dimension $3$ in Ko-Yao \cite{koyao}, where the nonnegative mean curvature of leaves follow from the mean curvature comparison between the boundaries in Riemannian and Euclidean metric. However, we do not have comparison (weighted) mean curvature at points of the slice where the direction of normal vector of Riemannian and Euclidean slicing do not agree. In other words, the normal vector of $\partial \Sigma_{1}$ in $\partial M$ does not agree with the normal vector of $\partial \bar{\Sigma}_{1}$ in $\partial \bar{M}$ in general. To overcome lack of comparison, we pick a point $p_{2}$ on $\partial \Sigma_{1}$ such that the $x_{4}$-coordinate of $p_{1}$ is maximum over $\partial \Sigma_{1}$ and take the axis $x_{3}$ to achieve the maximum coordinate at $p_{1}$. With this procedure, we can identify $T_{p_{1}} \partial \Sigma_{1}$ and $T_{p_{1}} \partial \bar{\Sigma}_{1}$ and apply the strong version of the local curvature estimate, and obtain the desired foliation on each slicing with a specific coordinate. 

The proof of the rigidity statement (Theorem \ref{main rigidity in general dimension}) separates into two parts: extracting the infinitesimal rigidity condition on slices from the stability inequality, and proving the global rigidity from the infinitesimal rigidity. We obtain the infinitesimal rigidity from the stability analysis of the (weighted) capillary area functional. Assuming the existence of a codimension $n$ stable minimal slicing, we can modify the second variation formula by slicing identities and obtain the estimate on the bottom slice $\Sigma_{n}^{2}$ (Proposition \ref{prop: n-th slicing}). Another important ingredients are angle identities (Proposition \ref{prop: Angle Identities}). The identities transform the first derivative of the angle function on the boundary by the second fundamental form on the boundary of the Euclidean domain. Combining the angle identity and the local estimate (\ref{eq: local estimate intro}) with Proposition \ref{prop: n-th slicing}, we obtain the global comparison and rigidity by the winding number argument as in dimension $3$.

We prove the global rigidity using the infinitesimal rigidity by proving the local splitting theorem. Assuming the existence of a flat nontrivial minimizer, we adopt the construction of dense set of flat slices on small neighborhood of the minimizer developed in Carlotto-Chodosh-Eichmair \cite{carlotto2016effective}, relies on the conformal deformation of the metric near the minimizer. Since the argument does not depend on the dimension, we can apply the construction as far as the regularity of the minimizer is guaranteed. In dimension 4, minimizers of the capillary area functional are smooth and we can prove the curvature estimate for area-minimizing capillary minimal hypersurfaces by Chodosh-Edelen-Li \cite{chodosh2024improved}, which gives the compactness of minimizing hypersurfaces.

\subsection{Discussion in higher dimensions} The obstructions to prove the rigidity theorem (Theorem \ref{main rigidity thm}) in dimensions larger than or equal to $5$ are twofold. One is the lack of regularity of minimizer of the capillary area functional and another is that it is hard to find an extrinsic third angle functional that would guaranty the existence of a nontrivial minimizer.

The best full regularity of the capillary minimizer known to authors is codimension $4$ singularity in \cite{chodosh2024improved}, and this may preclude us from the stability analysis of minimizers (See \cite{firester2026area} and \cite{pacati2025some} for partial regularities in higher dimensions). Since the possible singularities of capillary minimizers are isolated singularities in dimension $5$, the regularity issue may be diverted by analyzing stability inequality outside those singular sets (See \cite{he2025positive} and \cite{cecchini2024positive} for such analysis on point singularities).

Another significant issue on iterative capillary minimal slicings is the choice of angle functionals that ensure the existence of a nontrivial minimizer. We obtain the existence of a nontrivial minimizer by constructing a barrier which achieves a strictly negative capillary area functional. An eligible foliation can be achieved by a large contact angle with nonnegative mean curvature on leaves. As we discussed earlier, this foliation can be obtained in a small neighborhood at all points for the first slicing, and can be obtained at a point where tangent planes of codimension $2$ slices in $g$ metric and Euclidean metric agree for the second slicing by maximum principle type arguments. However, we cannot guaranty the existence of such a point for the third slicing. Without the coincidence of two tangent planes $T_{p_{3}} \partial \Sigma_{3}$ and $T_{p_{3}} \partial \bar{\Sigma}_{3}$, We have neither a mean curvature comparison derived from the local estimate (\ref{eq: local estimate intro}) nor a contact angle comparison. 

In terms of a contact angle comparison, a local foliation in a Euclidean metric is necessary to compare contact angles with one in a Riemannian metric. However, we may not have a local foliation without a fixation of two tangent planes since we do not have a control of contact angle function $\bar{\theta}_{3}$ on different slices. 
\subsection{Organization} The organization of this paper is as follows. In Section $2$, we introduce the setup of weighted capillary slicing, derive stability analysis, and prove angle identities. In Section $3$, we prove the local comparison estimate. In Section $4$, we prove the existence of non-trivial minimizers for capillary minimal slicings in dimension $4$. In Section $5$, we prove the rigidity statement by showing the local splitting theorem.
\subsection{Acknowledgements}
D.K. would like to appreciate Tobias Holck Colding, William Minicozzi, and Daniel Ketover for their encouragement and support and X.Y. would like to thank Sun-Yung Alice Chang and Xin Zhou for their encouragement and support. We are grateful to Gioacchino Antonelli, Christine Breiner, Simone Cecchini, Otis Chodosh, Chao Li, Jian Wang and Chao Xia for their interest in our work. D.K. thanks to Raphael Tsiamis for inspiring conversations related to this work. 
\section{Preliminaries}
In this section, we introduce the capillary angle functional and weighted capillary slicing. Based on the first and second variation formula and first eigenfunction of stability operator, we derive the curvature estimate on the bottom slice. We also prove the angle identity in the last section. We formulate our settings on general $(n+2)$-dimensional domain.
\subsection{Set-up of weighted capillary slicing}

Suppose $\bar{M}^{n+2}$ is a strictly convex domain in $\mathbb R^{n+2}$, we label the $(n+2)$-dimensional Euclidean coordinates as $(x_1,x_2,\cdots, x_{n+2})$. $\partial \bar{M}$ is a closed $(n+1)$-hypersurface in $\mathbb R^{n+2}$, we take $p^+_1,p_1^{-}\in\partial \bar{M}$, such that by rotating the coordinate, we have

\[
x_{n+2}(p_1^+)>w(p)>x_{n+2}(p_1^-),\qquad \forall p\in \partial \bar{M}\setminus\{{p_1^+,p_1^-}\}.
\]
Equivalently, we have that the tangent planes $T_{p_1^+}\partial \bar{M}$ and $T_{p_1^-}\partial \bar{M}$ are parallel and their unit normal is $e_{n+2}$.
For $x\in\partial \bar{M}$, we define the first capillary angle $\bar{\theta}_1$ as
\begin{align*}
    \bar{\theta}_1(x):=\arccos \langle\bar{X},e_{n+2}\rangle_{g_{Eucl}}.
\end{align*}
Let $\bar{\Sigma}_1^{t_1}=\{x_{n+2}=t_1\}\cap \bar{M}$, we denote $\bar{\eta}_1(x)$ as the unit outer normal of $\partial\bar{\Sigma}_1^{t_1}$ in $\bar{\Sigma}_1^{t_1}$, then $\bar{\eta}_1$ is well-defined for all $x\in \partial \bar{M}\setminus\{p_1^+,p_1^-\}$.We denote the unit co-normal of $\partial\bar{\Sigma}_1^{t_1}$ in $\partial \bar{M} $ as $\bar{\nu}_1$, then it is well-defined for all $x\in \partial \bar{M}\setminus\{p_1^+,p_1^-\}$.

Similarly, we pick the coordinate $x_{n+1}$ and define $\bar{\Sigma}_{2,t_1}^{t_2}=\bar{\Sigma}_1^{t_1}\cap \{x_{n+1}=t_2\}$. For each $x\in\partial\bar{\Sigma}^{t_2}_{2,t_1}$, we define the second capillary angle $\bar{\theta}_2$ as
\begin{align*}
    \bar{\theta}_2(x):=\arccos \langle\bar{\eta}_1(x),e_{n+1}\rangle_{g_{Eucl}}.
\end{align*}
For each $t_1$, we let $p_{2,t_1}^{+}$ ($p_{2,t_1}^-$) be the point on $\partial \bar{\Sigma}_1^{t_1}$ where the maximum (minimal) value of $x_{n+1}$-coordinate is achieved.
Similarly, we define $\bar{\eta}_2$ to be the unit outer normal vector of $\partial\bar{\Sigma}_{2,t_1}^{t_2}$ in $\bar{\Sigma}_{2,t_1}^{t_2}$ and $\bar{\nu}_2$ to be the unit co-normal vector of $\partial\bar{\Sigma}_{2,t_1}^{t_2}$ in $\partial\bar{\Sigma}_1^{t_1}$. Both $\bar{\eta}_2$ and $\bar{\nu}_2$ are well-defined vector fields on $\partial \bar{M}$ except the
Continuing the above process n times, we obtain $k$-th capillary functional $\bar{\theta}_{k}$ for $1\leq k\leq n$.

\subsection{Capillary functionals and variational formulas}
We are now ready to introduce our capillary functionals and its variation formulas. The \textit{first capillary functional} is defined as
\begin{align}
\mathcal A_{\bar{\theta}_1}(\Omega):=|\partial^i\Omega|-\int_{\partial^b \Omega}\cos\bar{\theta}_1 d\mathcal H^3,
\end{align}
where $\Omega\subset M$ is a domain containing $p_1$, and $\partial^i\Omega$ denotes $\partial \Omega\cap \text{Int}(M)$ and $\partial^b\Omega$ denotes $\partial\Omega\cap\partial M$.

We consider the minimization problem of $\mathcal A_{\bar{\theta}_1}$ as follows:
\begin{align}
    \mathcal I_1=\inf\{\mathcal A_{\bar{\theta}_1}(\Omega):\Omega\in\mathcal E_1\},
\end{align}
where $\mathcal E_1$ is a collection of open sets $E_1$ such that $p_1\in E_1$.
Suppose that we obtain a non-trivial minimizer $\Sigma_1$ of $\mathcal A_{\bar{\theta}_1}$ among $\mathcal E_1$. We start with the first and second variation formulas of $\mathcal A_{\bar{\theta}_1}$. 

Suppose $\Sigma=\partial_{rel}\Omega$ separates $M$ into two components. Let $\nu$ be the unit outer normal vector field of $\partial \Sigma$ in $\partial M$, $\eta_1$ be the unit outer normal of $\partial\Sigma_1$ in $\Sigma_1$. 

Let $\Psi(\Sigma_1,t):\Sigma_1\times (-\epsilon,\epsilon)\to M$ be a family of diffeomorphisms such that $\Psi(\Sigma_1,0)=\Sigma_1$ and $\Psi(\partial\Sigma_1,t)\subset \partial M$. Denote $Y=\frac{\partial \Psi}{\partial t}(t,\cdot)$, the vector field that generates $\Psi$. Note that $Y$ lies in the tangent space of $\partial M$. Define $\mathcal A_{\bar{\theta}_1}(t)=\mathcal A_{\bar{\theta}_1}(\Sigma_t)$ and $\varphi=\langle Y,N_1\rangle_g$, where $N_1$ is the unit normal vector of $\Sigma_t$. We use $H_{\Sigma_1}$to denote the mean curvature of $\Sigma$ in $M$. We have the following first and second variation formula of $\mathcal{A}_{\bar{\theta}_1}$.

\begin{prop}\label{prop: first slicing variation}
The first variation formula of $\mathcal A_{\bar{\theta}_1}$ is
\begin{align}
\frac{d}{dt}\mathcal A_{\bar{\theta}_1}(0)=-\int_{\Sigma_1}\varphi H_{\Sigma_1} d\mathcal H^3+\int_{\partial\Sigma_1}\langle Y,\eta-\nu\cos\bar{\theta}_1\rangle_gd \mathcal H^2.
\end{align}

The second variation formula is
\begin{align} \label{secondvariationtheta1}
\frac{d^2}{dt^2}\mathcal A_{\bar{\theta}_1}(0)=-\int_{\Sigma_1}\left(\Delta_{\Sigma_1}\varphi+\left(\text{Ric}_{M}(N_1,N_1)+\|A_{\Sigma_1}\|^2\right)\varphi\right)\varphi d\mathcal H^3+\int_{\partial\Sigma_1}\varphi\left(\frac{\partial \varphi}{\partial\eta}-Q_{1}\varphi\right),
\end{align}
where
\begin{align} \label{q1def}
Q_{1}=\frac{1}{\sin \overline{\theta}_1} A_{\partial M} (\nu_1,\nu_1) - \cot \overline{\theta}_1 A_{\Sigma_1}(\eta_1,\eta_1) + \frac{1}{\sin^{2} \overline{\theta}_1} \partial_{\nu_1} \cos \overline{\theta}_1.
\end{align}
\end{prop}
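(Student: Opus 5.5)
The plan is to follow the standard derivation of variational formulas for capillary functionals, as in Ros--Souam and Li's polyhedron paper, keeping track of the fact that the contact angle $\bar{\theta}_1$ is a function on $\partial M$ rather than a constant.

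\textbf{First variation.} The interior term is handled by the usual first variation of area,
\[
\frac{d}{dt}|\Sigma_t| = -\int_{\Sigma_1}\varphi H_{\Sigma_1} + \int_{\partial\Sigma_1}\langle Y,\eta_1\rangle .
\]
Here the divergence theorem is applied to the tangential part of $Y$. For the wetted term $\int_{\partial^b\Omega_t}\cos\bar{\theta}_1$, the variation of a domain in $\partial M$ moving with boundary velocity $Y|_{\partial\Sigma}$ gives $\int_{\partial\Sigma_1}\cos\bar{\theta}_1\langle Y,\nu_1\rangle$. No derivative of $\cos\bar\theta_1$ appears, because the integrand is fixed on $\partial M$ and only the domain moves. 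Subtracting yields the stated formula.

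\textbf{Second variation at a critical point.} At a critical point we have $H_{\Sigma_1}=0$ and $\eta_1=\cos\bar{\theta}_1\,\nu_1+\sin\bar{\theta}_1\,\bar N$ along $\partial\Sigma_1$, where $\bar N$ is the unit normal of $\partial M$. Equivalently, in the plane spanned by $N_1,\eta_1$ one has $\nu_1=\cos\bar\theta_1\,\eta_1-\sin\bar\theta_1\, N_1$ and $\bar N = \sin\bar\theta_1\,\eta_1+\cos\bar\theta_1\,N_1$, up to sign conventions. Since $Y$ is tangent to $\partial M$, decompose $Y=\varphi N_1+T$ on $\partial\Sigma_1$. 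Tangency to $\partial M$ forces $T\cdot\eta_1=\varphi\cot\bar\theta_1$ (sign per convention) modulo directions tangent to $\partial\Sigma_1$, which can be discarded by reparametrization. Differentiating the first variation once more at $t=0$ splits the computation into two parts.

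\begin{enumerate}
\item \emph{Interior part.} Differentiating $-\int\varphi H$ gives $-\int\varphi(\Delta\varphi+(\mathrm{Ric}(N_1,N_1)+|A|^2)\varphi)$, since $H=0$ kills the other terms.
\item \emph{Boundary part.} Differentiate $\int_{\partial\Sigma_t}\langle Y,\eta_t-\nu_t\cos\bar\theta_1\rangle$. Since the bracket vanishes at $t=0$, only its derivative contributes: $\int_{\partial\Sigma_1}\langle Y,\nabla_Y(\eta-\cos\bar\theta_1\nu)\rangle$.
\end{enumerate}

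For the boundary part, it is cleanest to use the contact-angle variation. Writing $\cos\theta_t=\langle N_t,\bar N\rangle$ (or the equivalent), the derivative of the boundary integrand equals $\frac{1}{\sin\bar\theta_1}\cdot\varphi$ times $\frac{d}{dt}(\cos\theta_t-\cos\bar\theta_1(\Psi_t))$, up to sign. The three pieces of $Q_1$ then come from three sources:
\begin{enumerate}
\item $\frac{d}{dt}\langle N_t,\bar N\rangle$ contains $\langle -\nabla\varphi,\bar N\rangle$, which gives the $\partial_{\eta_1}\varphi$ term.
\item The same derivative contains $\langle N_1,\nabla_Y\bar N\rangle=A_{\partial M}(Y,\cdot)$, which produces the $A_{\partial M}(\nu_1,\nu_1)/\sin\bar\theta_1$ term.
\item The transport of $N_t$ along $T$ produces $A_{\Sigma_1}(\eta_1,\eta_1)\cot\bar\theta_1$.
\end{enumerate}
The genuinely new term comes from moving the base point of $\cos\bar\theta_1$. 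Since the boundary moves with normal velocity $\langle Y,\nu_1\rangle=\varphi/\sin\bar\theta_1$ in $\partial M$, one gets $\partial_{\nu_1}\cos\bar\theta_1\cdot\varphi/\sin\bar\theta_1$. Multiplying by the prefactor $\varphi/\sin\bar\theta_1$ gives the $\frac{1}{\sin^2\bar\theta_1}\partial_{\nu_1}\cos\bar\theta_1$ term. Tangential contributions along $\partial\Sigma_1$ integrate to zero by the divergence theorem.

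\textbf{Main obstacle.} The main difficulty is the careful bookkeeping of signs and orientation conventions, namely $\nu_1$ outward versus $\bar N$ outward and the choice of $N_1$. In particular, one must verify that the coefficient relation $\langle Y,\nu_1\rangle=\varphi/\sin\bar\theta_1$ holds with the stated sign, so that the three terms combine exactly into $Q_1$ as in \eqref{q1def}. I would first carry out this computation in the model case of a flat slab in a Euclidean half-space to fix all signs, and then transfer it to the general case.
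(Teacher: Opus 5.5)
Your plan is the standard capillary second-variation computation. The paper states this proposition without proof and relies on exactly that computation, and the contact-angle route you settle on does produce $Q_1$ with the stated signs.

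One intermediate claim in item (2) is wrong as written. The bracket $\eta_1-\cos\bar\theta_1\nu_1$ does not vanish at $t=0$: it equals $\sin\bar\theta_1\bar N$, and only its pairing with $Y$ vanishes. Differentiating $\langle Y,\eta_t-\cos\bar\theta_1\nu_t\rangle$ therefore also produces
\[
\langle\nabla_YY,\sin\bar\theta_1\bar N\rangle=-\sin\bar\theta_1A_{\partial M}(Y,Y)=-\frac{\varphi^2}{\sin\bar\theta_1}A_{\partial M}(\nu_1,\nu_1),
\]
where the last equality holds once the $T\partial\Sigma_1$-part of $Y$ is discarded. If you computed $\int\langle Y,\nabla_Y(\eta-\cos\bar\theta_1\nu)\rangle$ literally, the $A_{\partial M}(\nu_1,\nu_1)/\sin\bar\theta_1$ term would cancel completely against the contribution of $\langle N_1,\nabla_Y\bar N\rangle$.

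The correct bridge to your contact-angle step is the identity
\[
\langle Y,\eta_t-\cos\bar\theta_1\nu_t\rangle=\langle Y,\nu_t\rangle\,(\cos\theta_t-\cos\bar\theta_1),\qquad \cos\theta_t=\langle\eta_t,\nu_t\rangle=\langle N_t,\bar N\rangle .
\]
Here $\cos\bar\theta_1$ is evaluated at the moving point. The identity holds for every $t$ because $Y\in T\partial M=T\partial\Sigma_t\oplus\mathbb R\nu_t$ and the bracket is orthogonal to $T\partial\Sigma_t$. Now the second factor genuinely vanishes at $t=0$, so only its derivative contributes.

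Use your convention $\bar N=\sin\bar\theta_1\eta_1+\cos\bar\theta_1N_1$, which forces $N_1$ to point into $\Omega$, and write $A_{\Sigma_1}(u,v)=\langle\nabla_uN_1,v\rangle$. Then $\langle T,\eta_1\rangle=-\varphi\cot\bar\theta_1$ and $\langle Y,\nu_1\rangle=-\varphi/\sin\bar\theta_1$, so the signs are opposite to what you wrote. At $t=0$,
\[
\frac{d}{dt}(\cos\theta_t-\cos\bar\theta_1)=-\sin\bar\theta_1\,\partial_{\eta_1}\varphi-\varphi\cos\bar\theta_1A_{\Sigma_1}(\eta_1,\eta_1)+\varphi A_{\partial M}(\nu_1,\nu_1)+\frac{\varphi}{\sin\bar\theta_1}\partial_{\nu_1}\cos\bar\theta_1 .
\]
Multiplying by $\langle Y,\nu_1\rangle$ gives exactly $\varphi(\partial_{\eta_1}\varphi-Q_1\varphi)$. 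This matches the paper's later use of $X=\sin\bar\theta_1\eta+\cos\bar\theta_1N_1$.

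Finally, the components of $Y$ tangent to $\partial\Sigma_1$ do not integrate to zero by the divergence theorem. They cancel pointwise: since $\cos\theta_0=\cos\bar\theta_1$ along $\partial\Sigma_1$, their derivatives in directions tangent to $\partial\Sigma_1$ coincide.
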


Let $f_1$ be the first eigenfunction of the stability operator of $\mathcal A_{\bar{\theta}_1}$. We define the \textit{second capillary functional} as 
\begin{align}
\mathcal A_{\bar{\theta}_2}(U):=\int_{\partial^i U}f_1d\mathcal H^2-\int_{\partial^b U}\cos\bar{\theta}_2f_1d\mathcal H^2,
\end{align}
where $U$ is an open domain in a connected component of boundary of minimizer $\Sigma_1$ of $\mathcal{A_{\bar{\theta}_1}}$ containing the point $p_2$, where $p_2$ is the point on $\partial \Sigma_1$ that has the maximum $z$-coordinate function. $\partial^i U=\partial U\cap \text{Int}(\Sigma_1)$ and $\partial^b U=\partial U\cap \partial \Sigma$.

Similarly, we consider the minimization problem of $\mathcal A_{\bar{\theta}_2}$ as follows:
\begin{align}
    \mathcal I_2:=\inf\{\mathcal A_{\bar{\theta}_2}(U): U\in\mathcal E_2\},
\end{align}
where $\mathcal E_2$ is the collection of open sets $E_2$ such that $p_2\in E_2$. Suppose $\Sigma_2=\partial^i U$ separates $\Sigma_1$ into two components. Let $\nu_2$ be the unit outer normal vector field of $\partial U$ in $\partial\Sigma_1$, $\eta_2$ be the unit outer normal of $\partial\Sigma_2$ in $\Sigma_2$. By a direct computation, we have the variation formulas for the weighted capillary functional. We follow the computation from \cite{wu2025rigidity}. 
\begin{prop}\label{prop: second slicing variation formula}
    The first variation formula of $\mathcal A_{\bar{\theta}_2}$ is 
\begin{align}\label{eq: bottom first variation}
\frac{d}{dt}\mathcal A_{\bar{\theta}_2}(0)=-\int_{\Sigma_2}\varphi(H_{\Sigma_2}f_1+\langle \nabla f_1,N_2\rangle_g)d\mathcal H^2+\int_{\partial \Sigma_2}\langle Y_2,\eta_2-\nu_2\cos\bar{\theta}_2\rangle_g f_1 d\mathcal H^1.
\end{align}

The second variation formula is
\begin{align}\label{eq: bottom slice second variation}
\frac{d^2}{dt^2}\mathcal A_{\bar{\theta}_2}(0)=&\int_{\Sigma_2}f_1\varphi\left(-\Delta_{\Sigma_2}\varphi-\left(\|A_{\Sigma_2}\|^2+\text{Ric}_{\Sigma_1}(N_2,N_2)\right)\varphi + \varphi\nabla_{\Sigma_1}^2\log f_1(N_2,N_2)-\langle \nabla_{\Sigma_2}\log f_1,\nabla_{\Sigma_2}\varphi\rangle_g \right)d\mathcal H^2\\
\nonumber&+\int_{\partial\Sigma_2}\varphi\left(\frac{\partial \varphi}{\partial \eta_2}-Q_2\varphi\right)f_1d \mathcal H^1,
\end{align}
where
\begin{align}
Q_2=&\frac{1}{\sin\bar{\theta}_2}\left(A_{\partial\Sigma_1} \nonumber (\tau_2,\tau_2)+\frac{1}{2}\langle \nabla \log f_1,\eta\rangle_g\right)-\cot\bar{\theta}_2\left(A_{\Sigma_2}(\eta_2,\eta_2)+\frac{1}{2}\langle\nabla \log f_1,N_2\rangle_g\right)\\
&+\frac{1}{\sin^2\bar{\theta}_2}\partial_{\tau_2}\cos\bar{\theta} \nonumber
_2-\frac{1}{2}\langle \nabla \log f_1,\eta_2\rangle_g \\
=&\frac{1}{\sin\bar{\theta}_2}A_{\partial\Sigma_1}(\tau_2,\tau_2)-\cot\bar{\theta}_2A_{\Sigma_2}(\eta_2,\eta_2)
+\frac{1}{\sin^2\bar{\theta}_2}\partial_{\tau_2}\cos\bar{\theta}
_2. \label{simplifiedq2}
\end{align}
\end{prop}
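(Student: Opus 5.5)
Since the $f_1$-weighted functional is the capillary functional of a conformally related problem, I would not redo the variation from scratch. Instead I would write $\mathcal A_{\bar\theta_2}$ as an integral over $\Sigma_2$ and over the boundary piece $\partial^bU\subset\partial\Sigma_1$, and differentiate each term along the family $\Psi_t$ with $Y_2$ tangent to $\partial\Sigma_1$ on $\partial\Sigma_2$. I would follow the computation in \cite{wu2025rigidity}.

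\emph{First variation.} The first variation of the weighted area is
\[
\frac{d}{dt}\int_{\Sigma_t} f_1 = \int_{\Sigma_2}\bigl(\langle\nabla f_1,Y_2\rangle + f_1\,\mathrm{div}_{\Sigma_2}Y_2\bigr).
\]
I split $Y_2=\varphi N_2+Y_2^T$. The normal part gives $\varphi(\langle\nabla f_1,N_2\rangle - f_1H_{\Sigma_2})$. The tangential part integrates by parts to $\int_{\partial\Sigma_2} f_1\langle Y_2,\eta_2\rangle$. For the boundary term, the region $\partial^bU\subset\partial\Sigma_1$ sweeps out at rate $\langle Y_2,\nu_2\rangle$ along $\partial\Sigma_2$, which gives $-\int_{\partial\Sigma_2}f_1\cos\bar\theta_2\langle Y_2,\nu_2\rangle$. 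Together these yield \eqref{eq: bottom first variation}. Critical points then satisfy $H_{\Sigma_2}=-\langle\nabla\log f_1,N_2\rangle$ in the interior and $\langle N_2,\bar N\rangle$-angle $\bar\theta_2$ on the boundary, i.e. $\eta_2=\cos\bar\theta_2\,\nu_2+\sin\bar\theta_2\,\bar N_{\partial\Sigma_1}$.

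\emph{Second variation.} At a critical point I differentiate once more. In the interior I use the standard evolution equations
\[
\partial_t H=-\Delta\varphi-(|A|^2+\mathrm{Ric}_{\Sigma_1}(N_2,N_2))\varphi+\text{(tangential terms)}
\]
and
\[
\partial_t\langle\nabla f_1,N_2\rangle=\varphi\nabla^2f_1(N_2,N_2)-\langle\nabla_{\Sigma_2}f_1,\nabla_{\Sigma_2}\varphi\rangle.
\]
Since the first variation vanishes, the tangential contributions cancel. Rewriting $\nabla^2f_1=f_1(\nabla^2\log f_1+d\log f_1\otimes d\log f_1)$ and absorbing the square term, after integrating by parts against the weight, produces the interior integrand of \eqref{eq: bottom slice second variation}. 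I expect the bookkeeping of the $\frac12\langle\nabla\log f_1,\cdot\rangle$ terms to be delicate: one should symmetrize $\int f_1\varphi(-\Delta\varphi-\langle\nabla\log f_1,\nabla\varphi\rangle)$ as $\int f_1|\nabla\varphi|^2$, and this is what leaves the explicit $\varphi\,\partial_{\eta_2}\varphi\, f_1$ term on the boundary.

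\emph{Boundary term.} This is the main obstacle. I would first choose the variation so that $Y_2=\varphi N_2+\psi\eta_2$ along $\partial\Sigma_2$, with $\psi=-\varphi\cot\bar\theta_2$ forced by tangency to $\partial\Sigma_1$; then $\langle Y_2,\nu_2\rangle=\varphi/\sin\bar\theta_2$. Next I differentiate the boundary integrand $f_1\langle Y_2,\eta_2-\cos\bar\theta_2\nu_2\rangle$ along the flow. Three contributions appear:
\begin{itemize}
\item the variation of the angle, which gives $A_{\partial\Sigma_1}(\tau_2,\tau_2)/\sin\bar\theta_2$ and $-\cot\bar\theta_2A_{\Sigma_2}(\eta_2,\eta_2)$ via the usual capillary computation (the same as for Proposition \ref{prop: first slicing variation});
\item the derivative of $\cos\bar\theta_2$ along $\nu_2$ (here denoted $\tau_2$), scaled by $1/\sin^2\bar\theta_2$;
\item the derivative of $f_1$ along $Y_2$, which produces the $\langle\nabla\log f_1,\cdot\rangle$ terms listed in the first expression for $Q_2$.
\end{itemize}

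Finally, for the simplified form \eqref{simplifiedq2}, I would use $\eta_2=\cos\bar\theta_2\nu_2+\sin\bar\theta_2\bar N$ together with the decomposition of the conormal $\eta$ of $\partial\Sigma_1$. The point is that the combination
\[
\tfrac{1}{\sin\bar\theta_2}\langle\nabla\log f_1,\eta\rangle-\cot\bar\theta_2\langle\nabla\log f_1,N_2\rangle-\langle\nabla\log f_1,\eta_2\rangle
\]
vanishes, because $\eta$ and $N_2,\eta_2$ span the same normal plane to $\partial\Sigma_2$, with the appropriate sine and cosine coefficients. Checking these sign and orientation conventions is where I would be most careful.
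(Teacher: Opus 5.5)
Your route is the paper's. The paper does no computation beyond deferring to \cite{wu2025rigidity}, and then uses $\eta=\sin\bar\theta_2\eta_2+\cos\bar\theta_2N_2$ to cancel the $\log f_1$ terms in $Q_2$, which is exactly your final step. Two steps of your sketch are wrong as written, though both are fixable.

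First, there is a sign inconsistency. Your normal part $\varphi(\langle\nabla f_1,N_2\rangle-f_1H_{\Sigma_2})$ uses the convention $\mathrm{div}_{\Sigma_2}N_2=-H_{\Sigma_2}$, as in Proposition \ref{prop: first slicing variation}. With that convention, critical points satisfy $H_{\Sigma_2}=+\langle\nabla\log f_1,N_2\rangle$, not the equation you state. It also does not reproduce the sign of the $\langle\nabla f_1,N_2\rangle$ term in \eqref{eq: bottom first variation}. That display, together with the critical-point equation the paper uses later, corresponds to $H_{\Sigma_2}=\mathrm{div}_{\Sigma_2}N_2$ and $\varphi=-\langle Y_2,N_2\rangle$. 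The inconsistency matters in your interior computation. The term $\varphi H_{\Sigma_2}\langle\nabla f_1,N_2\rangle$ from $\partial_t(f_1H_{\Sigma_2})$ cancels the $f_1\varphi\langle\nabla\log f_1,N_2\rangle^2$ part of $\varphi\nabla^2 f_1(N_2,N_2)$ only when the critical-point equation has the sign dictated by your own first variation. With the signs as you wrote them, the two terms add instead. Fix one convention throughout; the final interior integrand is quadratic in $\varphi$ and does not depend on the convention.

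Second, the mechanism you propose for the $\frac12\langle\nabla\log f_1,\cdot\rangle$ terms in $Q_2$ is not what happens. At a critical point, $\eta_2-\cos\bar\theta_2\nu_2$ is normal to $\partial\Sigma_1$ (it equals $\sin\bar\theta_2\,\eta$), while $Y_2$ is tangent to $\partial\Sigma_1$. So the boundary integrand $f_1\langle Y_2,\eta_2-\cos\bar\theta_2\nu_2\rangle$ vanishes pointwise on $\partial\Sigma_2$. When you differentiate it, the derivatives of $f_1$ and of $d\mathcal H^1$ drop out, and $f_1$ simply factors out of the unweighted pointwise capillary computation. A direct computation therefore gives the simplified form \eqref{simplifiedq2} immediately, with no $\log f_1$ terms on the boundary. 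Likewise, the $\varphi\,\partial_{\eta_2}\varphi$ term comes from $\langle Y_2,\partial_t\eta_2\rangle=-\varphi\langle\partial_t N_2,\eta_2\rangle$. Integrating by parts to $\int f_1|\nabla\varphi|^2$ would cancel that term, not create it.

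The unsimplified expression with the factors $\frac12$ comes from the conformal route you mention in your first sentence. Since $\Sigma_2$ and $\partial^bU$ are $2$-dimensional, $\mathcal A_{\bar\theta_2}$ is the unweighted capillary functional for $\tilde h=f_1h=e^{2u}h$ with $u=\frac12\log f_1$. Convert the unweighted formula back to $h$ using $\tilde\varphi=e^{u}\varphi$, $\tilde\partial_{\tilde\eta_2}\tilde\varphi=\partial_{\eta_2}\varphi+\varphi\,\partial_{\eta_2}u$, and $\tilde A(\tilde X,\tilde X)=e^{-u}(A(X,X)+\partial_n u)$ for unit $X$ and unit normal $n$. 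This produces exactly the extra terms $\frac{1}{\sin\bar\theta_2}\partial_\eta u-\cot\bar\theta_2\,\partial_{N_2}u-\partial_{\eta_2}u$. Either way, your closing identity shows these extra terms vanish, so with the two corrections your argument proves the proposition.
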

The simplification (\ref{simplifiedq2}) follows from the identity
\begin{align*}
    \eta(x)=\sin\bar{\theta}_2\eta_2(x)+\cos\bar{\theta}_2N_2(x),\quad \forall x\in\partial\Sigma_2,
\end{align*}
where the identity follows from the first variation formula of $\mathcal A_{\bar{\theta}_2}$.

We have the following identity:
\begin{align}
    \Delta_{\Sigma}f_1=\Delta_{\Sigma_2}f_1+\nabla^2f_1(N_2,N_2)+\langle\nabla f_1,N_2\rangle_gH_{\Sigma_2}.
\end{align}

By the first variation formula \eqref{eq: bottom first variation}, we have that
\begin{align*}
    \Delta_{\Sigma}f_1=\Delta_{\Sigma_2}f_1+\nabla^2f_1(N_2,N_2)-f_1^{-1}\langle \nabla f_1,N_2\rangle_g.
\end{align*}

We now rewrite the second variation formula on $\Sigma_2$ \eqref{eq: bottom slice second variation} as
\begin{align*}
    \frac{d^2}{dt^2}\mathcal A_{\bar{\theta}_2}(0)=&\int_{\Sigma_2}f_1\varphi\left(-\Delta_{\Sigma_2}\varphi-\left(\|A_{\Sigma_2}\|^2+\text{Ric}_{\Sigma}(N_2,N_2)\right)\varphi + \nabla_{\Sigma}^2\log f_1(N_2,N_2)\varphi-\langle \nabla_{\Sigma_2}\log f_1,\nabla_{\Sigma_2}\varphi\rangle_g \right)d\mathcal H^2\\
\nonumber&+\int_{\partial\Sigma_2}\varphi\left(\frac{\partial \varphi}{\partial \eta_2}-Q_2\varphi\right)f_1d \mathcal H^1\\
=&\int_{\Sigma_2}-\nabla_{\Sigma_2}\cdot\left(f_1\varphi\nabla_{\Sigma_2}\varphi\right)+f_1|\nabla_{\Sigma_2}\varphi|^2+\varphi^2\Delta_{\Sigma}f_1-\varphi^2\Delta_{\Sigma_2}f_1-(\|A_{\Sigma_2}\|^2+\text{Ric}_{\Sigma}(N_2,N_2))f_1\varphi^2\\
&+\int_{\partial\Sigma_2}\varphi\left(\frac{\partial \varphi}{\partial\eta_2}-Q_2\varphi\right)f_1d\mathcal H^1\\
=&\int_{\Sigma_2}f_1|\nabla \varphi|^2+\langle \nabla_{\Sigma_2}\varphi^2,\nabla_{\Sigma_2}f_1\rangle_g-\left(\text{Ric}_M(N_1,N_1)+\|A_{\Sigma}\|^2+\text{Ric}_{\Sigma}(N_2,N_2)+\|A_{\Sigma_2}\|^2+\lambda_1\right)f_1\varphi^2\\
&+\int_{\partial\Sigma_2}-\varphi^2\frac{\partial f_1}{\partial \eta_2}-Q_2\varphi^2f_1\\
=&\int_{\Sigma_2}f_1|\nabla \varphi|^2+\langle \nabla_{\Sigma_2}\varphi^2,\nabla_{\Sigma_2}f_1\rangle_g-\frac{1}{2}\left(R_g+\|A_{\Sigma}\|^2+\|A_{\Sigma_2}\|^2+H_{\Sigma_2}^2+2\lambda_1\right)f_1\varphi^2\\
&-\int_{\partial\Sigma_2}\left(\frac{\partial f_1}{\partial\eta_2}+Q_2f_1\right)\varphi^2d\mathcal H^1,
\end{align*}
where $\lambda_1\geq 0$ is the first eigenvalue of the stability operator of $\Sigma$, and we applied the Traced Gauss equation twice in the above computations.

Before plugging in a specific test function, we rewrite the $Q_2$ as follows
\begin{align*}
    Q_2=& \frac{1}{\sin\bar{\theta}_2}A_{\partial\Sigma}(\tau_2,\tau_2)-\cot\bar{\theta}_2A_{\Sigma_2}(\eta_2,\eta_2)+\frac{1}{\sin^2\bar{\theta}_2}\partial_{\tau_2}\cos\bar{\theta}_2\\
    =&-H_{\Sigma_2}\cot\bar{\theta}_2+\frac{H_{\partial\Sigma}}{\sin\bar{\theta}_2}-\kappa_{\partial\Sigma_2}+\frac{1}{\sin^2\bar{\theta}_2}\partial_{\tau_2}\cos\bar{\theta}_2\\
    =&\cot\bar{\theta}_2\langle \nabla \log f_1, N_2\rangle_g+\frac{H_{\partial\Sigma}}{\sin\bar{\theta}_2}-\kappa_{\partial\Sigma_2}+\frac{1}{\sin^2\bar{\theta}_2}\partial_{\tau_2}\cos\bar{\theta}_2\\
    \end{align*}

Inductively, one proceeds the $n$-th capillary slicing as follows. Suppose $f_{n-1}$ is the first eigen function of the stability operator $\mathcal A_{\bar{\theta}_{n-1}}$, we define the \textit{$n$-th capillary functional} as 
\begin{align}
    \mathcal A_{\bar{\theta}_n}(U_n):=\int_{\Sigma_n}f_1\cdots f_{n-1}-\int_{\partial U_n\cap \partial \Sigma_{n-1}}f_1\cdots f_{n-1}\cos\bar{\theta}_n,
\end{align}
where 
\[
\cos\bar{\theta}_n:=\langle \bar{\eta}_{n-1},e_n\rangle,
\]
$e_n$ is the unit vector of $x_{n}$-direction.
We consider the minimization problem of $\mathcal A_{\bar{\theta}_n}$ as follows:
\begin{align}
    \mathcal I_n:=\inf\{\mathcal A_{\bar{\theta}_n}(U_n):U_n\in \mathcal E_n\},
\end{align}
where $\mathcal E_n$ is the collection of open sets $E_n$ of $\Sigma_{n-1}$ where $p_n\in E_{n}$.

\begin{prop}\label{prop: n-th slicing}
Suppose the bottom capillary slice is $\Sigma_n$, we have that
    \begin{align}
    \int_{\Sigma_n}K_{\Sigma_n}+\int_{\partial\Sigma_n}\kappa_g\geq \int_{\partial\Sigma_n}\frac{1}{\Pi_{i=1}^n\sin\bar{\theta_i}}\left(H_{\partial M}-\sum_{i=1}^n\Pi_{j\leq i}\sin\bar{\theta}_j\partial_{\nu_i}\bar{\theta}_i\right),
    \end{align}
    where $\nu_k$ is the unit normal of $\partial\Sigma_k$ in $\partial\Sigma_{k-1}$, $\eta_k$ is the unit outer normal of $\partial\Sigma_k$ in $\Sigma_k$.
\end{prop}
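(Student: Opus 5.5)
We prove Proposition \ref{prop: n-th slicing} by induction on the slicing depth, with the $n=2$ computation above as the model. The bottom-slice stability inequality is tested with $\varphi\equiv 1$. The weighted Gauss equations telescope the interior terms into $2K_{\Sigma_n}$ plus nonnegative quantities, while the boundary terms telescope into a single expression involving $H_{\partial M}$, the geodesic curvature $\kappa_g$ of $\partial\Sigma_n$ in $\Sigma_n$, and derivatives of the angles.

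\textbf{Interior terms.} Write $\rho_{k}=\prod_{i\le k} f_i$. Stability of $\Sigma_n$ for the $\rho_{n-1}$-weighted functional, with $\varphi=1$, gives
\[
0\le -\int_{\Sigma_n}\Big(\tfrac12\big(R_{\Sigma_{n-1}}-R_{\Sigma_n}+\|A_{\Sigma_n}\|^2+H_{\Sigma_n}^2\big)-\nabla^2_{\Sigma_{n-1}}\log\rho_{n-1}(N_n,N_n)\Big)\rho_{n-1}-\int_{\partial\Sigma_n}(\dots).
\]
The first-variation identity $H_{\Sigma_k}=-\langle\nabla\log\rho_{k-1},N_k\rangle$ and the eigen-equation for $f_k$ let us trade $\Delta_{\Sigma_{k-1}}f_k$ for the Ricci-plus-$\|A\|^2$ term, exactly as in the $n=2$ display above. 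Iterating the traced Gauss equation $\mathrm{Ric}_{\Sigma_{k-1}}(N_k,N_k)=\tfrac12(R_{\Sigma_{k-1}}-R_{\Sigma_k}+H_{\Sigma_k}^2-\|A_{\Sigma_k}\|^2)$ then reduces everything to $R_g\ge0$, $\lambda_k\ge0$, squared second fundamental forms, and a term $-\tfrac12 R_{\Sigma_n}=-K_{\Sigma_n}$.

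The delicate point is the gradient terms $|\nabla\log f_k|^2$ produced by the weights. We handle them as Schoen--Yau do: test with $\varphi=\rho_{n-1}^{-1/2}$ rather than $1$, so that $\int f|\nabla\varphi|^2$ and the cross term combine into a nonpositive square. Alternatively, we absorb them by Cauchy--Schwarz against $H_{\Sigma_k}^2$. Either way we arrive at $\int_{\Sigma_n}K_{\Sigma_n}\ge -\int_{\partial\Sigma_n}(\text{boundary terms})$.

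\textbf{Boundary terms.} After integrating by parts, the boundary contributions at level $k$ are $-\partial_{\eta_k}\log f_{k-1}$ and $-Q_k$. As in \eqref{simplifiedq2}, we rewrite $Q_k$ as
\[
\frac{H_{\partial\Sigma_{k-1}}}{\sin\bar\theta_k}-\kappa_{\partial\Sigma_k}+\cot\bar\theta_k\langle\nabla\log\rho_{k-1},N_k\rangle+\frac{\partial_{\nu_k}\cos\bar\theta_k}{\sin^2\bar\theta_k}.
\]
Here $\kappa_{\partial\Sigma_k}$ is the mean curvature of $\partial\Sigma_k$ inside $\partial\Sigma_{k-1}$. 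Two facts are needed to eliminate the weight derivatives:
\begin{enumerate}
\item The boundary condition of $f_{k-1}$ (the Robin condition $\partial_{\eta_{k-1}}f_{k-1}=Q_{k-1}f_{k-1}$).
\item The decomposition $\eta_{k-1}=\sin\bar\theta_k\eta_k+\cos\bar\theta_kN_k$.
\end{enumerate}
Together they make all $\nabla\log f$ terms cancel, as in the $n=2$ simplification.

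The mean curvatures then telescope. Decomposing the mean curvature of $\partial\Sigma_{k-1}$ in $\partial\Sigma_{k-2}$ along $\partial\Sigma_k$ gives
\[
H_{\partial\Sigma_{k-1}}=\kappa_{\partial\Sigma_k}\cdot(\text{angle factor})+\sin\bar\theta_{k-1}\,(\dots).
\]
Iterating this and dividing successively by $\sin\bar\theta_k$ produces the factor $1/\prod_{i\le n}\sin\bar\theta_i$ in front of $H_{\partial M}$, with $\kappa_g$ left over at the bottom. The derivative terms become $\partial_{\nu_i}\cos\bar\theta_i/\sin^2\bar\theta_i=-\partial_{\nu_i}\bar\theta_i/\sin\bar\theta_i$, weighted by the accumulated sine products. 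This gives the coefficients $\prod_{j\le i}\sin\bar\theta_j$ relative to the common denominator.

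\textbf{Main obstacle.} The main difficulty is the bookkeeping in this boundary telescoping. We must relate the mean curvature of $\partial\Sigma_k$ inside $\partial\Sigma_{k-1}$ to its curvature inside $\Sigma_k$ through the contact angle, while keeping track of which weights survive. We would first verify the $n=2$ case carefully against the displayed formula for $Q_2$, and then set up the induction hypothesis as an inequality for $\int_{\partial\Sigma_k}$ with general weights $\rho_{k-1}$.
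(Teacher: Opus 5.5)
Your architecture matches the paper's: the test function $\varphi=\rho_{n-1}^{-1/2}$ (in your indexing $\rho_k=\prod_{i\le k}f_i$), the interior handled by Schoen--Yau, and an inductive collapse of the boundary terms. Commit to that test function. Neither $\varphi\equiv 1$ nor the ``Cauchy--Schwarz alternative'' built on it can give the statement, because the statement concerns the \emph{unweighted} integrals $\int_{\Sigma_n}K_{\Sigma_n}$ and $\int_{\partial\Sigma_n}\kappa_g$. These appear only when $\rho_{n-1}\varphi^2\equiv 1$; with $\varphi\equiv1$ every term keeps the weight $\rho_{n-1}$.

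The genuine gap is the boundary step, which is the only part of the proof not delegated to Schoen--Yau, and you have the mechanism wrong.

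First, in the simplified $Q_k$, $\kappa_{\partial\Sigma_k}$ is the mean curvature of $\partial\Sigma_k$ in $\Sigma_k$ (with respect to $\eta_k$), not in $\partial\Sigma_{k-1}$. For $e$ tangent to $\partial\Sigma_k$, the relation $\eta_{k-1}=\sin\bar\theta_k\eta_k+\cos\bar\theta_kN_k$ turns $\frac{1}{\sin\bar\theta_k}A_{\partial\Sigma_{k-1}}(e,e)-\cot\bar\theta_kA_{\Sigma_k}(e,e)$ into the second fundamental form of $\partial\Sigma_k\subset\Sigma_k$. That is why the leftover at the bottom is $\kappa_g$.

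Second, the Robin condition does not merely cancel weight terms while some separate geometric decomposition telescopes the mean curvatures; it does both at once. The bottom integrand is $\partial_{\eta_n}\log\rho_{n-1}+Q_n$, with the whole weight and not only $f_{n-1}$. By the $\eta$-decomposition it becomes $\frac{1}{\sin\bar\theta_n}\big(\langle\nabla\log\rho_{n-1},\eta_{n-1}\rangle+H_{\partial\Sigma_{n-1}}\big)-\kappa_{\partial\Sigma_n}-\frac{\partial_{\nu_n}\bar\theta_n}{\sin\bar\theta_n}$. Then write $\rho_{k-1}=\rho_{k-2}f_{k-1}$ and substitute $\partial_{\eta_{k-1}}\log f_{k-1}=Q_{k-1}$. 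This gives
\[
\langle\nabla\log\rho_{k-1},\eta_{k-1}\rangle+H_{\partial\Sigma_{k-1}}=\frac{1}{\sin\bar\theta_{k-1}}\Big(\langle\nabla\log\rho_{k-2},\eta_{k-2}\rangle+H_{\partial\Sigma_{k-2}}-\partial_{\nu_{k-1}}\bar\theta_{k-1}\Big).
\]
Two things happen here. The term $-\kappa_{\partial\Sigma_{k-1}}$ in $Q_{k-1}$ cancels $H_{\partial\Sigma_{k-1}}$ exactly. The term $\cot\bar\theta_{k-1}\langle\nabla\log\rho_{k-2},N_{k-1}\rangle$, coming from $H_{\Sigma_{k-1}}=-\langle\nabla\log\rho_{k-2},N_{k-1}\rangle$, combines with $\langle\nabla\log\rho_{k-2},\eta_{k-1}\rangle$ into $\frac{1}{\sin\bar\theta_{k-1}}\langle\nabla\log\rho_{k-2},\eta_{k-2}\rangle$. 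So the weight terms do not cancel level by level. They move up one level per step and vanish only at the top, because $\rho_0\equiv 1$.

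No purely geometric identity can replace this. Expanding $H_{\partial\Sigma_{k-1}}$ in terms of $H_{\partial\Sigma_{k-2}}$ leaves $\frac{1}{\sin\bar\theta_{k-1}}A_{\partial\Sigma_{k-2}}(\nu_{k-1},\nu_{k-1})-\cot\bar\theta_{k-1}A_{\Sigma_{k-1}}(\eta_{k-1},\eta_{k-1})$. This has no sign, and only the boundary condition of $f_{k-1}$ removes it. The curvature of $\partial\Sigma_{k-1}$ inside $\partial\Sigma_{k-2}$, which your sketch decomposes, never appears at all.

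Finally, running this recursion divides $-\partial_{\nu_i}\bar\theta_i/\sin\bar\theta_i$ by $\sin\bar\theta_{i+1}\cdots\sin\bar\theta_n$. Relative to the common denominator, the coefficient is therefore $\prod_{j<i}\sin\bar\theta_j$. This matches Proposition \ref{prop: Angle Identities} and the $n=2$ expression used in Theorem \ref{thm: rigidity of Gamma}. The $\prod_{j\le i}$ in the statement is an index slip, which your sketch echoes rather than derives.
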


\begin{proof}
    We denote the bottom slice as $\Sigma_n$, the weighted functional we consider is 
    \[
    \mathcal A_{\bar{\theta}_n}(U_n):=\int_{\Sigma_n}f_1\cdots f_{n-1}-\int_{\partial U_n\cap \partial \Sigma_{n-1}}f_1\cdots f_{n-1}\cos\bar{\theta}_n,
    \]
    where $f_i$ is the first eigen-function of the non-trivial minimizer $\Sigma_i$.

     Let $\rho_{k}=f_1\cdots f_{k-1}$, where $\rho_1=1$, $k\in\{1,\cdots,n\}$.
    By the second variation formula of $\mathcal A_{\bar{\theta}_n}$, and we take $\varphi=\rho_n^{-1/2}$, we have
    \begin{align*}
        \int_{\Sigma_n}-&\frac{3}{4}|\nabla^{\Sigma_n}\log \rho_n|^2+\rho_n^{-1}\Delta_{\Sigma_{n-1}}\rho_n-\left(\|A_{\Sigma_n}\|^2+\text{Ric}_{\Sigma_{n-1}}(N_n,N_n)\right)\\
        &-\int_{\partial\Sigma_n}\left(\rho_n^{-1}\frac{\partial \rho_n}{\partial\eta_n}+Q\right),
    \end{align*}
    where 
    \begin{align*}
    Q&=\frac{1}{\sin\bar{\theta}_n}A_{\partial\Sigma_{n-1}}(\nu_n,\nu_n)-\cot\bar{\theta}_nA_{\Sigma_n}(\eta_n,\eta_n)+\frac{1}{\sin^2\bar{\theta}_n}\partial_{\nu_n}\cos\bar{\theta}_n\\
    &=-H_{\Sigma_n}\cot\bar{\theta}_n+\frac{H_{\partial\Sigma_{n-1}}}{\sin\bar{\theta}_n}-\kappa_{\partial\Sigma_n}+\frac{1}{\sin^2\bar{\theta}_n}\partial_{\nu_n}\cos\bar{\theta}_n\\
    &=\cot\bar{\theta}_n\rho_n^{-1}\langle \nabla \rho_n,N_n\rangle+\frac{H_{\partial\Sigma_{n-1}}}{\sin\bar{\theta}_n}-\kappa_{\partial\Sigma_n}+\frac{1}{\sin^2\bar{\theta}_n}\partial_{\nu_n}\cos\bar{\theta}_n.
    \end{align*}
In the last equality, we used the first variation formula of $\Sigma_n$.

Note that the second variation formula (with a proper test function plugged in) contains two terms: the interior integral term $I$ and the boundary integral term $B$. We first have the following identity on the boundary integral term $B$.

\begin{align*}
    B&=\int_{\partial\Sigma_n}\left(\rho_n^{-1}\frac{\partial \rho_n}{\partial \eta_n}+Q\right)\\
    &=\int_{\partial\Sigma_n} \frac{1}{\sin\bar{\theta}_n}\langle \nabla \log \rho_n,\sin\bar{\theta}_n\eta_n+\cos\bar{\theta}_nN_n\rangle+\frac{H_{\partial\Sigma_{n-1}}}{\sin\bar{\theta}_n}-\kappa_{\partial\Sigma_n}+\frac{1}{\sin^2\bar{\theta}_n}\partial_{\nu_n}\cos\bar{\theta}_n\\
    &=\int_{\partial\Sigma_n}\frac{1}{\sin\bar{\theta}_n}\left(\langle \nabla\log \rho_n,\eta_{n-1}\rangle+H_{\partial \Sigma_{n-1}}\right)-\kappa_{\partial\Sigma_n}+\frac{1}{\sin^2\bar{\theta}_n}\partial_{\nu_n}\cos\bar{\theta}_n\\
    &=\int_{\partial\Sigma_n}\frac{1}{\sin\bar{\theta}_n}\left(\langle \nabla \log \rho_{n-1},\eta_{n-1}\rangle +\langle \nabla \log f_{n-1},\eta_{n-1}\rangle +H_{\partial\Sigma_{n-1}}\right)-\kappa_{\partial\Sigma_n}+\frac{1}{\sin^2\bar{\theta}_n}\partial_{\nu_n}\cos\bar{\theta}_n.
\end{align*}
Since $f_{n-1}$ is the first eigenfunction of the stability operator of $\Sigma_{n-1}$, it satisfies
\begin{align*}
f_{n-1}^{-1}\frac{\partial f_{n-1}}{\partial \eta_{n-1}}&=-H_{\Sigma_{n-1}}\cot\bar{\theta}_{n-1}+\frac{H_{\partial\Sigma_{n-2}}}{\sin\bar{\theta}_{n-1}}-H_{\partial_{\Sigma_{n-1}}}+\frac{1}{\sin^2\bar{\theta}_{n-1}}\partial_{\nu_{n-1}}\cos\bar{\theta}_{n-1}\\
&=\cot\bar{\theta}_n\langle \nabla\log \rho_{n-1},N_{n-1}\rangle+\frac{H_{\partial\Sigma_{n-2}}}{\sin\bar{\theta}_{n-1}}-H_{\partial_{\Sigma_{n-1}}}+\frac{1}{\sin^2\bar{\theta}_{n-1}}\partial_{\nu_{n-1}}\cos\bar{\theta}_{n-1}.
\end{align*}
Plugging in the identity satisfied by $f_{n-1}$, we have
\begin{align*}
    B=&\int_{\partial\Sigma_n}\frac{1}{\sin\bar{\theta}_n}\left(\langle \nabla \log \rho_{n-1},\eta_{n-1}\rangle +\cot\bar{\theta}_{n-1}\langle\nabla\log \rho_{n-1},N_{n-1}\rangle +\frac{H_{\partial \Sigma_{n-2}}}{\sin\bar{\theta}_{n-1}}+\frac{1}{\sin^2\bar{\theta}_{n-1}}\partial_{\nu_{n-1}}\cos\bar{\theta}_{n-1}\right)\\
    &\int_{\partial\Sigma_n}-\kappa_{\partial\Sigma_n}+\frac{1}{\sin^2\bar{\theta}_n}\partial_{\nu_n}\cos\bar{\theta}_n\\
    =&\int_{\partial\Sigma_n}\frac{1}{\sin\bar{\theta}_n\sin\bar{\theta}_{n-1}}\left(\langle \nabla \log \rho_{n-1},\sin\bar{\theta}_{n-1}\eta_{n-1}+\cos\bar{\theta}_{n-1}N_{n-1}\rangle +H_{\partial\Sigma_{n-2}}-\partial_{\nu_{n-1}}\bar{\theta}_{n-1}\right)\\
    &\int_{\partial\Sigma_n}-\kappa_{\partial\Sigma_n}-\frac{1}{\sin\bar{\theta}_n}\partial_{\nu_n}\bar{\theta}_n\\
    =&\int_{\partial\Sigma_n}\frac{1}{\sin\bar{\theta}_n\sin\bar{\theta}_{n-1}}\left(\langle \nabla \log \rho_{n-2},\eta_{n-2}\rangle+\langle \nabla \log f_{n-2},\eta_{n-2}\rangle +H_{\partial\Sigma_{n-2}}\right)-\int_{\partial\Sigma_n}\kappa_{\partial\Sigma_n}\\
    &-\int_{\partial\Sigma_{n}}\left(\frac{1}{\sin\bar{\theta}_n}\partial_{\nu_n}\bar{\theta}_n+\frac{1}{\sin\bar{\theta}_n\sin\bar{\theta}_{n-1}}\partial_{\nu_{n-1}}\bar{\theta}_{n-1}\right)
\end{align*}
Similarly, since $f_{n-2}$ is the first eigenfunction of the stability operator of $\Sigma_{n-2}$, it satisfies that
\begin{align*}
    f_{n-2}^{-1}\frac{\partial f_{n-2}}{\partial\eta_{n-2}}=&-H_{\Sigma_{n-2}}\cot\bar{\theta}_{n-2}+\frac{H_{\partial\Sigma_{n-3}}}{\sin\bar{\theta}_{n-2}}-H_{\partial\Sigma_{n-3}}-\frac{1}{\sin\bar{\theta}_{n-2}}\partial_{\nu_{n-1}}\bar{\theta}_{n-2}.
\end{align*}
Plugging in the identity and continue the above process till the first slice, we have that
\begin{align}
    B=\int_{\partial\Sigma_n}\frac{1}{\Pi_{i=1}^n\sin\bar{\theta_i}}\left(H_{\partial M}-\sum_{i=1}^n\Pi_{j\leq i}\sin\bar{\theta}_j\partial_{\nu_i}\bar{\theta}_i\right)-\int_{\partial\Sigma_n}\kappa_{\partial\Sigma_n}.
\end{align}

By Schoen-Yau's minimal slicing technique \cite{schoen2017positive}, we have that

\begin{align*}
    I\leq \int_{\Sigma_n}-\frac{1}{2}\left(R_g+\sum_{i=1}^n\|A_{\Sigma_i}\|^2+\sum_{i=1}^nH_{\Sigma_i}^2\right)+\int_{\Sigma_n}K_{\Sigma_n}.
\end{align*}

The proof is now complete.
\end{proof}
\subsection{Angle Identities}
Suppose $e$ is a vector of $T_p\Sigma$ at the point $p$. Assume further that $p\in \partial\bar{\Sigma}_t$. We prove the following identities which reveal the relation between the second fundamental form of $\partial M$ with respect to the Euclidean metric and the directional derivatives of the angle functions.

\begin{prop}[Angle Identities]\label{prop: Angle Identities}
Suppose $e$ is any vector in the tangent space $T_p\partial M$, for any $k\in \{1,2,\cdots,n\}$, we have that
\begin{align}
    \Pi_{i=0}^{k-1}\sin\bar{\theta}_i\partial_{e}\bar{\theta}_k=\RN{2}(e,\bar{\nu}_k),
\end{align}
    where we define $\sin\bar{\theta}_0=1$.
\end{prop}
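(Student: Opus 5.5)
We would prove the identity by a direct computation in the Euclidean domain $\bar{M}$, where $\partial_e\bar{\theta}_k$ and $\RN{2}$ live. We identify $e\in T_p\partial M$ with $df(e)\in T_{f(p)}\partial\bar M$ through the boundary diffeomorphism, since the angle functions on $\partial M$ are pullbacks. We work on $\partial\bar M$ away from the critical points $p_k^\pm$, where all the frames are defined.

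Let $\bar X$ be the outward unit normal of $\partial\bar M$, so that $\RN{2}(e,v)=\langle D_e\bar X,v\rangle$ for $v\in T\partial\bar M$. Let $P_{k}$ be the linear subspace spanned by $e_1,\dots,e_{n+2-k}$, i.e.\ the direction of the codimension-$k$ slices, with $P_0=\mathbb R^{n+2}$, and let $\pi_k$ be the orthogonal projection onto $P_k$. Write $e_{(k)}:=e_{n+3-k}$ for the coordinate direction used at step $k$. Set $s_k:=\prod_{i=0}^{k}\sin\bar\theta_i$.

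The first step is the inductive description
\[
\bar\eta_{k}=\frac{\pi_{k}\bar X}{|\pi_k\bar X|},\qquad |\pi_k\bar X|=s_k .
\]
This holds because $\partial\bar\Sigma_k=\partial\bar M\cap(\text{affine }P_k)$, so the outer normal of $\partial\bar\Sigma_k$ inside $\bar\Sigma_k$ is the normalized projection of $\bar X$. Moreover $\pi_k\bar X=\pi_k(\pi_{k-1}\bar X)$ removes the $e_{(k)}$-component of $s_{k-1}\bar\eta_{k-1}$. That component is $s_{k-1}\cos\bar\theta_k$, so $|\pi_k\bar X|=s_{k-1}\sin\bar\theta_k$. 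In particular $\langle \bar X,e_{(k)}\rangle=s_{k-1}\cos\bar\theta_k$.

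Next we differentiate $\cos\bar\theta_k=\langle\bar\eta_{k-1},e_{(k)}\rangle$ along $e$. Differentiating the normalized projection gives
\[
D_e\bar\eta_{k-1}=\frac{1}{s_{k-1}}\Big(\pi_{k-1}D_e\bar X-\langle D_e\bar X,\bar\eta_{k-1}\rangle\bar\eta_{k-1}\Big).
\]
Since $e_{(k)}\in P_{k-1}$, this yields
\[
-\sin\bar\theta_k\,\partial_e\bar\theta_k=\frac{1}{s_{k-1}}\langle D_e\bar X,w\rangle,\qquad w:=e_{(k)}-\cos\bar\theta_k\,\bar\eta_{k-1}.
\]

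The key geometric point is to identify $w$. It lies in $P_{k-1}$, is orthogonal to $\bar\eta_{k-1}$, and has length $\sin\bar\theta_k$. It is also orthogonal to $\bar X$, because $\langle\bar X,w\rangle=s_{k-1}\cos\bar\theta_k-\cos\bar\theta_k\langle\bar X,\bar\eta_{k-1}\rangle=0$ using $\langle\bar X,\bar\eta_{k-1}\rangle=s_{k-1}$. Hence $w\in T\partial\bar\Sigma_{k-1}$. Furthermore $w\perp T\partial\bar\Sigma_k$, since the latter lies in $P_k\perp e_{(k)}$ and in $T\partial\bar\Sigma_{k-1}\perp\bar\eta_{k-1}$. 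Therefore $w=\pm\sin\bar\theta_k\,\bar\nu_k$.

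Because $w$ is tangent to $\partial\bar M$, $\langle D_e\bar X,w\rangle=\pm\sin\bar\theta_k\,\RN{2}(e,\bar\nu_k)$. Dividing by $\sin\bar\theta_k\neq0$, which holds away from the critical points, gives $s_{k-1}\partial_e\bar\theta_k=\RN{2}(e,\bar\nu_k)$ up to sign. This is exactly $\prod_{i=0}^{k-1}\sin\bar\theta_i\,\partial_e\bar\theta_k=\RN{2}(e,\bar\nu_k)$. The case $k=1$ is the same computation with $s_0=1$ and $\bar\eta_0=\bar X$.

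The main obstacle we expect is bookkeeping rather than analysis. We must fix the orientation of $\bar\nu_k$, the outer conormal pointing toward increasing $x_{n+3-k}$ versus decreasing, consistently with the sign conventions in $Q_k$ from Propositions \ref{prop: first slicing variation} and \ref{prop: second slicing variation formula}, so that the sign comes out as stated. We must also check that the inductive formula $\bar\eta_k=\pi_k\bar X/|\pi_k\bar X|$ is valid: this requires $\pi_k\bar X\neq 0$, which holds exactly off the critical points $p_k^\pm$. We would check the sign once at $k=1$ via $e_{n+2}^{T}=-\sin\bar\theta_1\bar\nu_1$ and propagate it through the induction.
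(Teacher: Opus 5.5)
Your proof is correct, and it takes a different route from the paper's.

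\textbf{The paper's route.} It works entirely with the moving orthonormal frames. First it writes $e_{(k)}=\cos\bar\theta_k\bar\eta_{k-1}-\sin\bar\theta_k\bar\nu_k$, which gives $\partial_e\bar\theta_k=\langle\nabla_e\bar\eta_{k-1},\bar\nu_k\rangle$. It then proves an auxiliary lemma, $\sin\bar\theta_{m-1}\langle\nabla_e\bar\eta_{m-1},\bar\nu_k\rangle=\langle\nabla_e\bar\eta_{m-2},\bar\nu_k\rangle$ for $k\ge m$. The lemma comes from the recursions $\bar\eta_{m-1}=\sin\bar\theta_{m-1}\bar\eta_{m-2}+\cos\bar\theta_{m-1}\bar\nu_{m-1}$ and $\bar\nu_{m-1}=\cos\bar\theta_{m-1}\bar\eta_{m-1}-\sin\bar\theta_{m-1}e_{(m-1)}$, together with the orthogonality of $\bar\nu_k$ to the earlier $\bar\eta$'s and $\bar\nu$'s. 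Applying the lemma $k-1$ times peels off one sine factor at a time, until only $\langle\nabla_e\bar X,\bar\nu_k\rangle=\RN{2}(e,\bar\nu_k)$ remains.

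\textbf{Your route.} You replace that iteration by the closed form $\bar\eta_{k-1}=\pi_{k-1}\bar X/s_{k-1}$. You differentiate the normalized projection once and pair with $\bar\nu_k\in P_{k-1}\cap\bar\eta_{k-1}^{\perp}$. This gives $\langle D_e\bar\eta_{k-1},\bar\nu_k\rangle=s_{k-1}^{-1}\langle D_e\bar X,\bar\nu_k\rangle$ in one step, which is exactly what the paper's iterated lemma amounts to.

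\textbf{Comparison.} Your argument is shorter and makes the orthogonality relations automatic. As a by-product, $|\pi_k\bar X|=s_k$ is the content of the paper's separate lemma $\Pi_{i=1}^m\sin\bar\theta_i=|\mathcal X_{n-m}|/|\mathcal X_n|$ in the winding-number computation, since $\mathcal X_{n-m}$ is $\mathcal X_n$ with its last $m$ components deleted. The paper's frame recursion stays within the orthonormal-frame formalism used in the slicing computations of Proposition \ref{prop: n-th slicing}.

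\textbf{On the sign.} No propagation through an induction is needed. The convention $w=-\sin\bar\theta_k\bar\nu_k$ is imposed at each level separately, and it is precisely the decomposition of $e_{(k)}$ the paper uses. With it, your computation gives the identity with the stated sign.
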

\begin{proof}
    Note that
    \begin{align*}
        \partial_{e}\cos\bar{\theta}_k=&\langle \nabla_{e}\bar{\eta}_{k-1},e_k\rangle\\
        =&\langle \nabla_e\bar{\eta}_{k-1},\cos\bar{\theta}_k\bar{\eta}_{k-1}-\sin\bar{\theta}_k\bar{\nu}_k\rangle\\
        =&-\sin\bar{\theta}_k\langle \nabla_{e}\bar{\eta}_{k-1},\bar{\nu}_k\rangle,
    \end{align*}
     which gives
     \[
     \partial_{e}\bar{\theta}_k=\langle \nabla_e\bar{\eta}_{k-1},\bar{\nu}_k\rangle.
     \]
    
    We prove the following Lemma
    \begin{lem}\label{lem: angle induction}
        For $m\geq 1$, $k\geq m$ we have that
        \begin{align}
            \sin\bar{\theta}_{m-1}\langle \nabla_e\bar{\eta}_{m-1},\bar{\nu}_k\rangle =\langle \nabla_{e}\bar{\eta}_{m-2},\bar{\nu}_k\rangle,
        \end{align}
        we define $\bar{\eta}_0=\bar{X}$.
    \end{lem}
    \begin{proof}
        Note that 
        \begin{align}
            \bar{\eta}_{m-1}=\sin\bar{\theta}_{m-1}\bar{\eta}_{m-2}+\cos\bar{\theta}_{m-1}\bar{\nu}_{m-1}.
        \end{align}
        We look at the term
        \begin{align}
            \langle \nabla_e\bar{\nu}_{m-1},\bar{\nu}_k\rangle&=\langle \nabla_e\left(\cos\bar{\theta}_{m-1}\bar{\eta}_{m-1}-\sin\bar{\theta}_{m-1}e_m\right),\bar{\nu}_k\rangle\\
           \nonumber &=\cos\bar{\theta}_{m-1}\langle \nabla_e\bar{\eta}_{m-1},\bar{\nu}_k\rangle.
        \end{align}
        We have that
        \begin{align*}
            \langle \nabla_e\bar{\eta}_{m-1},\bar{\nu}_k\rangle=&\sin\bar{\theta}_{m-1}\langle \nabla_e\bar{\eta}_{m-2},\bar{\nu}_k\rangle+\cos^2\bar{\theta}_{m-1}\langle \nabla_{e}\bar{\eta}_{m-1},\bar{\nu}_k\rangle.
        \end{align*}
        Moving the $\cos^2\bar{\theta}_{m-1}\langle \nabla_e\bar{\eta},\bar{\nu}_k\rangle$ term to the left hand side,the identity follows easily.
    \end{proof}
    With Lemma \ref{lem: angle induction}, we have that
    \begin{align*}
        \Pi_{i=0}^{k-1}\sin\bar{\theta}_i\partial_e\bar{\theta}_k=&\langle \nabla_e\bar{X},\bar{\nu}_k\rangle\\
        =&\RN{2}(e,\bar{\nu}_k).
    \end{align*}
\end{proof}

\section{Local estimates} \label{section: local estimates}

Suppose $\Sigma$ is the non-trivial minimizer of the capillary functional $\mathcal {A}_{\bar{\theta}_1}$, we have the following local estimates on $\partial\Sigma$. We define $F: T_{p}\partial M \rightarrow T_{p}\partial M$ to be an isomorphism which sends $\tau_{i}$ to $\bar{\tau}_{i}$ for $i\{1,2,\cdots,n\}$ below and positive curvature space $\RN{2}_{+}$ by $\RN{2}_{+}(p):=\operatorname{span}\{\,v\in T_p\partial M:\ \RN{2}(v,v)>0\,\}$ which is a subspace of $T_{p}M$.

\begin{prop}\label{prop:local estimates}
For any $p\in \partial\Sigma$, suppose that $\{\tau_i\}_{i=1}^n$ is an orthonormal frame of $T_p \partial M$ with respect to $g$. Assume further that $p\in\partial\bar{\Sigma}_t$ for some $t$, and $\{\bar{\tau}_i\}_{i=1}^n$ is an orthonormal frame of $T_p \partial M$, with respect to $g_E$, then
\begin{align}\label{eq: local estimate}
    H_{\partial M}\geq \sum_{i=1}^n\RN{2}(\tau_i,\bar{\tau}_i),
\end{align}
where $H_{\partial M}^{2} g|_{F^{\#}\RN{2}_{+}} = \bar{H}_{\partial M}^{2} g_{E}|_{\RN{2}_{+}}$ when the equality holds. 

\begin{proof}
   Since the inequality is symmetric, we can rotate $\{\bar{\tau}_i\}_{i=1}^n$ appropriately and take the matrix $(a_{ij})_{1 \le i,j \le n} = (\RN{2}(\bar{\tau}_i,\bar{\tau}_j))_{1 \le i,j \le n}$ to be a diagonal matrix. We rotate $\{\tau_i\}_{i=1}^n$ simultaneously with $\{\bar{\tau}_i\}_{i=1}^n$. More specifically there exists an orthogonal matrix $(O_{ij})_{n\times n}\in SO(n)$ such that $\tau_{i}' = O_{ij}\tau_{j}$ and $\bar{\tau}_{i}' = O_{ij} \bar{\tau}_{j}$ and we replace $\tau_{i}$ and $\bar{\tau}_{i}$ by $\tau_{i}'$ and $\bar{\tau}_{i}'$, respectively. Note that $\{\tau_i\}_{i=1}^n$ is preserved to be an orthonormal set in metric $g$ since a rotation is represented by an orthogonal matrix.
 \\
        Let
    \[
    W_i=\frac{a_{ii}}{\bar{H}_{\partial M}}
    \]
    for $i\in\{1,2,\cdots,3\}$ and note that $\sum_{i=1}^nW_i=1$. By the boundary comparison condition, we have
\begin{align}
  \label{threepositivemccomparison} H_{\partial M}\geq& \sum_{i=1}^nW_i\bar{H}_{\partial M} g_E(\tau_i,\bar{\tau}_{i}) \\
  \nonumber &= \sum_{i=1}^na_{ii} g_E(\tau_i,\bar{\tau}_{i})\\
    &=\sum_{i=1}^n\RN{2}(\tau_i,\bar{\tau}_i) \label{lem2.1diag},
    \end{align}
where (\ref{threepositivemccomparison}) gives $H_{\partial M}^{2} g|_{F^{\#}\RN{2}_{+}} = \bar{H}_{\partial M}^{2} g_{E}|_{\RN{2}_{+}}$ (\ref{lem2.1diag}) follows from the coordinate decomposition and the fact that $(a_{ij})$ is a diagonal matrix.
Also note that
\begin{align*}
    \sum_{i=1}^n\RN{2}(O_{ij}\tau_j,O_{ik}\bar{\tau}_k)&= \sum_{i=1}^nO_{ij}O_{ik}\RN{2}(\tau_j,\bar{\tau}_k) \\ &= \delta_{jk} \RN{2}(\tau_j,\bar{\tau}_k) \\ &= \sum_{i=1}^n\RN{2}(\tau_i,\bar{\tau}_i).
\end{align*}
Hence the right hand side sum is an invariant over rotations. The equality condition also follows from the rotation.
\end{proof}

\end{prop}

As a corollary, we obtain the key boundary estimate.
\begin{cor}[Key boundary estimate]\label{cor: key estmate}
    For any $p\in\partial\Sigma_n$, we have that
    \begin{align}
       H_{\partial M}-\sum_{i=1}^n\Pi_{j\leq i}\sin\bar{\theta}_j\partial_{\nu_i}\bar{\theta}_i\geq \RN{2}(\tau_1,\bar{\tau}_1).
    \end{align}
\end{cor}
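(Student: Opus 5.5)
Combine the angle identities (Proposition \ref{prop: Angle Identities}) with the local estimate (Proposition \ref{prop:local estimates}), using a frame of $T_p\partial M$ adapted to the slicing.

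\textbf{Step 1 (angle terms as $\RN{2}$).} Fix $p\in\partial\Sigma_n$. Apply Proposition \ref{prop: Angle Identities} with $e=\nu_i$ for each $i$:
\[
\Pi_{j\le i-1}\sin\bar\theta_j\,\partial_{\nu_i}\bar\theta_i=\RN{2}(\nu_i,\bar\nu_i).
\]
The weights in the corollary run up to $j\le i$, not $j\le i-1$. So first I will check the normalization: either absorb the extra factor $\sin\bar\theta_i$ into the convention for $\nu_i$ (e.g.\ $\nu_i$ versus the projected conormal $\sin\bar\theta_i\,\nu_i$ in the chain relating $\eta_i,\nu_i,N_i$), or recompute the telescoping in Proposition \ref{prop: n-th slicing}. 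Whichever convention makes the two formulas consistent, the expected outcome is
\[
\sum_{i=1}^n\Pi_{j\le i}\sin\bar\theta_j\,\partial_{\nu_i}\bar\theta_i=\sum_{i=1}^n \RN{2}(\tau_{i+1},\bar\tau_{i+1}),
\]
with $\tau_{i+1}:=\nu_i$ and $\bar\tau_{i+1}:=\bar\nu_i$. This matching of weights is the step I expect to be the main obstacle. It needs the orthogonal decompositions $\eta_{k-1}=\sin\bar\theta_k\eta_k+\cos\bar\theta_kN_k$ and $\bar\eta_{k-1}=\cos\bar\theta_k\bar\eta_k-\sin\bar\theta_k\bar\nu_k$ (from the first variation and the Euclidean construction) to be carried through consistently.

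\textbf{Step 2 (orthonormal frames).} Check that the frames are orthonormal. The vectors $\nu_1,\dots,\nu_n$ are $g$-orthonormal in $T_p\partial M$, since $\nu_k$ is the unit conormal of $\partial\Sigma_k$ in $\partial\Sigma_{k-1}$ and these form a nested flag $T_p\partial\Sigma_n\subset\cdots\subset T_p\partial M$. Complete them by $\tau_1$, a $g$-unit tangent vector of the curve $\partial\Sigma_n$ at $p$. Likewise $\bar\nu_1,\dots,\bar\nu_n$ are $g_E$-orthonormal, being conormals of the Euclidean flag $\partial\bar\Sigma_n\subset\cdots\subset\partial\bar M$ through $p$. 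Complete them by $\bar\tau_1$, the $g_E$-unit tangent of $\partial\bar\Sigma_{n,t}$ at $p$. Here I use that $p$ lies on the Euclidean slice with the same coordinate values, which is how the angle functions are evaluated at $p$.

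\textbf{Step 3 (apply the local estimate).} Apply Proposition \ref{prop:local estimates} to the frames $\{\tau_1,\nu_1,\dots,\nu_n\}$ and $\{\bar\tau_1,\bar\nu_1,\dots,\bar\nu_n\}$ of the $(n+1)$-dimensional space $T_p\partial M$:
\[
H_{\partial M}\ge \RN{2}(\tau_1,\bar\tau_1)+\sum_{i=1}^n\RN{2}(\nu_i,\bar\nu_i).
\]
Subtracting the identity from Step 1 gives the claimed inequality
\[
H_{\partial M}-\sum_{i=1}^n\Pi_{j\le i}\sin\bar\theta_j\,\partial_{\nu_i}\bar\theta_i\ \ge\ \RN{2}(\tau_1,\bar\tau_1).
\]

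\textbf{Remaining check.} Orientation signs (outer versus inner conormals) must agree between $\nu_i$ and $\bar\nu_i$, so that each term enters with a plus sign. I expect this to follow from both conormals pointing toward increasing coordinate, matching the choice of the points $p_k$ at maximal coordinate.
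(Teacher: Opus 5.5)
Your route is the paper's: its proof of this corollary consists of citing Proposition~\ref{prop: Angle Identities} and Proposition~\ref{prop:local estimates}. These are applied exactly as in your Steps 1--3, to the frames $\{\tau_1,\nu_1,\dots,\nu_n\}$ and $\{\bar\tau_1,\bar\nu_1,\dots,\bar\nu_n\}$ of the $(n+1)$-dimensional space $T_p\partial M$.

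The weight mismatch you flagged is a typo in the statement. Resolve it with your second option (recomputing the telescoping), not the first. In the proof of Proposition~\ref{prop: n-th slicing}, the $i$-th angle term comes out as $-\bigl(\Pi_{j=i}^{n}\sin\bar\theta_j\bigr)^{-1}\partial_{\nu_i}\bar\theta_i$; for instance, the last one is $-\frac{1}{\sin\bar\theta_n}\partial_{\nu_n}\bar\theta_n$. After factoring out $\bigl(\Pi_{j=1}^{n}\sin\bar\theta_j\bigr)^{-1}$, the weight is therefore $\Pi_{j<i}\sin\bar\theta_j$. This agrees with the $n=2$ expression $H_{\partial M}-\partial_{\nu}\bar\theta_1-\sin\bar\theta_1\partial_{\tau_2}\bar\theta_2$ used later in the rigidity analysis.

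With $\Pi_{j<i}$ in place of $\Pi_{j\le i}$, your Step 1 is literally the angle identity with $e=\nu_i$, and no renormalization is needed. Your first option, replacing $\nu_i$ by $\sin\bar\theta_i\,\nu_i$, should be discarded: it destroys the $g$-orthonormality that Step 3 requires. The version with $\Pi_{j\le i}$ is not what these two results yield.

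Your final orientation check is unnecessary. Proposition~\ref{prop:local estimates} imposes no relation between the $g$-orthonormal frame and the $g_E$-orthonormal frame. Flipping the sign of any single $\nu_i$, $\bar\nu_i$, $\tau_1$ or $\bar\tau_1$ keeps both frames orthonormal, so the inequality holds whatever the signs are.
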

\begin{proof}
    It follows from Lemma \ref{prop: Angle Identities} and Proposition \ref{prop:local estimates}.
\end{proof}
\subsection{Winding  Number}
Since $\bar{M}$ is a convex domain in $\mathbb R^{n+2}$, we parametrize $\partial \bar{M}$ as 
        \begin{align}
            \Psi(\vec{u})=\Psi(u_1,\cdots, u_{n+1}):=(x(\vec{u}),y(\vec{u}),u_2,u_3,\cdots, u_{n+1}).
        \end{align}
  \begin{rem}
      To obtain a winding number, we need to slice the $(n+2)$-dim domain $n$-times.
  \end{rem}      
\begin{prop} \label{prop: winding number}
\begin{align}
    \int_{\partial\Sigma_n}\frac{1}{\Pi_{i=1}^n\sin\bar{\theta_i}}\left(H_{\partial M}-\sum_{i=1}^n\Pi_{j\leq i}\sin\bar{\theta}_j\partial_{\nu_i}\bar{\theta}_i\right)\geq 2k\pi,
    \end{align}
    for some $k\in\mathbb N_+$.
\end{prop}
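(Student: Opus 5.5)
We use the key boundary estimate, Corollary \ref{cor: key estmate}, to reduce the integrand to a pointwise quantity on $\partial\Sigma_n$ that has a Gauss-map interpretation. For each $p\in\partial\Sigma_n$ we pick the frames in Proposition \ref{prop:local estimates} adapted to the slicing. Let $\tau_1$ be the $g$-unit tangent of the curve $\partial\Sigma_n$ at $p$, and let the remaining $\tau_i$ be the successive conormals $\nu_n,\dots,\nu_1$. The Euclidean frame $\bar\tau_i$ is built the same way, with $\bar\tau_1$ the Euclidean unit tangent of the curve $\partial\bar\Sigma_{n,t_1,\dots,t_n}$ through $p$ and the others the conormals $\bar\nu_k$. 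With these choices, Corollary \ref{cor: key estmate} gives
\begin{align*}
\int_{\partial\Sigma_n}\frac{1}{\Pi_{i=1}^n\sin\bar{\theta}_i}\left(H_{\partial M}-\sum_{i=1}^n\Pi_{j\leq i}\sin\bar{\theta}_j\partial_{\nu_i}\bar{\theta}_i\right)\geq \int_{\partial\Sigma_n}\frac{\RN{2}(\tau_1,\bar\tau_1)}{\Pi_{i=1}^n\sin\bar\theta_i}\,ds_g .
\end{align*}
It therefore suffices to show that the right-hand side is at least $2\pi$ times a positive integer on each component of $\partial\Sigma_n$.

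Next we interpret the right-hand side through the parametrization $\Psi(u_1,\dots,u_{n+1})=(x(\vec u),y(\vec u),u_2,\dots,u_{n+1})$ of $\partial\bar M$. On the Euclidean slice where all the coordinates $u_2,\dots,u_{n+1}$ are frozen, $\partial\bar\Sigma_n$ is a convex planar curve in the $(x,y)$-plane. Its unit outward normal $\bar\eta_n$ defines an angle $\phi\in S^1$. We compute $d\phi$ along an arbitrary curve in $\partial\bar M$ in terms of $\RN{2}$. Applying the angle identities (Proposition \ref{prop: Angle Identities}) iteratively, and using that $\bar X=\Pi\sin\bar\theta_i\,\bar\eta_n+(\text{components along } e_3,\dots,e_{n+2})$, we expect the derivative of $\phi$ in the direction $e$ to be
\begin{align*}
\partial_e\phi=\frac{\RN{2}(e,\bar\tau_1)}{\Pi_{i=1}^n\sin\bar\theta_i}.
\end{align*}
Taking $e=\tau_1$, the integrand becomes $\phi^*(d\phi)$ restricted to $\partial\Sigma_n$, and the integral is $2\pi\deg(\phi|_{\partial\Sigma_n})$. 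This is the same identity as in the three-dimensional argument of \cite{koyao}, now carried through $n$ slices.

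It remains to show that the degree is a positive integer, and we expect this to be the main obstacle. The quantity $\phi$ is well defined away from the poles where some $\sin\bar\theta_i$ vanishes. We must check that $\partial\Sigma_n$ avoids these poles, or at least that the integrand stays integrable there. We will use that each $\Sigma_k$ is a nontrivial capillary minimizer containing the chosen point $p_k$, together with the maximum-coordinate choice of $p_k$ from the construction in Section 2. Once the poles are excluded, $\phi$ extends to the region of $\partial\Sigma_{n-1}$ bounded by each component of $\partial\Sigma_n$, minus the pole, so the degree equals the linking of the component with the pole. To get positivity we intend to use a sign argument: $\bar\theta_i\in(0,\pi)$, and the integrand is bounded below by a quantity of fixed sign. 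Convexity gives $\RN{2}\ge0$, and strict convexity is assumed in Section 2. Strict convexity makes the positivity of $\RN{2}(\tau_1,\bar\tau_1)$ plausible once $\tau_1$ and $\bar\tau_1$ lie in a common half-space. The orientation induced from $\Sigma_n\ni p_n$ then ensures that the component winds at least once around the pole, so $k\ge1$. If orientation mismatches between $\tau_1$ and $\bar\tau_1$ occur, we would try replacing $\bar\tau_1$ by $-\bar\tau_1$ in the frame choice, which is allowed because Proposition \ref{prop:local estimates} holds for any orthonormal frames.
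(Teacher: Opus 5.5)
Your first two steps follow the paper's proof. The paper also reduces, via Corollary \ref{cor: key estmate}, to $\int_{\partial\Sigma_n}\RN{2}(\tau_1,\bar\tau_1)/\Pi_{i=1}^n\sin\bar\theta_i$. It then computes the normal $\mathcal{X}_n$ through determinants and proves $\Pi_{i\le m}\sin\bar\theta_i=|\mathcal{X}_{n-m}|/|\mathcal{X}_n|$. From this it concludes that the integrand is $\frac{d}{dt}\arctan(y_{u_1}/x_{u_1})$, the derivative of the tangent angle of the planar slice curve, which is your $\partial_{\tau_1}\phi$ up to a constant shift. Your ``expected'' formula is correct, and your decomposition proves it faster than the paper does. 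Write $\bar X=(\Pi_{i=1}^n\sin\bar\theta_i)\,\bar\eta_n+v$ with $v\in\operatorname{span}(e_3,\dots,e_{n+2})$, differentiate along $e$, and pair with the horizontal unit vector $\bar\tau_1\perp\bar\eta_n$. The $v$-term and the term carrying $\partial_e(\Pi_i\sin\bar\theta_i)$ drop out, leaving $\RN{2}(e,\bar\tau_1)=\Pi_{i=1}^n\sin\bar\theta_i\,\langle\nabla_e\bar\eta_n,\bar\tau_1\rangle$. It is this decomposition, not the angle identities, that does the work.

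The genuine gap is $k\ge1$. The paper gives no argument here either; it simply writes ``$=2k\pi$ for some $k\ge1$''. So you are not missing an idea the paper supplies, but your sketch does not close the gap.

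\begin{itemize}
\item Your half-space argument fails. $\langle\tau_1,\bar\tau_1\rangle_{g_E}>0$ does not imply $\RN{2}(\tau_1,\bar\tau_1)>0$ for non-umbilic $\RN{2}$: take $\RN{2}=\mathrm{diag}(1,\epsilon)$, $\tau_1\parallel(1,1)$, $\bar\tau_1\parallel(-\tfrac12,1)$. In any case, $\tau_1$ and $\bar\tau_1$ are unrelated directions away from the rigid case.
\item More fundamentally, pointwise positivity cannot give quantization. Flipping $\bar\tau_1$ pointwise turns the integral into the total variation $\int|d(\phi\circ\gamma)|$. That quantity is positive but only bounded below by $2\pi|\deg|$.
\item The sign freedom should be used componentwise instead. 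Proposition \ref{prop:local estimates} holds for arbitrary orthonormal frames, so you may flip $\bar\tau_1$ on each component $\gamma_j$ of $\partial\Sigma_n$. This gives a lower bound of $2\pi\sum_j|\deg(\phi\circ\gamma_j)|$ for the left side. In particular, your claim that every component contributes a positive multiple of $2\pi$ is unnecessary and false in general.
\end{itemize}

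What you actually need is that some $\gamma_j$ has nonzero linking number, in $\partial M\cong S^{n+1}$, with the pole set
$P=\{\Pi_{i=1}^n\sin\bar\theta_i=0\}=\{\bar X\in\operatorname{span}(e_3,\dots,e_{n+2})\}\cong S^{n-1}$.
This is a topological statement about the nested minimizers, and $p_n\in E_n$ plus an ``induced orientation'' does not yield it.

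For $n=1$ it holds if $p_1^+\in\partial^b\Omega$ and $p_1^-\notin\partial^b\Omega$. Then $\partial\Sigma_1$ is homologous in $S^2\setminus\{p_1^\pm\}$ to a small loop around $p_1^+$.

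For $n\ge2$, the total linking number of $\partial\Sigma_n=\partial(\partial^bU_n)$ with $P$ equals the algebraic count of $P\cap\partial^bU_n$. These are the points of $\partial\Sigma_{n-1}$ where $\sin\bar\theta_n=0$ that lie in $\partial^bU_n$. You would have to show that the minimizer captures an algebraically nonzero number of them, for example by containing $p_n$ while barriers exclude the others. Neither your proposal nor the paper does this.
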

\begin{proof}
    By Proposition \ref{prop: Angle Identities} and Proposition \ref{prop:local estimates}, we have that
    \begin{align*}
        \int_{\partial\Sigma_n}\frac{1}{\Pi_{i=1}^n\sin\bar{\theta_i}}\left(H_{\partial M}-\sum_{i=1}^n\Pi_{j\leq i}\sin\bar{\theta}_j\partial_{\nu_i}\bar{\theta}_i\right)dl_g\geq \int_{\partial\Sigma_n}\frac{\RN{2}(\tau_1,\bar{\tau}_1)}{\Pi_{i=1}^n\sin\bar{\theta}_i}dl_g
        \end{align*}
        Since $\bar{M}$ is a convex domain in $\mathbb R^n$, we parametrize $\partial \bar{M}$ as 
        \begin{align}
            \Psi(\vec{u})=\Psi(u_1,\cdots, u_{n+1}):=(x(\vec{u}),y(\vec{u}),u_2,u_3,\cdots, u_{n+1}),
        \end{align}
        and $\bar{\tau}_1$ is parallel to $\frac{\partial\Psi}{\partial u_1}$.
        Under this parametrization, we know that $\bar{X}$ is parallel to
        \begin{align*}
           \mathcal{X}_n=\det \left(\begin{matrix}
                \hat{i}_1 & \hat{i}_2 & \hat{i}_3 &\cdots &\hat{i}_{n+2}\\
                x_{u_1} & y_{u_1} & 0 &\cdots & 0\\
                x_{u_2} & y_{u_2} & 1 & \cdots & 0\\
                x_{u_3} &y_{u_3} &0 &\cdots & 0\\
                \vdots  & \vdots & \vdots &\ddots & \vdots\\
                x_{u_{n+1}} & y_{u_{n+1}} & 0 &\cdots & 1 
            \end{matrix}\right)
        \end{align*}
        Similarly, one define $\mathcal{X}_k$ for $k\in\{2,3,\cdots, n\}$, we use this definition to compute the determinant in a neat way.
        \begin{lem}
            \begin{align}\label{normalvecinduction1}
                \mathcal{X}_{n}=\mathcal{X}_{n-1}+c_{n+2}\hat{i}_{n+2}.
            \end{align}
        \end{lem}
        \begin{proof}
            For any $\hat{i}_{m}$ component of $\mathcal{X}_n$ when $m\in\{1,2,\cdots, n\}$, the coefficient is 
            \begin{align*}
                \det\left(\begin{matrix}
                    M_{n-1,m} & \vec{0}\\
                    \vec{v}_n & 1
                \end{matrix}\right),
            \end{align*}
            where $\vec{0}$ is a $(n-1)\times 1$ column whose entries are all zero,
            \[
            \vec{v}_n=(x_{u_{n+1}}, y_{u_{n+1}}, 0,\cdots, 0),
            \]
            and $\det M_{n-1,m}$ is the coefficient of $\hat{i}_m$ in $\mathcal{X}_{n-1}$. The equation \eqref{normalvecinduction1} follows.
        \end{proof}
        Our next step is to compute $c_n$.

       \begin{lem}
           \begin{align}
               c_{n+2}=x_{u_1}y_{u_{n+1}}-x_{u_{n+1}}y_{u_1}.
           \end{align}
       \end{lem}
       \begin{proof}
           Note that 
           \begin{align*}
               c_{n+2}&=(-1)^{n-1}\left((-1)^{n-2}x_{u_{n+1}}\det\left(\begin{matrix}
                   y_{u_1} & 0 & 0 &\cdots & 0\\
                   y_{u_2} & 1 & 0 & \cdots & 0\\
                   y_{u_3} & 0 & 1 & \cdots & 0\\
                   \vdots & \vdots & \vdots & \ddots & \vdots \\
                   y_{u_{n}} & 0  & 0 &\cdots& 1
               \end{matrix}\right)+(-1)^{n-1}y_{u_{n+1}}\det\left(\begin{matrix}
                   x_{u_1} & 0 & 0 &\cdots & 0\\
                   x_{u_2} & 1 & 0 & \cdots & 0\\
                   x_{u_3} & 0 & 1 & \cdots & 0\\
                   \vdots & \vdots & \vdots & \ddots & \vdots \\
                   x_{u_{n}} & 0  & 0 &\cdots& 1
               \end{matrix}\right)\right)\\
               &=x_{u_1}y_{u_{n+1}}-x_{u_{n+1}}y_{u_1}.
           \end{align*}
       \end{proof}
    We then have that
    \begin{align}
        \mathcal{X}_n=&\mathcal{X}_1+\sum_{m=3}^{n+2}\left(x_{u_1}y_{u_{m-1}}-x_{u_{m-1}}y_{u_1}\right)\hat{i}_m\\
        \nonumber=&y_{u_1}\hat{i}_1-x_{u_1}\hat{i}_2+\sum_{m=3}^{n+2}\left(x_{u_1}y_{u_{m-1}}-x_{u_{m-1}}y_{u_1}\right)\hat{i}_m.    \end{align}

    Since $\bar{X}$ is orthogonal to $\bar{\tau}_1$ in the Euclidean metric, we have
    \begin{align*}
        g_E(\nabla_{\gamma'(t)}\bar{X},\bar{\tau}_1)&=\frac{1}{|\mathcal{X}_n|}g_E(\frac{d}{dt}\mathcal{X}_n,\bar{\tau}_1)\\
        &=\frac{\sum_{i=1}^{n+1}x_{u_1}y_{u_1u_i}u_i'-\sum_{i=1}^{n+1}y_{u_1}x_{u_1u_{i}}u_i'}{|\mathcal{X}_n|}\\
        &=\frac{\sum_{i=1}^{n+1}x_{u_1}y_{u_1u_i}u_i'-\sum_{i=1}^{n+1}y_{u_1}x_{u_1u_{i}}u_i'}{\sqrt{y_{u_1}^2+x_{u_1}^2+\sum_{m=3}^{n+2}(x_{u_1}y_{u_{m-1}}-x_{u_{m-1}}y_{u_1})^2}\sqrt{x^2_{u_1}+y^2_{u_1}}}.
    \end{align*}

\begin{lem}
    For $m\in\{1,2\cdots, n\}$, we have that
    \begin{align}\label{eq:angle induction}
        \Pi_{i=1}^m\sin\bar{\theta}_i=\frac{|\mathcal{X}_{n-m}|}{|\mathcal{X}_{n}|}.
    \end{align}
\end{lem}
\begin{proof}

We prove by induction. For $m=1$, we have that
\begin{align*}
    \cos\bar{\theta}_1=&g_E(\bar{X},e_n)\\
                      =&\frac{1}{|\mathcal{X}_n|}g_E(\mathcal{X}_n,e_n)\\
                      =&\frac{c_n}{|\mathcal{X}_n|}.
\end{align*}

Now we have that
\begin{align*}
    \sin\bar{\theta}_1=&\sqrt{1-\cos^2\bar{\theta}_1}\\
                      =&\sqrt{1-\frac{c_n^2}{|\mathcal{X}_n|^2}}\\
                      =&\sqrt{1-\frac{|\mathcal{X}_n|^2-|\mathcal{X}_{n-1}|^2}{|\mathcal{X}_n|^2}}\\
                      =&\frac{|\mathcal{X}_{n-1}|}{|\mathcal{X}_n|},
\end{align*}
where we used \eqref{normalvecinduction1}.

We verified \eqref{eq:angle induction} for $m=1$. Assume that $\eqref{eq:angle induction}$ is true for $1\leq i\leq m$, we consider the case when $i=m+1$.

Since for any $m\in\{1,2,\cdots, n-1\}$, we have that
\begin{align*}
    \bar{\eta}_{m}=\frac{1}{\sin\bar{\theta}_m}\left(\bar{\eta}_{m-1}-\cos\bar{\theta}_{m}e_{n+2-m}\right).
\end{align*}

By definition 
\begin{align*}
    \cos\bar{\theta}_{m+1}=&g_E(\bar{\eta}_{m},e_{n+1-m})\\
                          =&\frac{1}{\sin\bar{\theta}_m}g_E(\bar{\eta}_{m-1},e_{n+1-m})
\end{align*}
Observe that 
\begin{align*}
    \bar{\eta}_{m-1}=\frac{1}{\sin\bar{\theta}_{m-1}}\left(\bar{\eta}_{m-2}-\cos\bar{\theta}_{m-1}e_{n+3-m}\right),
\end{align*}
where the $e_{n+2-m}$ component is perpendicular to $e_{n+1-m}$, and thus we have that
\begin{align*}
    \cos\bar{\theta}_{m+1}=\frac{1}{\sin\bar{\theta}_{m}\sin\bar{\theta}_{m-1}}g_E(\bar{\eta}_{m-2},e_{n+1-m}).
\end{align*}
The above procedure proceeds, and we finally have that
\begin{align*}
    \cos\bar{\theta}_{m+1}=&\frac{g_E(\bar{X},e_{n+1-m})}{\Pi_{i=1}^{m}\sin\bar{\theta}_i}\\
    =&\frac{c_{n+2-m}}{|\mathcal{X}_n|\Pi_{i=1}^m\sin\bar{\theta}_i}\\
    =&\frac{c_{n+2-m}}{|\mathcal{X}_{n-m}|},
\end{align*}
where we used the induction hypothesis in the last step.
We then have that
\begin{align*}
    \sin\bar{\theta}_{m+1}=&\sqrt{1-\frac{c_{n+2-m}^2}{|\mathcal{X}_{n-m}|^2}}\\
    =&\frac{|\mathcal{X}_{n-m-1}|}{|\mathcal{X}_{n-m}|},
\end{align*}
where we used \eqref{normalvecinduction1}.
Now we have 
\[
\Pi_{i=1}^{m+1}\sin\bar{\theta}_i=\frac{|\mathcal{X}_{n-m-1}|}{|\mathcal{X}_n|},
\]
the induction completes.
\end{proof}
Now, we have that
\begin{align*}
    \int_{\partial\Sigma_n}\frac{\RN{2}(\tau_1,\bar{\tau}_1)}{\Pi_{i=1}^{n}\sin\bar{\theta}_i}=&\int_{\partial\Sigma_n}
    \frac{\sum_{i=1}^{n+1}\left(x_{u_1}y_{u_1u_i}u_i'(t)-y_{u_1}x_{u_1u_{i}}u_i'(t)\right)}{|\mathcal{X}_n|\sqrt{x_{u_1}^2+y_{u_1}^2}}\Pi_{i=1}^{n}\sin\bar{\theta}_1dt\\
    =&\int_{\partial\Sigma_n}\frac{\sum_{i=1}^{n+1}\left(x_{u_1}y_{u_1u_i}-y_{u_1}x_{u_1u_i}\right)u_i'(t)}{x_{u_1}^2+y_{u_1}^2}dt\\
    =&\int_{\partial\Sigma_n}\frac{d}{dt}\arctan(\frac{y_{u_1}}{x_{u_1}})dt\\
    =&2k\pi,
\end{align*}
for some $k\geq 1$.
\end{proof}

\section{Non-trivial minimizer}
In this section, we prove that there exists a non-trivial minimizer for $\{\mathcal{A}_{\bar{\theta}_{i}}\}_{i=1}^{2}$ in $M^{4}$. To prove the existence of a non-trivial minimizer, we construct a foliation near the point $p_i$ for $i=1,2$ then prove that the foliations we construct act as good barriers.

\subsection{First Slicing}
Let $\{\frac{\partial}{\partial x_i}\}_{i=1}^{3}$ be an orthonormal basis of $T_{p_1}\partial M$ with respect to the Euclidean metric. 

By the implicit function theorem, we can write $\partial M$ in a neighborhood of $p_1=\vec{0}$ as
\begin{align} \label{boundarygraph}
    \partial M\cap N_{p_1}= \{(x_1,x_2,x_{3},-c_{ij}(x)x_ix_j+O(|x|^3))\},
\end{align}
where $N_{p_1}$ is a small neighborhood of $p_1$ in $\mathbb R^{4}$, and $C(x)=(c_{ij}(x))$ can be written as
\[
c_{ij}(x)=\frac{1}{2}\RN{2}(\frac{\partial}{\partial x_i},\frac{\partial}{\partial x_j})+O(|x|^4),
\]
where $c_{ij}$ is a positive definite matrix whose entries are all non-zero when $|x|\neq 0$.

\begin{rem}
    The reason to define our matrix $C$ depending on the point $x$ is to guarantee that all the entries of $C$ are non-zero and we can apply the following linear algebra techniques. 
\end{rem}

Note that $(x_1,x_2,x_3,x_4)$ is also a local coordinate of $M^{4}$ in a small neighborhood of $p_1$, by rotating the coordinate accordingly, we can write the metric $g$  at the point $p_1$ as $g(p_1)= (a_{ij})_{4\times 4}$ and the $3 \times 3$ principal matrix of $g(p)$ to be $g(p_{1})|_{\partial M}$ and a diagonal matrix i.e. $a_{ij}=0$ for $i \neq j$ and $1 \le i, j \le 3$.

We can also assume that $g= g_{0}+th+O(t^{2})$ where $h$ is a bounded symmetric $2$-tensor. We construct the following graphical foliation near $p_1$.
\begin{align} \label{foliationgraph}
    \Sigma_{s,t}:=\{(tx_1,tx_2,tx_{3}, t^2\{c_{ij}\left(b_{ij}(1+s)-1\right) x_{i} x_{j}+O(t^4)\}): \vec{x}\in E_s\},
\end{align}
where 
\[
E_s:=\{\vec{x}\in\mathbb R^{3}: c_{ij}b_{ij}x_ix_j<(1+s)^{-1}\},
\]
and $B=(b_{ij})$ is a symmetric $3\times 3$ matrix defined as follows. We abbreviate the notation when $s=0$: $\Sigma_{t} := \Sigma_{s,t}$.

Let $D= \frac{1}{a^{44}} g(p_{1})|_{\partial M}^{-1}$ and this follows $D_{ii} = \frac{1}{\sqrt{a_{ii} a^{44}}}$ where $D$ is a diagonal matrix. Since $C$ is a positive definite matrix, we define 
    \[
    S=\left(D^{\frac{1}{2}}C^TCD^{\frac{1}{2}}\right)^{\frac{1}{2}},
    \]
    and
    \[
    V:=D^{-1/2}SD^{-1/2}.
    \]
    It is straightforward to notice that both $S$ and $V$ are symmetric.

    Finally, we define $B=(b_{ij})$ as
    \[
    b_{ij}=\frac{V_{ij}}{c_{ij}}.
    \]
   By definition, $B$ is a symmetric $3\times 3$ matrix. 
   \begin{rem}
       In fact, all the matrices defined above depend on the coordinate to a higher order. Since all our estimates allow higher order errors, we do not emphasize the dependence.
   \end{rem} 
\begin{prop} [Contact Angle Comparison]\label{firstcontactanglecomp}
    Suppose $\theta^{g}_{s,t}(x)$ is the contact angle of $\Sigma_{s,t}$ and $\partial M$ at $x\in \partial\Sigma_{s,t}$ with respect to the metric $g$; and $\bar{\theta}$ is a prescribed angle function with respect to the Euclidean metric. 
    We have
    \begin{align}
        \cos\theta^g_{s,t}(x)=\cos\bar{\theta}(x)-4st^2\sum_i\left(\frac{(\sum_jc_{ij}b_{ij}x_{j})^2}{a_{ii}a^{44}}\right)+O(s^2)+O(t^3).
    \end{align}
\end{prop}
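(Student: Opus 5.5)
The plan is a direct Taylor expansion in $t$ (and to first order in $s$) of both cosines at a common boundary point $x\in\partial\Sigma_{s,t}$, using the graphical descriptions \eqref{boundarygraph} and \eqref{foliationgraph}.

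\emph{Geometric setup.} A point of $\partial\Sigma_{s,t}$ has scaled coordinates $\vec{x}\in\partial E_s$, i.e.\ $c_{ij}b_{ij}x_ix_j=(1+s)^{-1}$. Its last coordinate is then $t^2c_{ij}(b_{ij}(1+s)-1)x_ix_j=-t^2c_{ij}x_ix_j+t^2$ up to the stated errors. Hence it lies on $\partial M$, which is shifted so that $p_1$ sits at height $t^2$ in these coordinates. I will write $\Sigma_{s,t}=\{x_4=u(x')\}$ and $\partial M=\{x_4=\phi(x')\}$ with
\[
u(x')=c_{ij}(b_{ij}(1+s)-1)x'_ix'_j+t^2,\qquad \phi(x')=t^2-c_{ij}x'_ix'_j,
\]
where $x'=t\vec x$, both up to higher order. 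The upward normals are proportional to $(-\nabla u,1)$ and $(-\nabla\phi,1)$.

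\emph{Computing the two cosines.}
\begin{itemize}
\item $\cos\theta^g_{s,t}$ is the $g$-inner product of the unit $g$-normals of $\Sigma_{s,t}$ and $\partial M$, with orientation convention matching the definition of $\bar\theta_1$ as $\arccos\langle\bar X,e_{4}\rangle$. For the $g$-unit normals I use $g^{-1}$ applied to the differentials $dx_4-du$ and $dx_4-d\phi$. Thus
\[
\cos\theta^g=\frac{g^{-1}(dx_4-du,\,dx_4-d\phi)}{|dx_4-du|_{g^{-1}}\,|dx_4-d\phi|_{g^{-1}}}.
\]
\item Since $\nabla u,\nabla\phi=O(t)$ and $g=g(p_1)+O(t)$, I expand to order $t^2$. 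The diagonal normalization $a_{ij}=0$ for $i\ne j\le3$ makes the expansion explicit.
\item The term $a^{44}$ enters through the normalization of $dx_4$. The terms $a^{ii}$, or equivalently $1/a_{ii}$, enter as the weights in $|\nabla u|^2$ and $\nabla u\cdot\nabla\phi$.
\item $\cos\bar\theta(x)=\langle\bar X,e_4\rangle$ is computed the same way with $g_E$, giving $1-\frac12 t^2|2C\vec x|^2+O(t^3)$.
\end{itemize}
With the normals $(-\nabla u,1)$ and $(-\nabla\phi,1)$, the angle expansion reads
\[
1-\cos\theta\approx\tfrac12|\nabla u-\nabla\phi|^2_{\text{weighted}},
\]
where $\nabla u-\nabla\phi=2(1+s)t(c_{ij}b_{ij}x_j)_i$. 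The leading $O(1)$ parts cancel against $\cos\bar\theta$ only if the construction of $B$ is right. This is exactly where the definitions $S=(D^{1/2}C^TCD^{1/2})^{1/2}$, $V=D^{-1/2}SD^{-1/2}$ and $b_{ij}=V_{ij}/c_{ij}$ are used.

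\emph{Choice of $B$ and the $s$-linear term.} Because $c_{ij}b_{ij}=V_{ij}$, the matrix $V$ satisfies $VDV=C^TC$. Therefore the $g$-weighted quantity $\sum_i D\text{-weighted}(V\vec x)_i^2$ equals the Euclidean $|C\vec x|^2$. Consequently the $s=0$ contribution of the Riemannian angle matches $\cos\bar\theta$ at order $t^2$, and what remains is the part linear in $s$. Differentiating $(1+s)^2$ at $s=0$ gives a factor $2s$. Combined with the factor $4$ in $|2V\vec x|^2$ and the $\frac12$ from the cosine expansion, this yields
\[
-4st^2\sum_i\frac{(\sum_j c_{ij}b_{ij}x_j)^2}{a_{ii}a^{44}}.
\]
Everything else, namely the $O(t)$ metric variation $th$, the $O(|x|^3)$ and $O(t^4)$ graph errors, and the $x$-dependence of $c_{ij}$, goes into $O(t^3)$. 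Terms quadratic in $s$ go into $O(s^2)$.

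\emph{Main obstacle.} I expect the main difficulty to be the bookkeeping that verifies the exact $s=0$ cancellation, i.e.\ that $VDV=C^TC$ indeed makes the $g$-angle agree with the Euclidean angle to order $t^2$. This requires matching the normalization $D=\frac{1}{a^{44}}g(p_1)|_{\partial M}^{-1}$ correctly, and keeping track of the fact that $\bar\theta$ is evaluated at the same point $x$ but measured with $g_E$. I would first check this step in the scalar case $n=1$, then pass to the diagonalized general case.
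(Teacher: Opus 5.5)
Your proposal is correct and follows the paper's proof. Both cosines are expanded to order $t^2$, and the identity $VDV=C^TC$ with $D_{ii}=1/(a_{ii}a^{44})$ cancels the $s$-independent $t^2$ terms, so the $s$-linear part of $-2(1+s)^2t^2\sum_i(\sum_j c_{ij}b_{ij}x_j)^2/(a_{ii}a^{44})$ gives the stated $-4st^2$ term.

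One correction to your bookkeeping: $a^{ii}$ and $1/a_{ii}$ are not equivalent when $a_{i4}\neq 0$, since only the tangential $3\times 3$ block of $g(p_1)$ is diagonal. The weight $1/(a_{ii}a^{44})$ actually comes from projecting $du-d\phi$ $g^{-1}$-orthogonally to $dx_4$. That projection gives the Schur complement of $g^{-1}$, which equals the inverse of the diagonal tangential block.
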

\begin{proof}
By straightforward computations as in dimension $3$ \cite{koyao}, we have 
    \begin{align*}
        \cos\theta^g_{s,t}=1-2t^2\sum_{i}\frac{(c_{ij}b_{ij}(1+s)x_{j})^2}{a_{ii}a^{44}}+O(t^3),
    \end{align*}
    and
    \begin{align*}
        \cos\bar{\theta}(x)=1-2t^2\sum_{i}(c_{ij}x_{j})^2+O(t^3).
    \end{align*}
    By the definition of $B$, we have the following identities:
    \begin{align*}
    \sum_{i}\frac{c_{ik}c_{i l}b_{ik}b_{il}}{a_{ii}a^{44}}&=\sum_{i}\frac{V_{ik}V_{il}}{a_{ii}a^{44}}\\
    &=(VDV)_{kl}\\
    &=(D^{-1/2}S^2D^{-1/2})_{kl}\\
    &=(C^TC)_{kl}\\
    &=\sum_{i}c_{ik}c_{il},
    \end{align*}
    for any $k,l\in \{1,2,3\}$.
    The desired estimates then follow.
\end{proof}

We compute the mean curvature formula of parametrized hypersurfaces.
\begin{lem}[Mean curvature formula of graphs] \label{graphmcformula} Suppose $\Sigma_{t}$ is a family of hypersurfaces which is parametrized as
\[
\Sigma_{t} = \{(tx_1,tx_2,tx_{3}, t^2\{c_{ij} x_{i} x_{j}+O(t^2)\}): \vec{x}\in E_s\},
\]
where $\Sigma_{t}$ is supported in a small neighborhood of $p_{1}$. Then for small $t>0$, the mean curvature $H_{\Sigma_{t}}$ under $g(p_{1})$ is
\begin{equation*}
    H_{\Sigma_{t}}^{g(p_{1})}(x) = - \frac{1}{ \sqrt{a^{44}}} \bigg( \sum_{i} \frac{2c_{ii}}{a_{ii}}\bigg) + O(t).
\end{equation*}
\begin{proof}
The formula follows directly from the proof of Lemma 5.4 in \cite{koyao}.
\end{proof}
\end{lem}
By applying Lemma \ref{graphmcformula}, we obtain the following mean curvature difference directly. We denote the graph representation of $\Sigma_{t}$ by $(tx_1,tx_2,tx_{3}, G(tx_{i}))$.
\begin{prop} \label{mccomparisong} On an intersection point $(tx_1,tx_2,tx_{3}, G(tx_{i})) \in \partial M \cap \Sigma_{t}$, the mean curvature difference $H_{\Sigma_{t}}^{g(p_{1})} - H_{\partial M}^{g(p_{1})}$ is
\begin{equation}\label{mcdiff}
  H_{\Sigma_{t}}^{g(p_{1})} - H_{\partial M}^{g(p_{1})} = -2 \sqrt{a^{44}} \Tr (S)  +O(t).  
\end{equation}
\begin{proof} By applying Lemma \ref{graphmcformula} to the graphical parametrizations of $\partial M$ and $\Sigma_{s,t}$ in (\ref{boundarygraph}) and (\ref{foliationgraph}), respectively, we obtain the following:
\begin{align}
\nonumber H_{\Sigma_{t}}^{g(p_{1})} - H_{\partial M}^{g(p_{1})}  &=  -\frac{2}{ \sqrt{a^{44}}} \bigg( \sum_{i} \frac{c_{ii}b_{ii}}{a_{ii}}\bigg) + O(t) \\ &= -\frac{2}{ \sqrt{a^{44}}} \bigg( \sum_{i} \frac{V_{ii}}{a_{ii}}\bigg) + O(t) \label{vdef},
\end{align}
where (\ref{vdef}) is obtained by the definition of the matrix $B$. By the definition of $V$, we have
\begin{align}
 \nonumber V_{ii} &= D^{-\frac{1}{2}}_{ik} S_{kl} D^{-\frac{1}{2}}_{li} \\ \nonumber &= D^{-\frac{1}{2}}_{ii} S_{ii} D^{-\frac{1}{2}}_{ii} \\ &= S_{ii}a_{ii}a^{44} \label{Vcomp}
\end{align}
for $i=1,2,3$. By plugging (\ref{Vcomp}) into (\ref{vdef}), we obtain (\ref{mcdiff}).
\end{proof}
\end{prop}
We now compute the difference of mean curvature difference of leaves and of boundaries in between the metric $g$ and Euclidean metric. 
\begin{cor} The asymptotic mean curvature difference at $(tx_1,tx_2,tx_{3}, G(tx_{i})) \in \partial M \cap \Sigma_{t}$ is
\begin{align} \label{mcgeuclcomparison}
    H_{\Sigma_{t}}^{g} &= H^{g}_{\partial M} - H^{g_{Eucl}}_{\partial M} +   2 \Tr(C) -  2 \sqrt{a^{44}} \Tr(S)  +O(t).
\end{align}
\begin{proof}
Since $H_{\partial M}^{g_{Eucl}} = 2 \Tr(C)$, we can apply Proposition \ref{mccomparisong} and arguments in Corollary 5.2 in \cite{koyao} and obtain the formula (\ref{mcgeuclcomparison}).
\end{proof}
\end{cor}
We define the limit of mean curvature on the foliation by $H_{p_{+}} : = \lim_{t \rightarrow 0} H_{\Sigma_{t}}^{g}(x)$.  We obtain the following as a limit of mean curvature on leaves near $p_{+}$.
\begin{cor} \label{limitmcleavecor}
The formula of the limit of mean curvature on the foliation  $H_{p_{+}}$ holds as
\begin{align} \label{limitmcleave}
    H_{p_{+}} &= H^{g}_{\partial M}(p_{+}) - H^{g_{Eucl}}_{\partial M}(p_{+}) +   2 \Tr(C) -  2 \sqrt{a^{44}}\Tr(S).
\end{align}
\begin{proof} We directly obtain (\ref{limitmcleave}) by sending $t$ to $0$ in (\ref{mcgeuclcomparison}). 
\end{proof}
\end{cor}
We now characterize conditions of $H_{p_{+}}>0$, which gives a foliation with strictly positive mean curvature near $p_{+}$.
\begin{prop} \label{posmc} $H_{p_{+}}>0$ holds if and only if $g(p_{1})|_{\partial M}$ is a multiple of an identity matrix and $H_g^2g|_{\partial M}=H_0^2g_{E}|_{\partial \bar{M}}$ at $p_{+}$. 
\begin{proof}
Since $H_g^2g|_{\partial M}\geq H_0^2g_{E}|_{\partial \bar{M}}$, we have
\begin{equation*}
    H \ge \max \bigg \{ \frac{1}{\sqrt{a_{ii}}} \bigg \}_{i=1}^{3} H_{0}.
\end{equation*}
Without loss of generality, let us take $a_{11} = \min \{ a_{ii} \}$. By applying $H_{\partial M}^{g_{Eucl}} = 2 \Tr(C)$ and Corollary \ref{limitmcleavecor}, we have
\begin{align} \nonumber H_{p_{+}} &= H^{g}_{\partial M}(p_{+}) - H^{g_{Eucl}}_{\partial M}(p_{+}) +   2 \Tr(C) -  2 \sqrt{a^{44}} \Tr(S)  \\
\nonumber &\ge 2 \bigg( \frac{1}{\sqrt{a_{11}}} -1 \bigg) \Tr(C) +   2 \Tr(C) -  2 \sqrt{a^{44}} \Tr(S)  \\ &= \nonumber 2 \bigg( \frac{1}{\sqrt{a_{11}}} \Tr(C) - \sqrt{a^{44}} \Tr(S)  \bigg) \\&= \label{tracemc} 2 \bigg( \frac{1}{\sqrt{a_{11}}} \Tr(C) - \sqrt{a^{44}} \Tr\left( \left(D^{\frac{1}{2}}C^TCD^{\frac{1}{2}}\right)^{\frac{1}{2}} \right)  \bigg).
\end{align}
Since $\left(D^{\frac{1}{2}}C^TCD^{\frac{1}{2}}\right)^{\frac{1}{2}} \le 1/ (a_{11} a^{44}) C^TC$ we have
\begin{align} \nonumber
    \left(D^{\frac{1}{2}}C^TCD^{\frac{1}{2}}\right)^{\frac{1}{2}} &\le \frac{1}{\sqrt{a_{11} a^{44}}} \left( C^TC \right)^\frac{1}{2} \\ \label{symmetrymat} &\le \frac{1}{\sqrt{a_{11} a^{44}}} C,
\end{align}
where (\ref{symmetrymat}) follows from the fact that $C$ is a symmetric matrix. By taking the trace to (\ref{symmetrymat}) and applying to 
(\ref{tracemc}), we obtain $H_{p_{+}} \ge 0$ and the equality holds if and only if $g(p_{1})|_{\partial M}$ is a multiple of an identity matrix and $H_g^2g|_{\partial M}=H_0^2g_{E}|_{\partial \bar{M}}$ at $p_{+}$.
\end{proof}
\end{prop}
We prove the existence of local foliation near $p_{+}$ whose mean curvature on leaves is nonnegative and the contact angle is larger than the angle function induced by Euclidean slices.
\begin{prop} \label{nonzeromcfoliation}
Assume $g(p_{1})|_{\partial M}$ is not a multiple of an identity matrix. Then there exists a foliation $\Sigma_{s,t}$ near $p_{+}$ such that $H_{s,t}>0$ and $\theta_{s,t}^{g}(x) > \overline{\theta}(x)$ for each $x \in \Sigma_{s,t} \cap \partial M$.
\begin{proof} By Proposition \ref{posmc}, we have $H_{p_{+}}>0$. Hence, for small $0<s<s_{0}$ and $0<t<t_{0}$, we have a foliation $\Sigma_{s,t}$ which satisfies $H_{s,t}>0$ and $\theta_{s,t}^{g}(x) > \overline{\theta}(x)$ on boundary by Proposition \ref{firstcontactanglecomp}.
\end{proof}
\end{prop}
We obtain a nonnegative mean curvature foliation when $g(p_{1})|_{\partial M}$ is a multiple of an identity matrix, and the proof is the same as in dimension $3$ case in Ko-Yao \cite{koyao}.
\begin{prop} \label{nontrivialfirstminimizer} Assume $g(p_{1})|_{\partial M}$ is a multiple of an identity matrix. Then there exists a nonnegative constant mean curvature foliation $\Sigma_{t}$ with prescribed angle $\overline{\theta}(x)$ along $\partial \Sigma_{t}$. Moreover, there exists a nontrivial minimizer $\Omega \neq \emptyset$ of $\mathcal{A}_{\theta_{1}}$ in $M$.
\begin{proof} The first statement directly follows from Proposition 5.5 in Ko-Yao \cite{koyao}. We can apply Proposition 5.6 in Ko-Yao \cite{koyao} with Proposition \ref{nonzeromcfoliation}, and we obtain the existence of a nontrivial minimizer $\Omega \neq \emptyset$ of $\mathcal{A}_{\theta_{1}}$.
\end{proof}
\end{prop}
\subsection{Second Slicing}
Let $\Sigma_1$ be a connected component of a non-trivial minimizer for the first capillary functional $\mathcal{A}_{\bar{\theta}_{1}}$ by the previous slicing. We denote the metric $g|_{\Sigma_{1}}$ by $h$. The second capillary functional is a weighted capillary functional, hence it is equivalent to considering a minimizer of capillary functional with respect to the metric $(\Sigma_{1}^{3}, f_{1} h)$, where $f_{1}$ is the first eigenfunction of the stability operator on $\Sigma$ from the first slicing. We denote $\tilde{h}=f_{1}h$.

Let us consider $p_{2} \in \partial \Sigma_{1}$ such that the $x_{4}$-coordinate of $p_{2}$ is maximum over $\partial \Sigma_{i}$. After an appropriate rotation of the first $3$ axes, without loss of generality, we take the $x_{3}$-axis to achieve the maximum value of $x_{3}$-coordinate. Our choice of $x_{3}$-axis implies $\nabla^{\Sigma_{1}}x_{4}(p_{2})=0$ and hence $T_{p_{2}}\partial\Sigma_{1}$ agrees to $T_{p_{2}}\partial\bar{\Sigma}_{1}$, where $\bar{\Sigma}_{1}$ is a corresponding codimension $1$ slice in $\bar{M}$.

We denote the second fundamental form of $(\bar{\Sigma},\bar{h})$ at the point $p_2$ as $-2M$, where $M=(m_{ij})$ and $\bar{h}=g_{E}|_{\bar{\Sigma}}$ is again a Euclidean metric by definition. We can write $\partial\bar{\Sigma}$ as a graph near $p_2$ as:
\begin{align}
   \partial \bar{\Sigma}=\{(x_1,x_2,m_{ij}x_ix_j+O(|x|^3))\}.
\end{align}

Then there exists a local diffeomorphism $\Psi$ between a small neighborhood of $p_2$ in $\bar{\Sigma}$ and a small neighborhood of $p_2$ in $\Sigma$ satisfying the following:
    \begin{enumerate}
        \item $\Psi(p_2)=p_2$ and $D\Psi|_{p_2,T_{p_2}\partial \bar{\Sigma}}=id$;
        \item There exists a neighborhood $N_{p_{2}}$ of $p_{2}$ such that $\Psi ( \bar{\Sigma} \cap N_{p_{2}})\subset \Sigma$;
        \item The pushforward metric of $\tilde{h}$ at $p_2$ in the Euclidean coordinate can be represented by
        \begin{align}
        \tilde{h}_0=\left(\begin{matrix}
            \tilde{h}_{11} & 0 & \tilde{h}_{13}\\
            0 & \tilde{h}_{22} & \tilde{h}_{23}\\
            \tilde{h}_{31} & \tilde{h}_{32} & \tilde{h}_{33}
        \end{matrix}\right).
        \end{align}
    \end{enumerate}

Define 
    \begin{align*}
        Q=\left(\begin{matrix}
            \frac{1}{\tilde{h}_{11}\tilde{h}^{33}} &0 \\
            0 & \frac{1}{\tilde{h}_{22}\tilde{h}^{33}}
        \end{matrix}\right).
    \end{align*}
     Since $Q$ is a positive definite matrix, we define 
    \[
    R=\left(Q^{1/2}M^{T}MQ^{1/2}\right)^{1/2}
    \]
    and
    \[
    U=Q^{-1/2}RQ^{-1/2}.
    \]
   Finally, we take
    \[
   n_{ij}=\frac{U_{ij}}{m_{ij}},
    \]
    when $m_{ij}=0$, we take $n_{ij}=0$.
    Since $M$ and $Q$ are all positive definite and symmetric, we know $N=(n_{ij})$ is also positive definite and symmetric.

We can also assume that $\tilde{h}=\tilde{h}_0+O(t)$, and we construct the following graphical foliation near $p_2$ as
\begin{align}
    \Gamma_{s,t}:=\{\varphi_{s,t}(x)=(tx_1+O(t^3),tx_2+O(t^3),t^2\left\{m_{ij}(n_{ij}(1+s)-1)x_ix_j\right\}+O(t^4)): x\in F_s\},
\end{align}
\[
F_s:=\{x\in\mathbb R^2: m_{ij}n_{ij}x_ix_j<(1+s)^{-1}\}.
\]

We use $\cos\rho_{s,t}(\Psi(x))$ to denote the contact angle of $\Gamma_{s,t}$ at $x$ with respect to the metric $\tilde{h}$, $\cos\bar{\rho}(x)$ to denote the prescribed angle function $\cos\bar{\theta}_2$ at the point $\varphi_{s,t}(x)\in \Gamma_{s,t}$ in $\partial\bar{\Sigma}$, and $\cos\bar{\rho}(\Psi(x))$ to denote the prescribed angle function $\cos\bar{\theta}_2$ at the point $\Psi(\varphi_{s,t}(x))\in\Gamma_{s,t}$ in $\partial\Sigma$. 

\begin{prop}
    We have the following comparison of contact angles
    \begin{align}
        \cos\rho_{s,t}(\Psi(x))=\cos\bar{\rho}(x)-4st^2\sum_{i}\frac{(m_{ij}n_{ij}x_j)^2}{\tilde{h}_{ii}\tilde{h}^{33}}+O(s^2)+O(t^3)
    \end{align}
\end{prop}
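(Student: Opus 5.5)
The plan is to mimic the proof of Proposition \ref{firstcontactanglecomp}, now working in the three-dimensional slice $\Sigma_1$ with the conformal metric $\tilde{h}=f_1 h$. The local diffeomorphism $\Psi$ lets us identify a neighborhood of $p_2$ in $\Sigma_1$ with a neighborhood in $\bar{\Sigma}_1$. In these coordinates $\partial\Sigma_1$ is the graph $x_3 = m_{ij}x_ix_j+O(|x|^3)$, the leaves $\Gamma_{s,t}$ are the graphs of $t^2 m_{ij}(n_{ij}(1+s)-1)x_ix_j+O(t^4)$, and the metric is $\tilde h=\tilde h_0+O(t)$. The $O(t)$ metric error and the $O(t^3)$ reparametrization errors only affect the contact angle at order $O(t^3)$, since the normals are already $O(t)$-close to $e_3$. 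So it suffices to compute with the constant metric $\tilde h_0$ and then absorb the errors.

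\textbf{Step 1: the angle for $\tilde h_0$.} At a boundary point $\varphi_{s,t}(x)$ with $x\in\partial F_s$, I compute the $\tilde h_0$-unit normals of $\Gamma_{s,t}$ and of $\partial\Sigma_1$ from the gradients of their defining functions $x_3-G(x_1,x_2)$. The gradient of $\partial\Sigma_1$ is $(-2tm_{ij}x_j,\,1)$ up to $O(t^2)$. The gradient of the leaf is $(-2t m_{ij}(n_{ij}(1+s)-1)x_j,\,1)$. Their difference in the tangential components is $-2t(1+s)m_{ij}n_{ij}x_j$.

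The cosine of the contact angle is the $\tilde h_0$-inner product of the two unit normals. Because the $(1,2)$ block of $\tilde h_0$ is diagonal, the dual metric $\tilde h_0^{-1}$ on covectors has $ii$-entries $\tilde h^{ii}$. To second order in $t$ one finds
\begin{align*}
\cos\rho_{s,t}=1-2t^2\sum_i\frac{((1+s)m_{ij}n_{ij}x_j)^2}{\tilde h_{ii}\tilde h^{33}}+O(t^3).
\end{align*}
The ratio $1/(\tilde h_{ii}\tilde h^{33})$ comes from normalizing by $\tilde h^{33}$ and reducing $\tilde h^{ii}$ after eliminating the off-diagonal $\tilde h_{i3}$. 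This is exactly the same bookkeeping as in \cite{koyao} and Proposition \ref{firstcontactanglecomp}, which is why the entries of $Q$ appear.

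The cross terms with $\tilde h_{13},\tilde h_{23}$ must cancel at order $t^2$. They enter both normals identically at order $t$, so their contribution to $1-\cos$ is quadratic only in the difference of the gradients. Verifying this cancellation is the main point I expect to need care.

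\textbf{Step 2: the Euclidean angle.} On $\partial\bar\Sigma_1$, the angle $\bar\rho=\bar\theta_2$ is the angle between $\bar\eta_1$ and $e_{3}$, the $x_3$-direction after the rotation. Since $T_{p_2}\partial\Sigma_1=T_{p_2}\partial\bar\Sigma_1$ and $D\Psi|_{p_2}=\mathrm{id}$, the Euclidean computation gives
\begin{align*}
\cos\bar\rho(x)=1-2t^2\sum_i(m_{ij}x_j)^2+O(t^3).
\end{align*}
Evaluating $\bar\rho$ at $\Psi(\varphi_{s,t}(x))$ rather than at $\varphi_{s,t}(x)$ changes it only by $O(t^3)$. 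This is because $\Psi$ fixes $p_2$ to first order, so the displacement is $O(t^2)$ and $\bar\rho$ has vanishing gradient at $p_2$.

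\textbf{Step 3: the identity from the choice of $N$.} By construction, $m_{ij}n_{ij}=U_{ij}$ with $U=Q^{-1/2}RQ^{-1/2}$. Hence
\begin{align*}
\sum_i \frac{U_{ik}U_{il}}{\tilde h_{ii}\tilde h^{33}}=(UQU)_{kl}=(Q^{-1/2}R^2Q^{-1/2})_{kl}=(M^TM)_{kl}.
\end{align*}
So the $s=0$ part of the quadratic form in Step 1 equals $\sum_i(m_{ij}x_j)^2$, which matches Step 2. Expanding $(1+s)^2=1+2s+O(s^2)$ gives the remaining term $-4st^2\sum_i(m_{ij}n_{ij}x_j)^2/(\tilde h_{ii}\tilde h^{33})$. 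This proves the claim.

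The main obstacle is Step 1 with the non-diagonal entries $\tilde h_{i3}$, together with controlling the $\Psi$-induced errors. If the cancellation is not clean, I would first make an $O(t)$-small linear change of the $x_3$-coordinate (a shear absorbing $\tilde h_{i3}$). This shear preserves property (1) of $\Psi$ and the graph form up to the allowed errors.
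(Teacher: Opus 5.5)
Your proposal is correct and is essentially the paper's own proof. You expand the $\tilde h$-contact angle as $1-2(1+s)^2t^2\sum_i (m_{ij}n_{ij}x_j)^2/(\tilde h_{ii}\tilde h^{33})+O(t^3)$, expand $\cos\bar\rho$ in the Euclidean metric, and match the $s=0$ parts via $UQU=M^TM$; the paper simply cites \cite{koyao} for the first expansion.

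Two small remarks, however. First, the cross-term issue you flag is settled by the Schur complement identity $\tilde h^{ij}-\tilde h^{i3}\tilde h^{j3}/\tilde h^{33}=\delta_{ij}/\tilde h_{ii}$ for $i,j\le 2$, i.e.\ the inverse of the diagonal upper-left block of $\tilde h_0$. Hence $1-\cos\rho_{s,t}=\frac{1}{2\tilde h^{33}}\sum_i d_i^2/\tilde h_{ii}+O(t^3)$, with $d$ the difference of the two gradients, and no shear is needed. Second, your gradient difference $-2t(1+s)m_{ij}n_{ij}x_j$ requires $\partial\Sigma_1$ to be the graph $x_3=-m_{ij}x_ix_j$, which matches the second fundamental form $-2M$ and the first slicing. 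With $+m_{ij}x_ix_j$ as you wrote (copying the paper's display), the difference would instead be $-2t\big((1+s)m_{ij}n_{ij}-2m_{ij}\big)x_j$.
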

\begin{proof}
    By straightforward computations as in dimension $3$ \cite{koyao}, we have
    \[
    \cos\rho_{s,t}(\Psi(x))=1-2(1+2s)t^2\sum_{i}\left(\frac{(\sum_jm_{ij}n_{ij}x_j)^2}{\tilde{h}_{ii}\tilde{h}^{33}}\right)+O(s^2)+O(t^3),
    \]
    and
    \[
    \cos\bar{\rho}(x)=1-2t^2\sum_{i}(m_{ij}x_j)^2+O(t^3).
    \]
    By definition of $N$, we have the following identities:
    \begin{align*}
        \sum_i\frac{m_{ik}m_{il}n_{ik}n_{il}}{\tilde{h}_{ii}\tilde{h}^{33}}&=\sum_i\frac{U_{ik}U_{il}}{\tilde{h}_{ii}\tilde{h}^{33}}\\
        &=(UQU)_{kl}\\
        &=(M^TM)_{kl}\\
        &=\sum_{i}m_{ik}m_{il},
    \end{align*}
    for any $k,l\in\{1,2\}$. The desired estimates then follow.
\end{proof}

\begin{lem}
    We have the following estimate:
    \begin{align}
        \cos\bar{\rho}(x)-\cos\bar{\rho}(\Psi(x))=O(t^3).
    \end{align}
\end{lem}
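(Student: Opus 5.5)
The idea is to view $\cos\bar{\rho}$, that is $\cos\bar{\theta}_2$, as a smooth function on a fixed neighborhood of $p_2$ in $\partial M$. The two points $\varphi_{s,t}(x)\in\partial\bar{\Sigma}$ and $\Psi(\varphi_{s,t}(x))\in\partial\Sigma$ are then at distance $O(t^3)$ from each other, and the claim follows from Lipschitz continuity. Concretely, $\cos\bar{\theta}_2=\langle\bar{\eta}_1,e_3\rangle$ is defined through the Euclidean slicing of $\partial\bar{M}$ and pulled back by the boundary diffeomorphism. Near $p_2$ it is smooth, since $p_2$ is not one of the degenerate points $p_1^\pm$ and $\bar{\eta}_1$ is smooth away from them. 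Hence
\[
|\cos\bar{\rho}(x)-\cos\bar{\rho}(\Psi(x))|\le \|\nabla\cos\bar{\theta}_2\|_{L^\infty(N_{p_2})}\, d\big(\varphi_{s,t}(x),\Psi(\varphi_{s,t}(x))\big),
\]
and it is enough to show that this distance is $O(t^3)$.

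To estimate the displacement, I would write $\Psi(y)-y$ in the Euclidean coordinates centered at $p_2$, with $y=\varphi_{s,t}(x)$ and $|y|=O(t)$. Property (1) of $\Psi$ gives $\Psi(p_2)=p_2$ and $D\Psi|_{p_2}=\mathrm{id}$ on $T_{p_2}\partial\bar{\Sigma}$. This tangent plane coincides with $T_{p_2}\partial\Sigma_1$ because of the choice of $p_2$ as a maximum of $x_4$ and the rotation of the $x_3$-axis, which forces $\nabla^{\Sigma_1}x_4(p_2)=0$. A Taylor expansion of $\Psi$ along the boundary then gives $\Psi(y)-y=O(|y|^2)=O(t^2)$ tangentially. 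This is one order short, so the second-order term has to be removed.

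For the second-order term I would use the freedom in constructing $\Psi$. Both $\partial\bar{\Sigma}$ and $\partial\Sigma_1$ are graphs over the common tangent plane $T_{p_2}$ inside the same three-dimensional boundary chart of $\partial M$. I would choose $\Psi$ to be the graph-to-graph map that preserves the tangential coordinates $(x_1,x_2)$ and moves points only in the normal direction within $\partial M$. With this choice the displacement is exactly the difference of the two graph functions, and to second order this difference is governed by the difference of the second fundamental forms of $\partial\Sigma_1\subset\partial M$ and $\partial\bar{\Sigma}_1\subset\partial M$ at $p_2$. The $O(t^2)$ normal displacement in the $x_3$ direction contributes to $\cos\bar{\rho}$ only through $\partial_{x_3}\cos\bar{\theta}_2$. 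So the next step is to check that this derivative vanishes at $p_2$. This should follow from the Angle Identities (Proposition \ref{prop: Angle Identities}), which give $\sin\bar{\theta}_1\partial_e\bar{\theta}_2=\RN{2}(e,\bar{\nu}_2)$, together with the fact that at the extremal point $p_2$ the directions $\bar{\nu}_2$ and $e_3$ are $\RN{2}$-orthogonal, because $p_2$ is a critical point of $x_3$ on $\partial\bar{\Sigma}_1$. Once that derivative vanishes, the linear term is zero, the quadratic term is $O(|y|\cdot t^2)=O(t^3)$, and the lemma follows.

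The main obstacle is this cancellation of the $O(t^2)$ contribution. If $\partial_{x_3}\cos\bar{\theta}_2(p_2)$ does not vanish outright, my fallback is to absorb the discrepancy into the $O(t^3)$ parametrization errors already allowed in $\Gamma_{s,t}$ (the $tx_i+O(t^3)$ terms). This means redefining $\Psi$ so that its second-order Taylor coefficient matches the curvature difference, which the proof permits because all estimates tolerate higher-order errors.
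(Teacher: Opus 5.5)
Your skeleton matches the paper's. The displacement $|\Psi(\varphi_{s,t}(x))-\varphi_{s,t}(x)|$ is only $O(t^2)$, so your opening claim of an $O(t^3)$ distance plus Lipschitz continuity is false, as you later notice. The lemma holds only because the first derivatives of $\cos\bar{\theta}_2$ are $O(t)$ on the region $|y|=O(t)$.

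\textbf{The gap.} Your justification of that vanishing, which is the whole content of the lemma, does not work.
\begin{itemize}
\item The claimed $\RN{2}$-orthogonality of $\bar{\nu}_2$ and the normal direction at $p_2$ is not available. Being a critical point of $x_3$ on $\partial\bar{\Sigma}_1$ gives no control of the mixed entries of $\RN{2}$ at $p_2$, and these are nonzero for a generic convex domain.
\item $\bar{\nu}_2$ is undefined at $p_2$. The curves $\partial\bar{\Sigma}_2^{t_2}$ collapse to $p_2$, and nearby $\bar{\nu}_2$ takes every direction of $T_{p_2}\partial\bar{\Sigma}_1$.
\item $\bar{\theta}_2$ is only Lipschitz at $p_2$, not differentiable, so a statement like $\partial_e\bar{\theta}_2(p_2)=0$ is the wrong target.
\end{itemize}

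\textbf{The missing observation.} The critical-point property gives $\bar{\eta}_1(p_2)=e_3$, i.e.\ $\bar{\theta}_2(p_2)=0$. Now $\cos\bar{\theta}_2=\langle\bar{\eta}_1,e_3\rangle\le 1$ is smooth near $p_2$ and equals $1$ there, so $p_2$ is a maximum. Hence $\nabla\cos\bar{\theta}_2(p_2)=0$ in every direction, and $|\nabla\cos\bar{\theta}_2|=O(t)$ at distance $O(t)$ from $p_2$.

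In terms of the angle identity, $\partial_e\cos\bar{\theta}_2=-\sin\bar{\theta}_2\,\partial_e\bar{\theta}_2$, where $\partial_e\bar{\theta}_2=\RN{2}(e,\bar{\nu}_2)/\sin\bar{\theta}_1$ is merely bounded. What vanishes is the factor $\sin\bar{\theta}_2=O(t)$, not $\RN{2}(e,\bar{\nu}_2)$. This is exactly the paper's proof: $\sin\bar{\theta}_2(\varphi_{s,t}(x))=O(t)$ times a displacement of size $O(t^2)$.

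\textbf{Consequences for the rest of your plan.} With this observation you need no special graph-to-graph choice of $\Psi$. Any $\Psi$ with $\Psi(p_2)=p_2$ and $D\Psi|_{p_2}=\mathrm{id}$ gives $O(t)\cdot O(t^2)=O(t^3)$ by the mean value theorem.

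Your fallback, by contrast, cannot work. $\Psi$ must carry $\partial\bar{\Sigma}$ into $\partial\Sigma_1$, and these surfaces, though tangent at $p_2$, generically separate quadratically because their second fundamental forms in $\partial M$ differ. So $|\Psi(y)-y|\ge\mathrm{dist}(y,\partial\Sigma_1)$ is generically of order $t^2$ for every admissible $\Psi$. No modification of $\Psi$ or of the $O(t^3)$ terms in $\Gamma_{s,t}$ can remove an $O(t^2)$ normal offset. The extra power of $t$ comes from the vanishing of $\nabla\cos\bar{\theta}_2$ at $p_2$, not from any choice of parametrization.
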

\begin{proof}
    By Taylor expansion, we have
    \begin{align*}
        \cos\bar{\rho}(\Psi(x))&-\cos\bar{\rho}(x)\\
        &=-\sin\bar{\theta}_2(\varphi_{s,t}(x))|\Psi(\varphi_{s,t}(x))-\varphi_{s,t}(x)|+O(|\Psi(\varphi_{s,t}(x))-\varphi_{s,t}(x)|^2).
        \end{align*}
        Note that
        \begin{align*}
            |\Psi(\varphi_{s,t}(x))-\varphi_{s,t}(x)|&=|0+D\Psi|_{p_2}\cdot \varphi_{s,t}(x)-\varphi_{s,t}(x)+O(t^2)\\
            &=O(t^2),
        \end{align*}
        we used the fact that $D\Psi|_{p_2}=Id$ and we denote $p_2=0$ for simplicity.
        
        Also, we have the Taylor expansion
        \begin{align*}
            \sin\bar{\theta}_2(\varphi_{s,t}(x))&=0+\cos\bar{\theta}_2(p_2)|\varphi_{s,t}(x)|+O(|\varphi_{s,t}(x)|)\\
            &=O(t).
        \end{align*}

        Therefore, we have 
        \[
        \cos\bar{\rho}(\Psi(x))-\cos\bar{\rho}(x)=O(t^3).
        \]
\end{proof}

\begin{cor}
     We have the following comparison of contact angles
    \begin{align}
        \cos\rho_{s,t}(\Psi(x))=\cos\bar{\rho}(\Psi(x))-4st^2\sum_{i}\frac{(m_{ij}n_{ij}x_j)^2}{\tilde{h}_{ii}\tilde{h}^{33}}+O(s^2)+O(t^3).
    \end{align}
\end{cor}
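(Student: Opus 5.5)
The corollary follows by combining the preceding proposition with the lemma just proved. The proposition gives the expansion of $\cos\rho_{s,t}(\Psi(x))$ in terms of $\cos\bar{\rho}(x)$, the value of the prescribed angle at the Euclidean point $\varphi_{s,t}(x)\in\partial\bar{\Sigma}$. The lemma says that replacing $\cos\bar{\rho}(x)$ by $\cos\bar{\rho}(\Psi(x))$, the value at the image point in $\partial\Sigma$, costs only $O(t^3)$. So the plan is to write
\[
\cos\rho_{s,t}(\Psi(x))=\cos\bar{\rho}(\Psi(x))+\bigl(\cos\bar{\rho}(x)-\cos\bar{\rho}(\Psi(x))\bigr)-4st^2\sum_{i}\frac{(m_{ij}n_{ij}x_j)^2}{\tilde{h}_{ii}\tilde{h}^{33}}+O(s^2)+O(t^3)
\]
and absorb the bracketed difference into the existing $O(t^3)$ error.

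The only point to check is that the $O(t^3)$ in the lemma is uniform in $x\in F_s$ and in $s\in(0,s_0)$. Otherwise the error could not be merged with the one already in the proposition. This uniformity holds for two reasons. First, $F_s$ is contained in a fixed bounded ellipsoid for $s$ small, since $m_{ij}n_{ij}=U_{ij}$ is positive definite. Second, the lemma's bound comes from $|\Psi(\varphi_{s,t}(x))-\varphi_{s,t}(x)|=O(t^2)$ and $\sin\bar{\theta}_2(\varphi_{s,t}(x))=O(t)$, and both constants depend only on $C^2$ bounds of $\Psi$ and $\bar{\theta}_2$ near $p_2$ together with $|\varphi_{s,t}(x)|\le Ct$.

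Both bounds use that $p_2$ is chosen so that $D\Psi|_{p_2}$ is the identity on $T_{p_2}\partial\bar{\Sigma}$, and that $\bar{\theta}_2(p_2)=0$, since $p_2$ maximizes the $x_3$-coordinate on the slice. Once uniformity is verified, the corollary is immediate. I expect no genuine obstacle here. The substantive work is the uniformity of the Taylor remainders, which is routine given the smoothness of $\Psi$ and $\bar{\theta}_2$ near $p_2$.
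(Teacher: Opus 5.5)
Your proposal is correct and matches the paper: the corollary is stated without separate proof, and it is exactly the preceding contact-angle proposition combined with the lemma $\cos\bar{\rho}(x)-\cos\bar{\rho}(\Psi(x))=O(t^3)$, with that difference absorbed into the existing error term. Your uniformity check over $x\in F_s$ and small $s$ is a useful detail the paper leaves implicit. The relevant $C^2$ bounds should be on $\cos\bar{\theta}_2=\langle\bar{\eta}_1,e_3\rangle$, which is smooth near $p_2$, not on $\bar{\theta}_2$ itself, which is only Lipschitz at the point where it vanishes.
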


Using almost identical computations as in Ko-Yao \cite{koyao}, we have the following lemma.

\begin{lem}
    Suppose $\Gamma_t$ is a family of hypersurfaces which is parametrized as 
    \[
    \Gamma_t:=\{(tx_1,tx_2,t^2\{m_{ij}x_ix_j+O(t^2)\}): x\in E_s\}
    \]
    then the mean curvature under the metric $\tilde{h}$ is 
    \[
    \tilde{H}_t(x)=-\sum_i\frac{2m_{ii}}{\tilde{h}_{ii}\sqrt{\tilde{h}^{33}}}+O(t).
    \]
\end{lem}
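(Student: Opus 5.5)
We would prove this exactly as Lemma \ref{graphmcformula} was proved in the first slicing, i.e.\ by the computation of Lemma 5.4 in Ko--Yao \cite{koyao}, now carried out in the three-dimensional ambient $(\Sigma_1,\tilde h)$ near $p_2$. The plan has four steps.

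First, because $\tilde h=\tilde h_0+O(t)$ and $\Gamma_t$ lies within distance $O(t)$ of $p_2$, the metric may be frozen to the constant matrix $\tilde h_0$. The Christoffel symbols of $\tilde h$ are bounded, so they contribute only $O(t)\cdot O(1)=O(t)$-size terms to the mean curvature. It therefore suffices to compute $\tilde H_t$ for the constant-coefficient metric $\tilde h_0$ and absorb the difference into the $O(t)$ error.

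Second, we write $\Gamma_t$ as the level set $\{F=0\}$ with $F(y)=y_3-G(y_1,y_2)$ and $G(y)=m_{ij}y_iy_j+O(|y|^4)$. For a constant metric the mean curvature is
\[
\tilde H=-\operatorname{div}_{\tilde h_0}\frac{\nabla F}{|\nabla F|_{\tilde h_0}}
=-\frac{1}{|\nabla F|}\Big(\tilde h_0^{ab}\partial_{ab}F-\frac{\tilde h_0^{ac}\tilde h_0^{bd}\partial_cF\,\partial_dF\,\partial_{ab}F}{|\nabla F|^2}\Big).
\]
On $\Gamma_t$ the first derivatives satisfy $\partial_iG=O(t)$ for $i=1,2$. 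Hence $\nabla F=\tilde h_0^{a3}+O(t)$ and $|\nabla F|^2=\tilde h^{33}+O(t)$. The Hessian is $\partial_{ab}F=-2m_{ab}$ for $a,b\le 2$ and $0$ otherwise, up to $O(t^2)$. Substituting, the mean curvature reduces to
\[
\frac{1}{\sqrt{\tilde h^{33}}}\Big(2\sum_{a,b\le2}\tilde h_0^{ab}m_{ab}-\frac{2\,\tilde h^{a3}\tilde h^{b3}m_{ab}}{\tilde h^{33}}\Big)+O(t),
\]
with the overall sign fixed by the orientation convention of Lemma \ref{graphmcformula}.

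Third, we simplify this using the block structure of $\tilde h_0$ from property (3) of $\Psi$, in which $\tilde h_{12}=0$. The combination
\[
\tilde h^{ab}-\frac{\tilde h^{a3}\tilde h^{b3}}{\tilde h^{33}},\qquad a,b\le 2,
\]
is exactly the inverse of the restricted metric $(\tilde h_{ab})_{a,b\le2}$. Since $\tilde h_{12}=0$, this restricted metric is diagonal, so the combination equals $\delta_{ab}/\tilde h_{aa}$. Thus only the diagonal entries $m_{ii}$ survive, and we obtain $-\sum_i 2m_{ii}/(\tilde h_{ii}\sqrt{\tilde h^{33}})+O(t)$, as claimed.

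The main obstacle is the bookkeeping of error orders: the $O(t^2)$ term inside the braces of the parametrization, and the $O(t)$ metric variation, must each contribute only $O(t)$ to the final expression. After rescaling, the $O(t^2)$ perturbation of the graph function contributes $O(t)$ to its Hessian in the original variables, and the derivative of the metric contributes $O(1)\cdot|\nabla F|$-weighted terms of size $O(t)$. We would verify these two estimates explicitly, following \cite{koyao}.
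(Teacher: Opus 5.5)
Your Steps~2 and~3 are correct, and they are the computation the paper intends. The paper only cites the constant-coefficient graph computation of Ko--Yao, just as for Lemma~\ref{graphmcformula}, which is stated explicitly for the frozen metric $g(p_1)$. The combination $\tilde h^{ab}-\tilde h^{a3}\tilde h^{b3}/\tilde h^{33}$ is indeed the inverse of the block $(\tilde h_{ab})_{a,b\le 2}$, and that block is diagonal because $\tilde h_{12}=0$. Your intermediate sign differs from the target only because of your $-\operatorname{div}$ convention.

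The gap is Step~1, together with the closing remark meant to justify it: freezing the metric does not cost only $O(t)$.
\begin{itemize}
\item The hypothesis $\tilde h=\tilde h_0+O(t)$ on the $O(t)$-neighbourhood of $p_2$ is only $C^0$ closeness. The first derivatives of $\tilde h$ remain of order one there.
\item In the level-set formula these derivatives multiply $\partial_3F=1$, not the small tangential components $\partial_1F,\partial_2F$. Your phrase ``$|\nabla F|$-weighted terms of size $O(t)$'' conflates the two, since $|\nabla F|\approx\sqrt{\tilde h^{33}}$ is of order one.
\item Rescaling $y=tx$ makes the Christoffel term and the curvature term both $O(t)$. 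Scaling back makes both $O(1)$.
\end{itemize}

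Concretely, write $\phi(y_1,y_2)=(y_1,y_2,G(y_1,y_2))$. Take the unit normal $N$ with $N^\flat=dy_3/\sqrt{\tilde h^{33}}+O(t)$, and let $\Lambda^k_{ij}$ denote the Christoffel symbols of $\tilde h$. Then
\[
A_{ab}=\tilde h(\nabla_{\phi_a}\phi_b,N)=\frac{\partial_{ab}G+\Lambda^3_{ab}(p_2)}{\sqrt{\tilde h^{33}}}+O(t).
\]
Hence, with the paper's sign,
\[
\tilde H_t=-\sum_{a=1}^{2}\frac{2m_{aa}+\Lambda^3_{aa}(p_2)}{\tilde h_{aa}\sqrt{\tilde h^{33}}}+O(t).
\]
So for the actual metric $\tilde h$ the claimed expansion fails by an order-one term, unless $\sum_a\Lambda^3_{aa}(p_2)/\tilde h_{aa}=0$.

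Your argument therefore proves the lemma only for the frozen metric $\tilde h_0$. You should drop Step~1 and state the result that way, as in Lemma~\ref{graphmcformula}.

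If you want information about $\tilde h$ itself, the correct statement is a comparison. The extra term depends only on $p_2$ and the plane $\{y_3=0\}$. It therefore cancels, up to $O(t)$, in the difference of the $\tilde h$-mean curvatures of two such graphs at a common point, for instance the leaf $\Gamma_{s,t}$ and $\partial\Sigma$. This is exactly the passage from Proposition~\ref{mccomparisong} to \eqref{mcgeuclcomparison}, and that difference is what the subsequent corollaries actually use.
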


As a corollary, we compute the mean curvature of $\Gamma_{s,t}$.
\begin{cor}
    For small $s,t>0$, the mean curvature of $\Sigma_{s,t}$ is computed as
    \begin{align}
        \tilde{H}_{s,t}^{\tilde{h}}(x)=-\frac{1}{\sqrt{\tilde{h}^{33}}}\left(\frac{2m_{11}(n_{11}-1)}{\tilde{h}_{11}}+\frac{2m_{22}(n_{22}-1)}{\tilde{h}_{22}}\right)+O(s)+O(t).
    \end{align}
    Let $s,t\to 0$, we have 
    \begin{align}
\tilde{H}_{p_2}:=\lim_{(s,t)\to(0,0)}\tilde{H}_{s,t}^{\tilde{h}}(x)=-\frac{1}{\sqrt{\tilde{h}^{33}}}\left(\frac{2m_{11}(n_{11}-1)}{\tilde{h}_{11}}+\frac{2m_{22}(n_{22}-1)}{\tilde{h}_{22}}\right).    \end{align}
\end{cor}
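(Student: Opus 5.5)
This corollary is the two-dimensional analogue of Lemma \ref{graphmcformula}, applied to the graphical leaf $\Gamma_{s,t}$, whose height function has quadratic coefficients $m_{ij}(n_{ij}(1+s)-1)$. So the plan is to take the mean curvature lemma stated just before (for graphs $(tx_1,tx_2,t^2 m_{ij}x_ix_j+O(t^4))$ under $\tilde h$) and replace $m_{ij}$ by $m'_{ij}:=m_{ij}(n_{ij}(1+s)-1)$. The lemma gives
\[
\tilde H^{\tilde h}_{s,t}(x)=-\sum_{i=1}^{2}\frac{2m'_{ii}}{\tilde h_{ii}\sqrt{\tilde h^{33}}}+O(t).
\]
Only the diagonal coefficients enter, because the pushforward metric $\tilde h_0$ has vanishing $(1,2)$ entry. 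The off-diagonal coefficients $m'_{12}$ therefore do not contribute to the leading-order trace.

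Before invoking the lemma, I will check that its hypotheses hold for $\Gamma_{s,t}$. The horizontal components of $\varphi_{s,t}$ are $tx_i+O(t^3)$ instead of exactly $tx_i$. After reparametrizing, this changes the height by $O(t^4)$, which is absorbed into the $O(t^2)$ error inside the braces. Next, $\tilde h=\tilde h_0+O(t)$ on the support of the leaf. Since the leading mean curvature comes from second derivatives of a height of size $t^2$ divided by $t^2$, the metric error, together with the Christoffel symbols of $\tilde h$ (which are bounded), contributes only $O(t)$. I expect this error bookkeeping to be the only real obstacle: one has to confirm that the Christoffel terms multiply first derivatives of the height, which are $O(t)$, so the metric is not assumed flat at leading order. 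I will handle this exactly as in Ko-Yao \cite{koyao}, Lemma 5.4, by writing $H=\tilde h^{ab}\langle\nabla_{\partial_a}\partial_b\varphi,N\rangle$ in coordinates and noting that $\Gamma^{k}_{ab}\partial_k\varphi$ is paired with the normal, which is $e_3$ up to $O(t)$.

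Next, expanding $m'_{ii}=m_{ii}(n_{ii}-1)+s\,m_{ii}n_{ii}$ isolates the $s$-dependence as a bounded term times $s$. This yields the stated formula with an $O(s)+O(t)$ error.

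Finally, letting $(s,t)\to(0,0)$ removes both error terms, which gives $\tilde H_{p_2}$. The limit is independent of $x\in F_s$, since the leading term is a constant built from $\tilde h_0$ at $p_2$ and the fixed matrices $M$ and $N$.
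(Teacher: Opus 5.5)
Your argument is the paper's: the corollary comes from feeding $m_{ij}(n_{ij}(1+s)-1)$ into the preceding graph lemma, splitting off $s\,m_{ii}n_{ii}$ as the $O(s)$ term, and letting $(s,t)\to(0,0)$. Your reparametrization check and your remark that only diagonal terms survive because $\tilde h_{12}=0$ are both fine.

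One caution about your aside on the metric error, which you do not need since the lemma is taken as given. The ambient Christoffel terms with both lower indices tangential enter $\langle\nabla_{\partial_a}\partial_b\varphi,N\rangle$ with no factor of the height's gradient. Their traced contribution is therefore $O(1)$ in general: it is the mean curvature at $p_2$ of the coordinate plane $\{x_3=0\}$ in $\tilde h$. It is $O(t)$ only if $\partial\tilde h(p_2)=0$ in the chosen coordinates, or if one works with the frozen metric $\tilde h_0$ and compares differences, as the paper does in the first slicing.
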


We have following formula relating a mean curvature on $\partial \Sigma$ and that on $\partial M$. It follows from analogous argument with Lemma 2.1 in \cite{CW1} and the definition of the first eigenfunction of a minimizer.
\begin{prop} Suppose $\Sigma$ is a minimizer of $\mathcal{A}_{\bar{\theta}_{1}}$ on $M$ and $f_{1}$ is the first eigenfunction of its stability operator. Then we have a following formula at $p_{2} \in \partial \Sigma$.
\begin{align} \label{bdcurvature1steigen}
\tilde{H}_{\partial \Sigma}^{\tilde{h}}(p_{2}) = \frac{f_{1}^{-\frac{1}{2}}}{\sin \bar{\theta}_{1}} \left( H_{\partial M}(p_{2}) - \partial_{\nu} \bar{\theta}_{1} \right).
\end{align}
\begin{proof}
    We compute as
    \begin{align}
    \tilde{H}_{\partial \Sigma}^{\tilde{h}}(p_{2}) &= f_{1}^{-\frac{1}{2}}(H_{\partial \Sigma}^{h}(p_{2}) + \langle \nabla \log f_{1}, \, \eta\rangle) \label{confdeformmc} \\ \nonumber &= f_{1}^{-\frac{1}{2}}\left(\sum_{i=1}^{2} \langle \nabla_{\tau_{i}} \tau_{i}, \, \eta \rangle + \langle \nabla \log f_{1}, \, \eta\rangle\right) \\ \label{nvectordecomp}
    &= f_{1}^{-\frac{1}{2}}\left(\frac{1}{\sin \bar{\theta}_{1}}\sum_{i=1}^{2} \langle \nabla_{\tau_{i}} \tau_{i}, \,   X - \cos \bar{\theta}_{1} N_{1}  \rangle + \langle \nabla \log f_{1}, \, \eta\rangle\right) \\ \label{minimalitysigma}
    &= f_{1}^{-\frac{1}{2}}\left(\frac{1}{\sin \bar{\theta}_{1}} (H_{\partial M}(p_{2}) - A_{\partial M}(\nu,\nu)) + \cot \bar{\theta}_{1} A_{\Sigma}(\eta,\eta)+ \langle \nabla \log f_{1}, \, \eta\rangle\right)  \\ \label{firsteigenfunctionsigma} &= \frac{f_{1}^{-\frac{1}{2}}}{\sin \bar{\theta}_{1}} \left( H_{\partial M}(p_{2}) - \partial_{\nu} \bar{\theta}_{1} \right),
    \end{align}
    where (\ref{confdeformmc}) follows from the mean curvature by conformal deformation, (\ref{nvectordecomp}) is by $X = \sin \bar{\theta}_{1} \eta + \cos \bar{\theta}_{1} N_{1}$ at $p_{2}$, (\ref{minimalitysigma}) is by the minimality of $\Sigma$, and (\ref{firsteigenfunctionsigma}) is by the definition of $f_{1}$ with (\ref{secondvariationtheta1}) and (\ref{q1def}).  
\end{proof}
\end{prop}

We obtain the following curvature comparison that holds at an extreme point $p_{2}$.
\begin{prop} [Curvature comparison at an extreme point] \label{p2curvcomparison}
    At the point $p_2$, we have the following accurate curvature comparison 
    \begin{align}\label{eq: mean curvature comparison}
        H_{\partial M}&-\partial_{\nu}\bar{\theta}_1\geq \\
        \nonumber& \frac{\sqrt{(\sqrt{\tilde{h}_{11}}\RN{2}(\bar{\tau}_2,\bar{\tau}_2)+\sqrt{\tilde{h}_{22}}\RN{2}(\bar{\tau}_1,\bar{\tau}_1))^2+(\sqrt{\tilde{h}_{11}}-\sqrt{\tilde{h}_{22}})^2\RN{2}(\bar{\tau}_1,\bar{\tau}_2)^2}}{\sqrt{\tilde{h}_{11}\tilde{h}_{22}}}.
    \end{align}
    The equality holds if $\tilde{h}_{11} = \tilde{h}_{22}$.
\end{prop}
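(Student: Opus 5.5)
We plan to run the argument of Proposition \ref{prop:local estimates} at $p_2$, but only over the $2$-dimensional tangent space $T_{p_2}\partial\Sigma_1$, where we can now exploit that $T_{p_2}\partial\Sigma_1=T_{p_2}\partial\bar{\Sigma}_1$ (our choice of $p_2$ as the point of maximal $x_4$-coordinate gives $\nabla^{\Sigma_1}x_4(p_2)=0$).

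\textbf{Step 1: reduce to a trace over the common plane.} Write $A:=H_{\partial M}(p_2)-\partial_{\nu}\bar{\theta}_1$. By the Angle Identities (Proposition \ref{prop: Angle Identities}) with $k=1$, $\partial_\nu\bar\theta_1=\RN{2}(\nu,\bar\nu_1)$. Take an orthonormal frame $\{\tau_1,\tau_2,\nu\}$ of $T_{p_2}\partial M$ for $g$ and a Euclidean frame $\{\bar\tau_1,\bar\tau_2,\bar\nu_1\}$, where $\tau_1,\tau_2$ span the common plane $T_{p_2}\partial\Sigma_1=T_{p_2}\partial\bar\Sigma_1$. Proposition \ref{prop:local estimates} then gives
\[
A\ \ge\ \RN{2}(\tau_1,\bar\tau_1)+\RN{2}(\tau_2,\bar\tau_2).
\]
Since the two frames are not parallel in general, this has to be sharpened. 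Because the tangent planes coincide, we may choose $\bar\tau_i$ to be the Euclidean unit vectors along the coordinate axes in which $\tilde h_0$ has the block form of item (3) of the local diffeomorphism. This is legitimate because $\tilde h=f_1h$ is a conformal rescaling, so $f_1$ only enters as an overall factor, to be cancelled against the $f_1^{-1/2}$ in \eqref{bdcurvature1steigen}. With this choice, $\tau_i=\bar\tau_i/\sqrt{\tilde h_{ii}}$ up to normalization.

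\textbf{Step 2: exploit the rotation freedom.} The trace $\sum_i\RN{2}(\tau_i,\bar\tau_i)$ is invariant only under simultaneous rotations. However, the boundary comparison applied at $p_2$ in every direction of the plane, as in the proof of Proposition \ref{prop:local estimates}, gives $A\ge \RN{2}(\tau,\bar\tau)+\RN{2}(\tau',\bar\tau')$ for every pair $(\tau,\tau')$ and $(\bar\tau,\bar\tau')$ related by a common rotation angle $\phi$ in the two metrics. We will therefore maximize the resulting trigonometric expression over $\phi$. Concretely, writing
\[
\tau(\phi)=\frac{\cos\phi\,\bar\tau_1}{\sqrt{\tilde h_{11}}}+\frac{\sin\phi\,\bar\tau_2}{\sqrt{\tilde h_{22}}},
\]
and defining $\tau'(\phi)$ analogously, the sum becomes $\alpha+\beta\cos2\phi+\gamma\sin2\phi$. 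Its supremum is $\alpha+\sqrt{\beta^2+\gamma^2}$, and after simplification this should reduce to the square-root expression in \eqref{eq: mean curvature comparison}. Here the diagonal terms $\RN{2}(\bar\tau_j,\bar\tau_j)$ pick up the cross weights $\sqrt{\tilde h_{ii}}$, while the off-diagonal term carries the factor $(\sqrt{\tilde h_{11}}-\sqrt{\tilde h_{22}})$.

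\textbf{Step 3: the equality case.} If $\tilde h_{11}=\tilde h_{22}$, the off-diagonal term vanishes. The right-hand side then equals $(\RN{2}(\bar\tau_1,\bar\tau_1)+\RN{2}(\bar\tau_2,\bar\tau_2))/\sqrt{\tilde h_{11}}$, which is rotation invariant and matches Proposition \ref{prop:local estimates} exactly.

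\textbf{Main obstacle.} The hardest part is the algebra in Step 2: confirming that the $\phi$-optimized bound, together with the normalizations from $\tilde h^{33}$ and $f_1$, yields precisely the stated square root. If the direct optimization does not close, the fallback is to diagonalize the $2\times2$ block $(\RN{2}(\bar\tau_i,\bar\tau_j))$ and compute the bound in its eigenbasis.
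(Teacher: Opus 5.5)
Your Step 1 matches the paper: the angle identity gives $\partial_\nu\bar\theta_1=\RN{2}(\nu,\bar\nu_1)$, and you then apply Proposition \ref{prop:local estimates} with $\tau_1,\tau_2$ and $\bar\tau_1,\bar\tau_2$ spanning the common plane $T_{p_2}\partial\Sigma_1=T_{p_2}\partial\bar\Sigma_1$. Step 2, however, does not work as written. Write $\RN{2}_{ij}=\RN{2}(\bar\tau_i,\bar\tau_j)$. Suppose $\tau,\tau'$ are obtained by rotating the $g$-orthonormal pair $e_i=\bar\tau_i/\sqrt{h_{ii}}$ by $\phi$, and $\bar\tau,\bar\tau'$ by rotating $\bar\tau_1,\bar\tau_2$ by the same $\phi$. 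Then a direct computation gives $\RN{2}(\tau,\bar\tau)+\RN{2}(\tau',\bar\tau')=\RN{2}_{11}/\sqrt{h_{11}}+\RN{2}_{22}/\sqrt{h_{22}}$ for every $\phi$, because the $\cos2\phi$ and $\sin2\phi$ terms cancel identically. This is exactly the simultaneous-rotation invariance you quote one sentence earlier. So $\beta=\gamma=0$, and your optimization only yields
\[
A\ \ge\ \frac{\RN{2}_{11}}{\sqrt{h_{11}}}+\frac{\RN{2}_{22}}{\sqrt{h_{22}}}.
\]
This is strictly weaker than \eqref{eq: mean curvature comparison} whenever $h_{11}\neq h_{22}$ and $\RN{2}_{12}\neq0$, since the term $(\sqrt{h_{11}}-\sqrt{h_{22}})^2\RN{2}_{12}^2$ never appears. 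The target is also not of the form $\alpha+\sqrt{\beta^2+\gamma^2}$ you anticipate, so the algebra in your ``main obstacle'' cannot close. Your fallback of diagonalizing $(\RN{2}_{ij})$ does not help either, as long as both frames are rotated together.

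The missing idea is that Proposition \ref{prop:local estimates} holds for an \emph{arbitrary, independent} pair of frames, so you must optimize over the \emph{relative} rotation. Keep $\bar\tau_1,\bar\tau_2,\bar\nu_1$ fixed. Take $\tau=\cos\phi\,e_1+\sin\phi\,e_2$ and $\tau'=-\sin\phi\,e_1+\cos\phi\,e_2$, completed by the $g$-unit normal $\nu$ of the plane in $T_{p_2}\partial M$. Then
\[
\RN{2}(\tau,\bar\tau_1)+\RN{2}(\tau',\bar\tau_2)=\alpha\cos\phi+\gamma\sin\phi,\qquad \alpha=\frac{\RN{2}_{11}}{\sqrt{h_{11}}}+\frac{\RN{2}_{22}}{\sqrt{h_{22}}},\quad \gamma=\Big(\frac{1}{\sqrt{h_{22}}}-\frac{1}{\sqrt{h_{11}}}\Big)\RN{2}_{12}.
\]
The maximum $\sqrt{\alpha^2+\gamma^2}$ is exactly the right-hand side of \eqref{eq: mean curvature comparison} once you multiply numerator and denominator by $\sqrt{h_{11}h_{22}}$. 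This is the paper's step: by von Neumann's trace inequality, the supremum over independent frames is the trace norm of $\operatorname{diag}(h_{11}^{-1/2},h_{22}^{-1/2})\,(\RN{2}_{ij})$.

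Three smaller points. First, the frames in Proposition \ref{prop:local estimates} must be $g$-orthonormal, so you should work with $h_{ii}$ rather than $\tilde h_{ii}$. Second, the stated $\tilde h_{ii}$-version agrees with the $h_{ii}$-version only after normalizing $f_1(p_2)=1$: its right-hand side scales like $f_1^{-1/2}$ while the left-hand side does not. Cancelling against \eqref{bdcurvature1steigen} does not fix this, since that formula plays no role here. Third, your Step 3 only shows that the right-hand side collapses to a trace when $\tilde h_{11}=\tilde h_{22}$. It does not show the inequality is attained; for that, the paper appeals to the equality case of Proposition \ref{prop:local estimates}.
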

\begin{proof}
    Without loss of generality, we assume $\tilde{h}_{11}\geq \tilde{h}_{22}$. Let $k_1=\sqrt{\frac{\tilde{h}_{22}}{\tilde{h}_{11}}}=\sqrt{\frac{h_{22}}{h_{11}}}$, we have
    \begin{align}
        \nonumber h_{11}h_{22}(H_{\partial M}-\partial_{\nu}\bar{\theta}_1)^2&= h_{11}h_{22}(H_{\partial M}-\RN{2}(\nu, \bar{\nu}))^2\\ \nonumber & \ge h_{11} h_{22} (\sup_{\text{span} \{\tau_{1}, \tau_{2}\}= \text{span} \{\bar{\tau}_{1}, \bar{\tau}_{2}\}} \RN{2}(\tau_{1},\bar{\tau}_{1}) +\RN{2}(\tau_{2},\bar{\tau}_{2}))^{2} \\ \label{tracesupremum} &=(\sqrt{h_{22}}\RN{2}_{11}+\sqrt{h_{11}}\RN{2}_{22})^2+(\sqrt{h_{11}}-\sqrt{h_{22}})^2\RN{2}_{12}^2,
    \end{align}
    where we use $\RN{2}_{ij}=\RN{2}(\bar{\tau}_i,\bar{\tau}_j)$ for simplicity. The equality (\ref{tracesupremum}) comes from Von Neumann inequality of the trace. The equality condition follows from the equality condition $H_{\partial M}^{2} g|_{F^{\#}\RN{2}_{+}} = \bar{H}_{\partial M}^{2} g_{E}|_{\RN{2}_{+}}$ of Proposition (\ref{prop:local estimates}).
\end{proof}

With the curvature comparison Proposition \ref{p2curvcomparison}, we have the following nonnegative mean curvature foliation.

\begin{cor} \label{cor: non-negative asymptotics}
    \begin{align}\label{eq: non-negative asymptotics}
\tilde{H}_{p_2}:=\lim_{(s,t)\to(0,0)}\tilde{H}_{s,t}^{\tilde{h}}(x)=-\frac{2}{\sqrt{\tilde{h}^{33}}}\left(\frac{m_{11}(n_{11}-1)}{\tilde{h}_{11}}+\frac{m_{22}(n_{22}-1)}{\tilde{h}_{22}}\right)\geq 0.
    \end{align}
    The equality holds if $\tilde{h}_{11} = \tilde{h}_{22}$.
\end{cor}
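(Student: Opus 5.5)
To prove Corollary \ref{cor: non-negative asymptotics}, I would compare the limiting mean curvature $\tilde H_{p_2}$ of the leaves with the weighted boundary mean curvature $\tilde H^{\tilde h}_{\partial\Sigma}(p_2)$. This runs parallel to the first slicing (Corollary \ref{limitmcleavecor} and Proposition \ref{posmc}), with the role of $H_{\partial M}$ now played by $H_{\partial M}-\partial_\nu\bar\theta_1$ through formula \eqref{bdcurvature1steigen}.

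\emph{Step 1: split the limit.} Write
\[
\tilde H_{p_2}=\Big(-\tfrac{2}{\sqrt{\tilde h^{33}}}\sum_i \tfrac{m_{ii}n_{ii}}{\tilde h_{ii}}\Big)+\tfrac{2}{\sqrt{\tilde h^{33}}}\sum_i\tfrac{m_{ii}}{\tilde h_{ii}}.
\]
Apply the graph mean curvature lemma to the parametrization of $\partial\bar\Sigma$ near $p_2$. This identifies the second term with the mean curvature of $\partial\Sigma$ at $p_2$ under $\tilde h$. Because $T_{p_2}\partial\Sigma_1=T_{p_2}\partial\bar\Sigma_1$ and $D\Psi|_{p_2}=\mathrm{id}$, the two boundaries agree to second order in this chart. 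By \eqref{bdcurvature1steigen}, this term equals $\frac{f_1^{-1/2}}{\sin\bar\theta_1}(H_{\partial M}-\partial_\nu\bar\theta_1)$, up to the Euclidean-versus-$\tilde h$ normalisation. Then run exactly the bookkeeping of Corollary \ref{mcgeuclcomparison}: add and subtract the Euclidean curvature $2\,\mathrm{Tr}(M)$. This turns the positive part into $H_{\partial M}-\partial_\nu\bar\theta_1$ measured against the Euclidean quantity.

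\emph{Step 2: rewrite the negative part.} By definition of $N$ we have $m_{ii}n_{ii}=U_{ii}=R_{ii}\tilde h_{ii}\tilde h^{33}$, since $Q$ is diagonal. So the first term is $-2\sqrt{\tilde h^{33}}\,\mathrm{Tr}(R)$, where $R=(Q^{1/2}M^TMQ^{1/2})^{1/2}$. It therefore suffices to show
\[
H_{\partial M}-\partial_\nu\bar\theta_1\ \ge\ \mathrm{Tr}\big((Q^{1/2}M^2Q^{1/2})^{1/2}\big)
\]
after the common rescaling. The key step is to evaluate the right-hand side explicitly for $2\times2$ matrices. For a $2\times 2$ positive matrix $P$ one has $\mathrm{Tr}(P^{1/2})=\sqrt{\mathrm{Tr}P+2\sqrt{\det P}}$. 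Take $P=Q^{1/2}M^2Q^{1/2}$ with $Q=\mathrm{diag}(q_1,q_2)$, and write $M$ in terms of $\RN{2}_{ij}$, using $-2M$ as the second fundamental form. A direct expansion gives
\[
\mathrm{Tr}P+2\sqrt{\det P}=\big(\sqrt{q_1}m_{11}+\sqrt{q_2}m_{22}\big)^2+\big(\sqrt{q_1}-\sqrt{q_2}\big)^2m_{12}^2 ,
\]
up to the factor coming from $\sqrt{q_1q_2}\det M$. With $q_i=(\tilde h_{ii}\tilde h^{33})^{-1}$, this is precisely the right-hand side of Proposition \ref{p2curvcomparison}, after the normalisations.

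Conclude nonnegativity from that proposition. Equality when $\tilde h_{11}=\tilde h_{22}$ follows from the equality case there: the cross term vanishes and the von Neumann trace bound is attained.

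The main obstacle is the normalisation bookkeeping: consistently tracking the factors $f_1^{-1/2}$, $\sin\bar\theta_1$, $\sqrt{\tilde h^{33}}$ and $\tilde h_{ii}$ versus $h_{ii}$ between \eqref{bdcurvature1steigen}, the graph formula, and Proposition \ref{p2curvcomparison}. I would first fix a normalisation at $p_2$, namely a conformal rescaling so that $\tilde h^{33}=1$, using that $\sin\bar\theta_1$ and $f_1$ are fixed positive constants at the point. Then I would check that the $2\times2$ identity matches term by term.
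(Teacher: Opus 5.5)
Your proposal follows the paper's proof. Both write $\tilde H_{p_2}$ as the $\tilde h$-mean curvature of $\partial\Sigma$ at $p_2$, identified through \eqref{bdcurvature1steigen} with $f_1(p_2)=1$, minus the $n_{ii}$-term; both evaluate that term in closed form as $\sqrt{(\sqrt{\tilde h_{11}}m_{22}+\sqrt{\tilde h_{22}}m_{11})^2+(\sqrt{\tilde h_{11}}-\sqrt{\tilde h_{22}})^2m_{12}^2}/\sqrt{\tilde h_{11}\tilde h_{22}}$, and both reduce nonnegativity to Proposition \ref{p2curvcomparison}. Your identity $\mathrm{Tr}\,P^{1/2}=\sqrt{\mathrm{Tr}\,P+2\sqrt{\det P}}$ (valid since $\det M>0$) is only a quicker route to the paper's closed form, and the identification of the $m_{ii}$-term with $\tilde H_{\partial\Sigma}$ holds because $\Psi$ carries $\partial\bar\Sigma$ onto $\partial\Sigma$, not because of tangency at $p_2$ with $D\Psi|_{p_2}=\mathrm{id}$, which only gives first-order agreement.
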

\begin{proof}
    By definition of $M$, we know
    \[
    m_{ij}=-\frac{1}{2\sin\bar{\theta}_1(p_2)}\RN{2}(\bar{\tau}_i,\bar{\tau}_j).
    \]
    By definition of $n_{ij}$, we have
    \[
    n_{ii}=\sqrt{\tilde{h}^{33}}\frac{\tilde{h}_{ii}(m_{11}m_{22}-m_{12}^2)+\sqrt{\tilde{h}_{11}\tilde{h}_{22}}(m_{ii}^2+m_{12}^2)}{m_{ii}\sqrt{(\sqrt{\tilde{h}_{11}}m_{22}+\sqrt{\tilde{h}_{22}}m_{11})^2+(\sqrt{\tilde{h}_{11}}-\sqrt{\tilde{h}_{22}})^2m_{12}^2}}.
    \]

    Then we have
    \begin{align*}
        \sum_{i=1}^2&\frac{m_{ii}n_{ii}}{\sqrt{\tilde{h}^{33}}\tilde{h}_{ii}}=\\
        &\frac{2(m_{11}m_{22}-m_{12}^2)+\left(\sqrt{\tilde{h}_{11}/\tilde{h}_{22}}+\sqrt{\tilde{h}_{22}/\tilde{h}_{11}}\right)m_{12}^2+\sqrt{\tilde{h}_{22}/\tilde{h}_{11}}m_{11}^2+\sqrt{\tilde{h}_{11}/\tilde{h}_{22}}m_{22}^2}{\sqrt{(\sqrt{\tilde{h}_{11}}m_{22}+\sqrt{\tilde{h}_{22}}m_{11})^2+(\sqrt{\tilde{h}_{11}}-\sqrt{\tilde{h}_{22}})^2m_{12}^2}}\\
        &=\frac{\sqrt{(\sqrt{\tilde{h}_{11}}m_{22}+\sqrt{\tilde{h}_{22}}m_{11})^2+(\sqrt{\tilde{h}_{11}}-\sqrt{\tilde{h}_{22}})^2m_{12}^2}}{\sqrt{\tilde{h}_{11}\tilde{h}_{22}}}\\
        &=\frac{\sqrt{(\sqrt{{h}_{11}}m_{22}+\sqrt{{h}_{22}}m_{11})^2+(\sqrt{{h}_{11}}-\sqrt{{h}_{22}})^2m_{12}^2}}{\sqrt{{h}_{11}{h}_{22}}}
    \end{align*}

Also note that
\begin{align*}
    \sum_{i=1}^2&\frac{m_{ii}}{\sqrt{\tilde{h}^{33}}\tilde{h}_{ii}}=\tilde{H}_{\partial\Sigma}=\frac{f_1^{-1/2}(p_2)}{2\sin\bar{\theta}_1(p_2)}\left(H_{\partial M}-\partial_{\nu}\bar{\theta}_1\right)\\
    &=\frac{1}{2\sin\bar{\theta}_1}\left(H_{\partial M}-\partial_{\nu}\bar{\theta}_1\right),
\end{align*}
where we rescale $f_1$ to be one at the point $p_2$. We now conclude that \eqref{eq: non-negative asymptotics} is equivalent to \eqref{eq: mean curvature comparison} and the equality follows from the same condition.
\end{proof}
Since we have the equality condition $\tilde{h}_{11} = \tilde{h}_{22}$ at $p_{2}$, we can apply the construction of CMC foliation which does not depend on the dimension as in \cite{koyao}.
\begin{cor} \label{nontrivialsecondminimizer}
    There exists a non-negative mean curvature foliation $\Gamma_t$ with prescribed angle $\bar{\theta}_2$ along $\partial\Gamma_t$, and there exists a nontrivial minimizer of $\mathcal A_{\bar{\theta}_2}$.
\end{cor}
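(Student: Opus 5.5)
The plan is to follow the first slicing (Propositions \ref{nonzeromcfoliation} and \ref{nontrivialfirstminimizer}), now on $(\Sigma_1,\tilde h)$ near the extreme point $p_2$. Because $p_2$ maximizes $x_4$ on $\partial\Sigma_1$ and the $x_3$-axis is chosen accordingly, $T_{p_2}\partial\Sigma_1=T_{p_2}\partial\bar\Sigma_1$. This is what lets us compare the graphical leaves $\Gamma_{s,t}$ with the Euclidean slice $\partial\bar\Sigma_1$. We take $p_2=0$ and normalize $f_1(p_2)=1$.

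\textbf{Case $\tilde h_{11}\neq\tilde h_{22}$.} Corollary \ref{cor: non-negative asymptotics} gives $\tilde H_{p_2}\geq 0$, and I expect the inequality to be strict here. I would verify this from the Von Neumann trace step in Proposition \ref{p2curvcomparison}: with unequal weights, the supremum over rotated frames exceeds the diagonal value unless $\RN{2}_{12}=0$ and the weighted means coincide. Given strictness, continuity gives $\tilde H_{s,t}>0$ for small $s,t>0$. The contact angle comparison above gives $\cos\rho_{s,t}<\cos\bar\theta_2$ along $\partial\Gamma_{s,t}$, since the correction term $-4st^2\sum_i(\cdot)^2$ dominates $O(t^3)$ once $s\gg t$ but $s^2\ll st^2$. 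One must choose $s=s(t)$ in this window, and I would check that the window is nonempty. If it is empty, I would fall back to the CMC construction.

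\textbf{Case $\tilde h_{11}=\tilde h_{22}$.} In this equality case the metric is conformally Euclidean on $T_{p_2}\partial\bar\Sigma_1$. I would invoke the dimension-independent CMC foliation argument of Proposition 5.5 in \cite{koyao}. We perturb the leaves $\Gamma_t$ by the implicit function theorem, applied to the weighted capillary operator $(H_{\tilde h}+\langle\nabla\log f_1,N_2\rangle,\ \cos\rho-\cos\bar\theta_2)$ on graphs over the rescaled domain. This yields leaves of constant weighted mean curvature $\ge0$ with exact contact angle $\bar\theta_2$. The linearized operator is the Euclidean Laplacian with oblique boundary condition on a convex domain, and it is invertible modulo constants.

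\textbf{Existence of the minimizer.} Following Proposition 5.6 of \cite{koyao}, the foliation is used as a barrier. By the first variation formula \eqref{eq: bottom first variation}, the region $U_t$ cut off by a leaf has $\mathcal A_{\bar\theta_2}(U_t)<0$ for small $t$, integrating the variation along the foliation from the point $p_2$. Hence $\mathcal I_2<0$, which rules out the trivial competitor. A minimizing sequence then has a limit by BV compactness with weight $f_1>0$ bounded above and below. The limit is regular by the three-dimensional capillary regularity of Taylor \cite{taylor1977boundary} for $\Sigma_1^3$. The maximum principle against the mean-convex leaves keeps the minimizer from collapsing onto $p_2$ or escaping.

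The main obstacle I expect is strictness in the unequal case, together with compatibility of the $s,t$ scales. If strictness fails, I would perturb the choice of the $x_3$-axis within the allowed rotations, or use the CMC construction in both cases.
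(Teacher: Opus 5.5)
Your architecture matches the paper's. You split into $\tilde h_{11}\neq\tilde h_{22}$ and $\tilde h_{11}=\tilde h_{22}$. In the first case you use the barrier from Corollary~\ref{cor: non-negative asymptotics} together with Proposition 5.6 of \cite{koyao}, and in the second the CMC foliation of Proposition 5.5 of \cite{koyao}. However, the two steps you flag as obstacles are exactly where your argument, as written, fails, and your fallbacks do not rescue it.

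\textbf{Strictness of $\tilde H_{p_2}>0$.} The Von Neumann step in Proposition~\ref{p2curvcomparison} is an identity. It evaluates the right-hand side $\sup\bigl(\RN{2}(\tau_1,\bar\tau_1)+\RN{2}(\tau_2,\bar\tau_2)\bigr)$. Showing that this supremum exceeds its value at some frame only makes the right-hand side larger, so it can never give $H_{\partial M}-\partial_\nu\bar\theta_1>\sup$.

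Strictness has to come from the other inequality in the chain. That is Proposition~\ref{prop:local estimates} with $\tau_3=\nu$, $\bar\tau_3=\bar\nu$, combined with $\partial_\nu\bar\theta_1=\RN{2}(\nu,\bar\nu)$ from Proposition~\ref{prop: Angle Identities}. In that proof, each bound $g_E(\tau_i,\bar\tau_i)\le|\tau_i|_{g_E}\le H_{\partial M}/\bar H_{\partial M}$ is weighted by an eigenvalue of $\RN{2}$. These eigenvalues are positive by strict convexity, so equality forces $\tau_i=(H_{\partial M}/\bar H_{\partial M})\bar\tau_i$ for all $i$. That means $g|_{T_{p_2}\partial M}$ is a constant multiple of $g_E$, which contradicts $\tilde h_{11}\neq\tilde h_{22}$; the ratio is unchanged by the factor $f_1$. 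The frames range over a compact set, so the inequality is strict at the maximizing frame, giving $\tilde H_{p_2}>0$. This is the equality mechanism the paper invokes.

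Your fallback of perturbing the $x_3$-axis is unavailable in any case. The axis is pinned by requiring $\bar\theta_2(p_2)=0$ and $T_{p_2}\partial\Sigma_1=T_{p_2}\partial\bar\Sigma_1$.

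\textbf{Scales.} Your window $t\ll s\ll t^2$ is empty. Falling back on a CMC foliation in the non-conformal case is unjustified: the paper, following \cite{koyao}, uses it only when $\tilde h_{11}=\tilde h_{22}$, and you give no control of the sign of the constant.

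The repair is that the term written $O(s^2)$ is really $O(s^2t^2)$. Both cosines are $1+O(t^2)$, and $s$ enters only through the factor $(1+s)^2$ in the $t^2$-coefficient. Using the identity from the contact-angle comparison, this gives
\[
\cos\rho_{s,t}(\Psi(x))-\cos\bar\rho(\Psi(x))=-2\bigl((1+s)^2-1\bigr)t^2\sum_i\Bigl(\sum_j m_{ij}x_j\Bigr)^2+O(t^3),
\]
with $O(t^3)$ uniform for small $s$. The $s^2$-part has the favorable sign, and the sum is bounded below on $\partial F_s$ since $(m_{ij})$ is nondegenerate.

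So fix $s>0$ small enough that the $O(s)$ term in $\tilde H_{s,t}$ is below $\tilde H_{p_2}/2$, then let $t\to 0$ with $t\ll s$. With these two repairs your proof coincides with the paper's.
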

\begin{proof}
    We obtain the nontrivial barrier when $\tilde{h}_{11} \neq \tilde{h}_{22}$ from Corollary \ref{cor: non-negative asymptotics} and can apply the arguments in Proposition 5.6 in \cite{koyao}. The CMC foliation arguments are identical to Proposition 5.5 in \cite{koyao}.
\end{proof}
\begin{proof}[Proof of Theorem \ref{main slicing thm}] By Proposition  \ref{nontrivialfirstminimizer} and Corollary \ref{nontrivialsecondminimizer}, there exists a nontrivial minimizer for each slicing. Moreover, the conditions (1) and (2) are satisfied by our choice of the angle functionals $\{ \bar{\theta}_{i}\}_{i=1,2}$ and the regularity of the minimizers in Chodosh-Edelen-Li \cite{chodosh2024improved}.
\end{proof}

\section{Proof of Main Theorem}
In this section, we prove the local splitting theorem based on the existence of nontrivial minimizer of capillary functionals $\mathcal{A}_{\overline{\theta}_{1}}$ and $\mathcal{A}_{\overline{\theta}_{2}}$, and complete to prove the main rigidity statement. Assuming the nontrivial minimizer is flat, we prove that there is a dense set of slices which are all flat Euclidean slices on both sides of the minimizer. We apply this technique which first appeared in Carlotto-Chodosh-Eichmair \cite{carlotto2016effective}, whose conformal deformation of metrics studied earlier in Ehrlich \cite{ehrlich1976metric} and Liu \cite{liu20133}. The compactness argument is based on the recent regularity theory of area-minimizing hypersurfaces by Chodosh-Edelen-Li \cite{chodosh2024improved} and \cite{chodosh2025weiss}.

\subsection{Curvature estimate of area-minimizing capillary minimal hypersurfaces} We prove curvature estimates of area-minimizing capillary minimal hypersurfaces when the dimension of an ambient manifold is bounded by $4$. The argument is based on the standard rescaling argument. Recall that the rescaling limit of the area-minimizer of capillary minimal hypersurface is capillary minimal hypersurface with a constant contact angle in a half-space. We first prove the following Bernstein-type lemma when the capillary angle is constant based on the classification of minimizing cone of a capillary area functional by Chodosh-Edelen-Li (Theorem 1.2 of \cite{chodosh2024improved}).
\begin{lem} \label{capillarybernstein}
    Suppose $n+1 \le 4$ and $\theta \in (0,\pi)$. $\Sigma^{n}$ is a properly embedded capillary minimal hypersurface with a contact angle $\theta$ in $\mathbb{R}^{n+1}_{+}$ which minimizes $\mathcal{A}^{\theta}$-functional. Then $\Sigma^{n}$ is a flat half-hyperplane.  
    \begin{proof} Suppose $\Sigma^{n}$ is not a half-hyperplane and we prove by contradiction. Take $x \in \Sigma^{n} \cap \partial \mathbb{R}^{n+1}$ and consider a blow-down limit $\Sigma_{x,\infty} = \lim_{r \rightarrow \infty} \mu_{r,x} (\Sigma)$, where $\mu_{r,x}(y) = (y-x)/r$ is a scaling map by scale $1/r$ for $y \in \mathbb{R}^{n+1}_{+}$. Then by the classification of area-minimizing cone of capillary area functional by Chodosh-Edelen-Li (Theorem 1.2 of \cite{chodosh2024improved}), we have $\Sigma_{x,\infty}$ is a half-hyperplane. Now let us consider a density functional
    \begin{equation*}
        \Theta_{\theta}(x,r)(\Sigma) = \frac{|\Sigma \cap B_{r}(x)| - \cos \theta \,|\partial \mathbb{R}^{n+1}_{+} \cap \partial \Omega_{x,\infty} \cap B_{r}(x)|}{ \omega_{n-1}r^{n}/n},
    \end{equation*}
    where $\Omega_{x,\infty}$ is a fixed domain separated by $\Sigma_{x,\infty}$ in $\mathbb{R}^{n+1}_{+}$. Then by the monotonicity formula of the capillary functional by Kagaya-Tonegawa \cite{kagaya2017fixed}, $\Theta_{\theta}(x,r)$ is a monotonically increasing functional by $r$ for a fixed $x$. Since $\Theta_{\theta}(x,r)(\Sigma)=  \Theta_{\theta}(x,1)(\mu_{r,x} (\Sigma))$, the density function $d_{x}(r) :=  \Theta_{\theta}(x,1)(\mu_{r,x} (\Sigma))$ is a monotonically increasing function by $r$. Also, since the area-minimizing capillary minimal hypersurfaces have a Euclidean volume growth, there exists a density at infinity $d_{x,\infty} := \lim_{r \rightarrow \infty} d_{x}(r) < \infty$. Since $\Sigma_{x,\infty}$ is a half-hyperplane, $d_{x,\infty} = (1- \cos \theta)/2$. Also, since $d_{x,0} := \lim_{r \rightarrow 0} d_{x}(r) = (1- \cos \theta)/2$, we have $d_{x}(r) \equiv (1- \cos \theta)/2$ and this gives that $\Sigma$ is a flat half-hyperplane and this contradicts our assumption.
    \end{proof}
\end{lem}
We now prove the curvature estimate based on Lemma \ref{capillarybernstein}.
\begin{thm} \label{thm : curv estimate} Suppose $n+1 \le 4$ and $\Sigma$ is a capillary minimal hypersurface with a prescribed contact angle function $\overline{\theta}$ which minimizes a functional $\mathcal{A}_{\overline{\theta}}$ on $(M,g)$ whose boundary is mean convex. Also, assume that there exists $\epsilon>0$ such that $\overline{\theta}(x) \in (\epsilon, \pi - \epsilon)$ at point $x \in \Sigma$. Then
\begin{equation*}
    \sup_{x \in \Sigma} |A_{\Sigma}|(x) \le C(M,g),
\end{equation*}
where $C$ can be chosen only depending on $M$ and the metric $g$ and which is uniform by a compact family of smooth metrics $g$.
\begin{proof} Let us prove by contradiction and assume that there is a sequence of $\Sigma_{j}$ such that $\lambda_{j} :=  \sup |A_{\Sigma_{j}}| \rightarrow \infty$ as $j$ goes to infinity. Then there exists a sequence of points $\{ x_{j} \}$ such that $|A_{\Sigma_{j}}|(x_{j}) = \lambda_{j}$. Consider the scaled sequence of capillary minimal hypersurfaces $\mu_{1/\lambda_{j},x_{j}}(\Sigma_{j}) = \lambda_{j} (\Sigma_{j}- x_{j})$. Then we have $|A_{\mu_{1/\lambda_{j},x_{j}}(\Sigma_{j})}|(0) = 1$ for any $j$. Since $|A_{\mu_{1/\lambda_{j},x_{j}}(\Sigma_{j})}| \le 1$ everywhere, the manifold $\mu_{1/\lambda_{j},x_{j}}(M)$ smoothly and graphically converges to a limit capillary hypersurface $\Sigma_{\infty} \subseteq \mathbb{R}^{n+1}_{+}$ by elliptic PDE theory, and we have $A_{\Sigma_{\infty}} (0)=1$. Note that the contact angle $\overline{\theta}_{\infty}$ is a constant and satisfies $\overline{\theta}_{\infty} \in [\epsilon, \pi- \epsilon]$. We can apply Lemma \ref{capillarybernstein} and obtain that $\Sigma_{\infty}$ is a flat half-hypersurface and this contradicts to $A_{\Sigma_{\infty}} (0)=1$. Hence we have $\sup_{x \in \Sigma} |A_{\Sigma}|(x) \le C(M,g)$.
\end{proof}
\end{thm}
\begin{rem}
    We took a uniform angle bound $(\epsilon, \pi-\epsilon)$ of a contact angle $\overline{\theta}$ in Theorem \ref{thm : curv estimate} because we cannot guaranty Lemma \ref{capillarybernstein} when the contact angle degenerates on $\Sigma$. By our definition of contact angle functions $\bar{\theta}_{1}$ and $\bar{\theta}_{2}$, the contact angle degeneracy only occurs at the extreme points and this does not happen for the minimizers by maximum principle. Hence, Theorem \ref{thm : curv estimate} can be applied to all infinitesimally rigid solids we will construct later.
\end{rem}

\subsection{Infinitesimally rigid surfaces} We first analyze the equality cases from the local estimates in Section \ref{section: local estimates}. We generalize the rigidity analysis in dimension $3$ cases in Ko-Yao \cite{koyao}.

We start with the rigidity analysis of $\Sigma_2=\Gamma^{2}$. We call $\Gamma$ infinitesimally rigid if one component of $\Gamma$ which has a nonempty boundary satisfies the following:
 \begin{align}
       \chi(\Gamma)=1,\quad \nabla^{\Sigma}f_1=0, \quad \|A_{\Gamma}\|=0,\quad K_{\Gamma}=0 \quad \text{ on }\Gamma;\\
       \kappa_{\partial\Gamma}=\frac{H_{\partial M}}{\bar{H}_{\partial M}}\kappa_{\partial\bar{\Gamma}},\quad \quad g(\eta,N_2)=\cos\bar{\theta}_2\quad \text{ on } \partial\Gamma,
   \end{align}
   and
   \[
   \tau_1\parallel\bar{\tau}_1,\quad \tau_2\parallel\bar{\tau}_2, \quad \nu\parallel\bar{\nu}.
   \]
Also, we denote $\tau_{3} = \nu$ and $\overline{\tau}_{3} = \overline{\nu}$ for convenience of notation.
\begin{thm}[Rigidity Analysis of $\Gamma$]\label{thm: rigidity of Gamma}
   Suppose $\Gamma$ is a non-trivial minimizer of $\mathcal A_{\bar{\theta}_2}$ in $\Sigma$, and $\Sigma$ is a nontrivial minimizer of $\mathcal A_{\bar{\theta}_1}$, then $\Gamma$ is an infinitesimally rigid surface in $\Sigma$. Also, we obtain $\|A_{\Sigma}\|=0$ on $\Gamma$. Moreover, we have 
    \[
   \tau_i=\frac{H_{\partial M}}{\bar{H}_{\partial M}}\bar{\tau}_i.
   \]
   for $i=1,2$.
\begin{proof}
    By the local estimate (Corollary \ref{cor: key estmate}) for $n=2$, the winding number argument for $n=2$ (Proposition \ref{prop: winding number}) and the second variation formula (\ref{eq: bottom slice second variation}), we have
    \begin{align*}
    2\pi&\geq \int_{\Gamma}-\frac{3}{4}|\nabla^{\Gamma}\log f_1|^2-\frac{1}{2}\left(R_g+\|A_{\Sigma}\|^2+\|A_{\Gamma}\|^2+H_{\Gamma}^2\right)+K_{\Gamma}+\int_{\partial\Gamma}k_g\\
    &=\int_{\partial\Gamma}\frac{1}{\sin\bar{\theta}_1\sin\bar{\theta}_2}\left(H_{\partial M}-\partial_{\nu}\bar{\theta}_1-\sin\bar{\theta}_1\partial_{\tau_2}\bar{\theta}_2\right)\\
    &\geq 2\pi.
    \end{align*}
    By tracing equalities above, we have
    \[
    \chi(\Gamma)=1,\quad \nabla^{\Gamma}f_1=0, \quad \|A_{\Gamma}\|=0,\quad \|A_{\Sigma}\|=0,\quad R_g=0 \text{ and }H_{\Gamma}=0.
    \]
    By $H_{\Gamma}=0$ and the first variation formula (Proposition \ref{prop: second slicing variation formula}), we have $\nabla^{\Sigma} f_{1}=0$. The boundary angle condition $g(\eta,N_2)=\cos\bar{\theta}_2$ follows from the first variation formula. Moreover, we have $K_{\Gamma}=0$ since a constant function is a Jacobi vector field of equation with Robin boundary condition in the interior of $\Gamma$. By the boundary part of the equation with Robin boundary condition, we have
    \begin{align}
        \nonumber \kappa_{\partial \Gamma} &= \frac{1}{\sin\bar{\theta}_1\sin\bar{\theta}_2}\left(H_{\partial{M}}-\partial_{\nu}\bar{\theta}_1-\sin\bar{\theta}_1\partial_{\tau_2}\bar{\theta}_2\right) \\ \label{eq:boundaryrigidity}
        &= \frac{|\nu|_{g_{Eucl}}}{\sin\bar{\theta}_1\sin\bar{\theta}_2}\left(\bar{H}_{\partial{M}}-\partial_{\bar{\nu}}\bar{\theta}_1-\sin\bar{\theta}_1\partial_{\bar{\tau}_2}\bar{\theta}_2\right) \\ \nonumber &= |\nu|_{g_{Eucl}} \kappa_{\partial \bar{\Gamma}} =  \frac{H_{\partial M}}{\bar{H}_{\partial M}}\kappa_{\partial\bar{\Gamma}},
    \end{align}where (\ref{eq:boundaryrigidity}) follows from the equality case of Corollary \ref{cor: key estmate}. The last statement follows by combining arguments in Corollary \ref{cor: key estmate} and the nonzero assumption on the second fundamental form.
    \end{proof}
    \end{thm}
Analogously, we proceed to the rigidity analysis of $\Sigma$. We now suppose that the induced metric $g|_{\Sigma}$ is a Euclidean metric as an induction hypothesis, which we will prove by global isometry statement later. We prove the infinitesimal rigidity of $\Sigma$ as follows.
\begin{prop} [Rigidity Analysis of $\Sigma$] \label{prop : rigidity sigma}
    $\Sigma$ is infinitesimally rigid in the sense that
    \begin{equation*}
        \text{Ric}_{M}(N_1,N_1)=0, \quad  \|A_{\Sigma}\|=0 \quad \text{on } \Sigma,
    \end{equation*}
    and $H_{\partial\Sigma}=\frac{H_{\partial M}}{\bar{H}_{\partial M}}H_{\partial\bar{\Sigma}}$ on $\partial \Sigma$.
    \begin{proof}
        We proved $\|A_{\Sigma}\|=0$ on $\Gamma$ in Theorem \ref{thm: rigidity of Gamma} and rigidity arguments by an induction hypothesis give that $\|A_{\Sigma}\|=0$ on $\Sigma$. Moreover, we assumed the rigidity, namely the flatness of $\Sigma$ which implies $R_{\Sigma}=0$, we have
        \begin{equation*}
        R_{\Sigma} = R_{M} - 2 \text{Ric}_{M}(N_1,N_1) + \|A_{\Sigma}\|^{2}- \|H_{\Sigma}\|^{2}
        \end{equation*}
        and we obtain $\text{Ric}_{M}(N_1,N_1)=0$ from the minimality of $\Sigma$ in $M$. Since we have an isometry between $\Sigma$ and a Euclidean model, $H_{\partial\Sigma}=\frac{H_{\partial M}}{\bar{H}_{\partial M}}H_{\partial\bar{\Sigma}}$ on $\partial \Sigma$ follows.
    \end{proof}
\end{prop}    
\subsection{Global rigidity} We now prove the main rigidity theorem. The idea is to construct an infinitesimally rigid capillary hypersurface in a conformally deformed metric of $g$ near the infinitesimal rigid hypersurface $\Sigma$. This gives a dense set of Euclidean capillary slices on both sides of $\Sigma$. We adapt the technique of \cite{carlotto2016effective} and \cite{liu20133} for this construction up to dimension $4$, where they developed the technique in dimension $3$ which can be extended directly (See \cite{li2024dihedral} for high dimensional free boundary setting). We take the construction of conformally deformed metric from Appendix \ref{appendixconf}.
\begin{proof}[Proof of Theorem \ref{main rigidity thm}] We prove the statement by induction, namely we prove the rigidity of $\Gamma$ in $\Sigma$ and those of $\Sigma$ in $M$ with the same idea, while the latter statement will be proven under the hypothesis of the former one. Since $\Gamma$ is a capillary minimal surface with a contact angle $\bar{\theta}_{2}$ in $\Sigma$ with the metric $g' = f_{1} g$, we prove at once by unifying the notation by $\Sigma^{n-1} \subseteq M^{n}$, the contact angle by $\bar{\theta}$, and the extreme point by $p_{+}$ without loss of generality, where $n=3,4$. Denote the region of $M$ separated by $\Sigma$ which contains $p_{+}$ by $M_{+}$. Denote the unit normal vector field on $\Sigma$ by $N$.

Now let us take $p \in M_{+}$ and take the metric $g(t)$ by applying Proposition \ref{prop : deformedmetric}. Then $(M_{+}, g(t))$ is a mean convex solid whose wedge part is on $\partial \Sigma$. Let us take $\mathcal{E}$ as a class of open sets with finite perimeter containing $p_{+}$. We consider a minimization problem on $(M_{+}, g(t))$ as follows: 
\begin{align} \label{deformed minimization}
    \mathcal I_{t}:=\inf\{\mathcal A_{\bar{\theta}}(U): U\in\mathcal E\}.
\end{align}
We denote a minimizer of (\ref{deformed minimization}) by $U_{t}$. Then $\Sigma_{t}:=\partial_{rel}U_{t}$ is achieved with a capillary minimal hypersurface which is disjoint with $\Sigma$ by maximum principle and the convexity condition (5) of Proposition \ref{prop : deformedmetric}. Also by the construction, $\Sigma_{t}$ must intersect $B_{3r_{0}}(p)$ by the metric comparison, otherwise it contradicts to the fact that $\Sigma$ is a minimizer of $\mathcal{A}_{\bar{\theta}}$ in $(M,g)$. If $\Sigma_t$ does not intersect $B_{3r_0}(p)$, then we have 
\[
\mathcal A^g_{\bar{\theta}}(\Sigma)\leq \mathcal A_{\bar{\theta}}^g(\Sigma_t)=\mathcal A_{\bar{\theta}}^{g(t)}(\Sigma_t)< \mathcal A_{\bar{\theta}}^{g(t)}(\Sigma)\leq \mathcal A_{\bar{\theta}}^g(\Sigma),
\]
where we used the fact that $\Sigma$ is strictly mean convex at one interior point with respect to the metric $g(t)$. Suppose $\Sigma_{t}$ does not intersect $B_{r_0}(p)$. Then $R_{g(t)} \ge 0$ along $\Sigma_{t}$ and there exists a point $q \in \Sigma_{t} \cap (B_{3r_0}(p)\setminus B_{r_0}(p))$ with $R_{g(t)}(q) > 0$ by the condition (4) in Proposition \ref{prop : deformedmetric}. This contradicts the rigidity in Theorem \ref{thm: rigidity of Gamma} and Proposition \ref{prop : rigidity sigma}, which gives $R_{g(t)} \equiv 0$ on $\Sigma_{t}$. Hence, $\Sigma_{t}$ intersects $B_{r_0}(p)$.

For small $\epsilon>0$, there exists a family of capillary hypersurfaces $\{ \Sigma_{t} \}_{t \in (0,\epsilon)}$. By Theorem \ref{thm : curv estimate}, $\Sigma_{t}$ smoothly converges to area-minimizing capillary minimal hypersurface $\Sigma'$ in $(M_{+},g)$ which is disjoint to $\Sigma$ since $\Sigma'$ intersects $B_{r_{0}}(p)$. Note that $\Sigma'$ is an infinitesimally rigid hypersurfaces in $(M,g)$ by the rigidity theorems.

With different choices of small $r_{0}$ and $p$, we can repeat the previous procedure to obtain $\Sigma'$ and let $\Sigma^{\rho}$ be a corresponding infinitesimally rigid hypersurface with $\rho = r_{0}$ in $(M,g)$. Let us take $\mathcal{C}$ by a dense subset of $(0,\epsilon)$ and consider $\{ \Sigma^{\rho} \}_{\rho \in \mathcal{C}}$. Then $\cup_{\rho \in \mathcal{C}} \Sigma^{\rho}$ is dense near $\Sigma$ and each surface $\Sigma^{\rho}$ is an infinitesimally rigid area minimizing capillary hypersurface. Hence, we can assume that there exists $\Sigma^{\rho}$ such that $\Sigma^{\rho}$ intersects $B_{\epsilon}(q)$ for any $q \in \partial \Sigma$ and $\epsilon>0$. By the curvature estimate (Theorem \ref{thm : curv estimate}), there exists uniform constant $C$ such that $\partial \Sigma^{\rho}$ intersects $B_{C \epsilon}(q)$ which only depends on $M$ and $g$. This gives a boundary denseness of $\{ \Sigma^{\rho} \}_{\rho \in \mathcal{C}}$ on a small neighborhood of $\Sigma$ on $M_{+}$.

Let us take a vector field $X$ on $\Sigma$ such that $\langle X, N \rangle =1$ on $\Sigma$ and tangential to $\partial M$ on $\partial \Sigma$ and extend to $M$ and denote this vector field by $Y$. We take a one-parameter family of diffeomorphism $\phi_{t}$ on $M$ which is generated by $Y$. For $\rho \in \mathcal{C}$, each $\Sigma^{\rho}$ can be represented as a graph by
\begin{equation*}
    \{ \phi_{u^{\rho}(x)} (x) \, : \, x \in \Sigma\}.
\end{equation*}
We fix a point $x_{0} \in \Sigma$ and define $d_{\rho} = u^{\rho}(x)/ u^{\rho}(x_{0})$. By arguing as in \cite{liu20133}, which computes the first variation of the second fundamental form of $\Sigma_{\rho}$, and by taking the limit $f$ of $d_{\rho}$ as $\rho \rightarrow 0$, we obtain
\begin{equation} \label{weingarten}
    \nabla_{\Sigma}^{2}f(Z,W) + \text{Rm}_{M}(N,Z,W,N)= 0 
\end{equation}
for all tangential vectors $Z, W$ on $\Sigma$. By tracing (\ref{weingarten}), we have $\Delta_{\Sigma} f=0$ since $\text{Ric}_{M}(N,N)=0$ by infinitesimal rigidity. Moreover, since $f$ is induced by Jacobi field on $\Sigma$, we have a boundary condition as follows by applying $|A_{\Sigma}| \equiv 0$ from infinitesimal rigidity:
\begin{equation} \label{Robinboundary}
    \partial_{\eta} f = \left( \frac{1}{\sin \overline{\theta}_1} A_{\partial M} (\nu,\nu) + \frac{1}{\sin^{2} \overline{\theta}_1} \partial_{\nu} \cos \overline{\theta}_1 \right)f.
\end{equation}
Moreover, $\partial_{\eta} f = 0$ follows from the rigidity conditions of the stability inequalities in Proposition \ref{prop: first slicing variation}, \ref{prop: second slicing variation formula}, and Lemma \ref{prop: Angle Identities}. By Green's formula, we obtain that $u$ is a constant and $\text{Rm}_{M}(N,Z,W,N)=0$. By Codazzi equation, we obtain $\text{Rm}_{M} \equiv 0$ on $\Sigma$. Since $\{ \Sigma^{\rho} \}_{\rho \in \mathcal{C}}$ is a dense subset in a small neighborhood of $\Sigma$ together with a boundary curvature condition of Theorem \ref{thm: rigidity of Gamma} and Proposition \ref{prop : rigidity sigma}, there exists a small neighborhood of $\Sigma$ that is isometric to Euclidean domain (up to rescaling). We can iterate this procedure along all $M_{+}$ and obtain global isometry by applying these arguments to $M \setminus M_{+}$.
\end{proof}
\begin{proof}[Proof of Theorem \ref{main rigidity in general dimension}] By assuming the regularity of capillary area functional in higher dimension, the curvature estimate in Section 5.1 follows. The infinitesimal rigidity follows from our estimates in Section 2 and 3. The global rigidity with the construction of a dense set of Euclidean capillary slices do not depend on the dimension as far as the curvature estimate holds (See \cite{li2024dihedral} also), Theorem \ref{main rigidity in general dimension} holds with the assumptions in the statement.
\end{proof}

\appendix \section{Conformally deformed metric near infinitesimally rigid hypersurfaces} \label{appendixconf}
The following proposition follows from an extension of Carlotto-Chodosh-Eichmair \cite{carlotto2016effective}, see also Li \cite{li2024dihedral} for adaption in the high dimensional free boundary setting. We include the construction for the sake of completeness here.

\begin{prop}[Liu \cite{liu20133}, Carlotto-Chodosh-Eichmair \cite{carlotto2016effective}, Li \cite{li2024dihedral}] \label{prop : deformedmetric}
    Suppose $\Sigma^{n-1}$ is an infinitesimally rigid hypersurfaces in $M^{n}$. Suppose $r_0>0$ is a fixed small enough positive parameter and $p\in \Omega$ is a fixed interior point such that $\text{dist}_g(p,\Sigma)\in (1.5r_0,2.5r_0)$,$\text{dist}_g(p,\partial M)>4r_0$ and $\frac{\partial r}{\partial N_1}>0$ in $\Sigma\cap B_{3r_0}(p)$.  Then there exists an $\epsilon>0$ and a family of Riemannian metrics $\{g(t)\}_{t\in(0,\epsilon)}$ in the conformal class of $g$ with the following properties:
    \begin{enumerate}
        \item $g(t)\to g$ smoothly as $t$ goes to $0$;
        \item $g(t)=g$ on $M\setminus B_{3r_0}(p)$;
        \item $g(t)\leq g$ as metrics on $M$, and the inequality is strict on the annuli $B_{3r_0}(p)\setminus B_{r_0}(p)$;
        \item $R_{g(t)}>0$ on $B_{3r_0}(p)\setminus B_{r_0}(p)$;
        \item $\Sigma$ is weakly mean convex and strictly mean convex at one interior point with respects to the metric $g(t)$ and the unit normal vector field pointing outward from $\Omega$.
    \end{enumerate}
\end{prop}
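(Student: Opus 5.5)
The construction follows Carlotto--Chodosh--Eichmair: a conformal deformation $g(t)=u_t^{4/(n-2)}g$, with $u_t = 1 - t\,\psi$. Here $\psi\ge 0$ is a smooth bump supported in $B_{3r_0}(p)$, built from the distance function $r=\text{dist}_g(\cdot,p)$. We take $\psi=\chi(r)$ with $\chi$ a nonnegative smooth profile chosen as follows. First, $\chi\equiv 0$ for $r\ge 3r_0$. Second, $\chi$ is positive on $r<3r_0$. Third, on the annulus $r_0< r< 3r_0$ we take $\chi$ of the form $e^{-1/(3r_0-r)}$, or an exponential barrier like $\exp(-\lambda r^2)-\exp(-9\lambda r_0^2)$ with $\lambda$ large. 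This makes $\psi$ strictly superharmonic there, $\Delta_g\psi<0$, which is possible because $r_0$ is small and $g$ is nearly Euclidean on $B_{3r_0}(p)$. With this choice, properties (1), (2) and (3) are immediate: $u_t\to 1$ smoothly, $u_t=1$ off $B_{3r_0}(p)$, and $u_t<1$ wherever $\psi>0$, in particular strictly on the annulus. For small $t$ we have $u_t>0$.

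For (4) I would use the conformal scalar curvature formula
\[
R_{g(t)}=u_t^{-\frac{n+2}{n-2}}\Big(-\tfrac{4(n-1)}{n-2}\Delta_g u_t+R_g u_t\Big)=u_t^{-\frac{n+2}{n-2}}\Big(\tfrac{4(n-1)}{n-2}\,t\,\Delta_g\psi+R_g(1-t\psi)\Big).
\]
This sign is the wrong way. So I would instead take $u_t=1+t\psi$ for the scalar curvature and handle the metric inequality separately. The cleaner choice, matching Carlotto--Chodosh--Eichmair, is a two-factor form: choose $\psi$ \emph{subharmonic}, $\Delta_g\psi>0$, on the annulus, yet with $u_t=1-t\psi\le 1$. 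That requires $\psi$ increasing toward the outer edge in a convex way before cutting off. This is exactly why one uses an annulus $B_{3r_0}\setminus B_{r_0}$ rather than a ball. On $B_{r_0}(p)$ we allow $R_{g(t)}$ to be negative, and this is harmless for the later argument. Concretely, take $\psi(r)=e^{-\lambda/(3r_0-r)}$ for $r_0\le r<3r_0$ and extend it smoothly to a constant-type profile inside $B_{r_0}$. Since $\Delta\psi=\psi''+\frac{n-1}{r}\psi'+O(r)\psi'$, and since $\psi''$ dominates $|\psi'|/r$ for $\lambda$ large near $r=3r_0$, one gets $-\Delta\psi<0$. Combined with $R_g\ge 0$, this yields $R_{g(t)}>0$ on the annulus for small $t$. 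I expect this profile bookkeeping, namely the sign of $\psi''$ versus the first-order term across the whole annulus including near $r=r_0$, to be the main technical obstacle. If the term $\frac{n-1}{r}\psi'$ is a problem, I would first try a profile in $\log$-distance or a radial function of $r^{2-n}$, whose Laplacian sign is explicit.

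For (5), note that $\Sigma$ is minimal ($\|A_\Sigma\|=0$) for $g$. Under a conformal change the mean curvature transforms as $H_{g(t)}=u_t^{-2/(n-2)}\big(H_g+\tfrac{2(n-1)}{n-2}\partial_{N}\log u_t\big)$. Hence $H_{g(t)}$ has the sign of $-t\,\partial_N\psi=-t\chi'(r)\,\partial_N r$. Using the hypothesis $\partial r/\partial N_1>0$ on $\Sigma\cap B_{3r_0}(p)$ and $\chi'<0$ on the annulus, we get $H_{g(t)}\ge 0$ everywhere on $\Sigma$, with strict inequality on the nonempty set $\Sigma\cap(B_{3r_0}\setminus B_{r_0})$. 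This set is nonempty since $\text{dist}(p,\Sigma)<2.5r_0$. The sign convention of the normal pointing out of $\Omega$ then gives (5). The hypothesis $\text{dist}(p,\partial M)>4r_0$ ensures the deformation does not touch $\partial M$, so the boundary mean curvature and the contact-angle data along $\partial\Sigma$ are unchanged.
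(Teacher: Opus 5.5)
Your final construction is the paper's. With $v=-\psi$, your $u_t=1-t\psi$ with $\psi=e^{-\lambda/(3r_0-r)}$ on the annulus is, up to normalization, the paper's $1+tv$ with $v=r_0^2f(r/r_0)$ (whose exponent must be read as $\frac{16n+8}{s-3}$ for $f$ to vanish at $s=3$), and (3)--(5) follow from the same facts ($v\le 0$, $\Delta_g v<0$ on the annulus, $\partial_{N_1}v=r_0f'(r/r_0)\,\partial_{N_1}r>0$) via the same conformal formulas.

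The step you flagged is routine: $\psi''/|\psi'|=(\lambda-2(3r_0-r))/(3r_0-r)^2\ge(\lambda-4r_0)/(4r_0^2)$ exceeds $\Delta_g r<n/r_0$ on the whole annulus once $\lambda\ge 4(n+1)r_0$; the paper encodes this via $\Delta_g\,\mathrm{dist}_g^2(\cdot,p)<4n$. Note also that $\psi$ must decrease, not increase, toward $r=3r_0$, as your formula and your use of $\chi'<0$ in (5) already have it.
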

\begin{proof}
    Let $r_0$ be small enough such that on any geodesic ball $B_{3r_0}(q)$, the inequality $\Delta_g(\text{dist}_g^2(\cdot,q))<4n$ holds. The existence of $r_0$ follows from the compactness of $M$. Let us take a smooth function $f: \mathbb{R} \rightarrow \mathbb{R}$ whose support is in $[0,3]$ and satisfies
    \begin{equation*}
    f(s) = -\text{exp}\left( \frac{16n+8}{s}\right)
    \end{equation*}
    for $s \in (1,3)$. Note that $f$ satisfies $f'(s) >0$ and $(4n-1) f'(s) + sf''(s) <0$ in $(1,3)$. We consider a function $v : M \rightarrow \mathbb{R}$ given by
    \begin{equation*}
        v(x) = r_{0}^{2} f \left( \frac{\text{dist}_{g}(x,p)}{r_{0}}\right).
    \end{equation*}
    In $B_{3r_{0}}(p) \setminus B_{r_{0}}(p)$, we have
    \begin{equation*}
        \Delta_{g}v = r_{0} f'\left( \frac{r}{r_{0}} \right) \Delta_{g}r + f''\left( \frac{r}{r_{0}} \right) < (4n-1) \frac{r_{0}}{r} f'\left( \frac{r}{r_{0}} \right)  + f''\left( \frac{r}{r_{0}} \right)<0.
    \end{equation*}

    Now we take $g(t) = (1+tv)^{\frac{4}{n-2}} g$. The conditions (1)-(3) are satisfied directly and we now check the conditions non-negativeness of scalar curvature (4) and the preservation of weakly convex boundary (5). We have
    \begin{equation*}
        R(g(t)) = (1+tv)^{\frac{n+2}{n-2}} \left( \frac{-4(n-1)t}{n-2} \Delta_{g} v + R(g)(1+tv) \right)>0
    \end{equation*}
    for positive $t$ and we verify (4) in $B_{3r_0}(p)\setminus B_{r_0}(p)$.
    Similarly, by the mean curvature formula by conformal change, along $\Sigma$ we have
    \begin{align*}
        H_{\Sigma,g(t)}=(1+tv)^{-\frac{n}{n-2}}\left(H_{\Sigma,g}+\frac{2(n-1)t}{n-2}\frac{\partial v}{\partial N_1}\right)>0,
    \end{align*}
    the condition (5) is verified. 
\end{proof}

\bibliographystyle{plain}
\bibliography{refenrence}

\end{document}